\newtheorem{theorem}{Theorem}[section]
\newtheorem{lemma}[theorem]{Lemma}
\newtheorem{proposition}[theorem]{Proposition}
\theoremstyle{definition}
\theoremstyle{remark}
\newtheorem{remark}[theorem]{Remark}
\numberwithin{equation}{section}
\numberwithin{equation}{section}
\begin{document}

\title[The geometric mean error for in-homogeneous self-similar measures]{Asymptotics of the geometric mean error for in-homogeneous self-similar measures}
\author{Sanguo Zhu, Youming Zhou, Yongjian Sheng}
\address{School of Mathematics and Physics, Jiangsu University of Technology,
Changzhou 213001, China.} \email{sgzhu@jsut.edu.cn}
\thanks{S. Zhu is supported by CSC (No. 201308320049), to whom any correspondence should be addressed.}
\subjclass[2000]{Primary 28A80, 28A78; Secondary 94A15}
\keywords{geometric mean error, in-homogeneous self-similar measures, convergence order.}

\begin{abstract}
Let $(f_i)_{i=1}^N$ be a family of contractive similitudes on $\mathbb{R}^q$ satisfying the open set condition. Let $(p_i)_{i=0}^N$ be a probability vector with $p_i>0$ for all $i=0,1,\ldots,N$. We study the asymptotic geometric mean errors $e_{n,0}(\mu),n\geq 1$, in the quantization for the in-homogeneous self-similar measure $\mu$ associated with the condensation system $((f_i)_{i=1}^N,(p_i)_{i=0}^N,\nu)$.
We focus on the following two independent cases: (I) $\nu$ is a self-similar measure on $\mathbb{R}^q$ associated with $(f_i)_{i=1}^N$; (II) $\nu$ is a self-similar measure associated with another family of contractive similitudes $(g_i)_{i=1}^M$ on $\mathbb{R}^q$ satisfying the open set condition and
$((f_i)_{i=1}^N,(p_i)_{i=0}^N,\nu)$ satisfies a version of in-homogeneous open set condition. We show that, in both cases, the quantization dimension $D_0(\mu)$ of $\mu$ of order zero exists and agrees with that of $\nu$, which is independent of the probability vector $(p_i)_{i=0}^N$. We determine the convergence order of $(e_{n,0}(\mu))_{n=1}^\infty$; namely, for $D_0(\mu)=:d_0$, there exists a constant $D>0$, such that
\[
D^{-1}n^{-\frac{1}{d_0}}\leq e_{n,0}(\mu)\leq D n^{-\frac{1}{d_0}}, n\geq 1.
\]
\end{abstract}
\maketitle

\section{Introduction}
Let $\mathcal{D}_n:=\{\alpha\subset\mathbb{R}^q:1\leq{\rm card}(\alpha)\leq n\}$ for $n\geq 1$. Let $\nu$  be a Borel probability measure on $\mathbb{R}^q$. The $n$th quantization error  for $\nu$ of order $r$ is defined by (see \cite{GL:00,GL:04}):
\begin{eqnarray}\label{quanerrordef}
e_{n,r}(\nu):=\left\{\begin{array}{ll}\inf_{\alpha\in\mathcal{D}_n}\big(\int d(x,\alpha)^{r}d\nu(x)\big)^{\frac{1}{r}}
,\;\;\;\;\;\;r>0,\\
\inf_{\alpha\in\mathcal{D}_n}\exp\int\log d(x,\alpha)d\nu(x),\;\;\;r=0.\end{array}\right.
\end{eqnarray}
Here $d(\cdot,\cdot)$ is the metric induced by an arbitrary norm on $\mathbb{R}^q$.
For $r>0$, $e_{n,r}(\nu)$ agrees with the error in the approximation of $\nu$ by discrete probability measures supported on at most $n$ points,
in the sense of $L_r$-metrics \cite{GL:00}.

The $n$th quantization error of order zero was introduced by Graf and Luschgy and it is also called the $n$th geometric mean
error for $\nu$. By \cite{GL:04}, $e_{n,0}(\nu)$ agrees with the limit of $e_{n,r}(\nu)$ as $r$ tends to zero. In this sense, the quantization with respect to the geometric mean error is a limiting case of that in $L_r$-metrics.

For $s>0$, we define the $s$-dimensional upper and lower quantization coefficient for $\nu$ of order $r$ by (cf. \cite{GL:00,PK:01})
\begin{eqnarray*}
\overline{Q}_r^s(\nu):=\limsup_{n\to\infty}n^{\frac{1}{s}}e_{n,r}(\nu),\;\;
\underline{Q}_r^s(\nu):=\liminf_{n\to\infty}n^{\frac{1}{s}}e_{n,r}(\nu).
\end{eqnarray*}
By \cite{GL:00,PK:01}, the upper (lower) quantization dimension $\overline{D}_r(\nu)$ ($\underline{D}_r(\nu)$) as defined below is exactly the critical point at which the upper (lower) quantization coefficient jumps from zero to infinity:
\begin{eqnarray*}
\overline{D}_r(\nu):=\limsup_{n\to\infty}\frac{\log n}{-\log e_{n,r}(\nu)},\;\underline{D}_r(\nu):=\liminf_{n\to\infty}\frac{\log n}{-\log e_{n,r}(\nu)}.
\end{eqnarray*}
If $\overline{D}_r(\nu)=\underline{D}_r(\nu)$, the common value is denoted by $D_r(\nu)$ and called the quantization dimension for $\nu$. Compared with he upper (lower) quantization, the upper (lower) quantization coefficient provides us with more accurate information on the asymptotics of the geometric mean errors.

The upper and lower quantization dimension of order zero are closely connected with the upper and lower local dimension \cite{Fal:97}:
\[
\underline{\dim}_{\rm loc}\nu(x):=\liminf_{\epsilon\to 0}\frac{\log\nu(B_\epsilon(x))}{\log\epsilon},\;\overline{\dim}_{\rm loc}\nu(x):=\limsup_{\epsilon\to 0}\frac{\log\nu(B_\epsilon(x))}{\log\epsilon}.
\]
Here $B_\epsilon(x)$ denotes the closed ball of radius $\epsilon$ which is centered at a point $x\in\mathbb{R}^q$. In fact, as we showed in \cite{zhu:12}, if the upper and lower local dimension are both equal to $s$ for $\nu$-a.e. $x$, then $D_0(\nu)$ exists and equals $s$. Thus, the geometric mean error $e_{n,0}(\nu)$ connects the local and global behavior of $\nu$ in a natural manner.

Next, let us recall some known results. Let $(f_i)_{i=1}^N$ be a family of contractive similitudes on $\mathbb{R}^q$ with contraction ratios $(s_i)_{i=1}^N$. By \cite{Hut:81}, there exists a unique Borel probability measure $\nu$ satisfying
\[
\nu=q_1\nu\circ f_1^{-1}+q_2\nu\circ f_2^{-1}+\cdots+q_N\nu\circ f_N^{-1}.
\]
This measure is called the self-similar measure associated with $(f_i)_{i=1}^N$ and a probability vector $(q_i)_{i=1}^N$.
We say that $(f_i)_{i=1}^N$ satisfies the open set condition (OSC), if there exists a non-empty bounded open set $U$ such that
$f_i(U)\cap f_j(U)=\emptyset$ for all $1\leq i\neq j\leq N$ and $\bigcup_{i=1}^Nf_i(U)\subset U$.
Let $k_r$ be given by
\[
k_0:=\frac{\sum_{i=1}^Nq_i\log q_i}{\sum_{i=1}^Nq_i\log s_i};\;\;\sum_{i=1}^N(q_is_i^r)^{\frac{k_r}{k_r+r}}=1,\;r>0.
\]
Assume that $(f_i)_{i=1}^N$ satisfies the OSC. Then, Graf and Luschgy proved \cite{GL:01,GL:04}
\begin{eqnarray}\label{gl}
D_r(P)=k_r,\;\;0<\underline{Q}_r^{k_r}(\mu)\leq\overline{Q}_r^{k_r}(\mu)<\infty,\;r\geq 0.
\end{eqnarray}
One may see \cite{GL:00,Kr:08,PK:01} for more related results.

In the present paper, we study the asymptotic geometric mean error for in-homogeneous self-similar measures. We refer to \cite{GL:00,GL:04} for mathematical foundations of quantization theory and \cite{GN:98} for its background in engineering technology. One may see \cite{Las:06,Olsen:08} for recent results on such measures. As above, let $(f_i)_{i=1}^N$ be a family of contractive similitudes.  According to \cite{Hut:81}, there exists a unique non-empty compact set $E$ such that
\begin{eqnarray}\label{s14}
E=f_1(E)\cup\cdots\cup f_N(E).
\end{eqnarray}
This set $E$ is called the self-similar set associated with $(f_i)_{i=1}^N$.
Let $\nu$ be a Borel probability measure on $\mathbb{R}^q$ with compact support  $C$ and $(p_i)_{i=0}^N$ a probability vector with $p_i>0$ for all $0\leq i\leq N$. Then, by \cite{Bar:88,Las:06,Olsen:08}, there exists a unique a Borel probability measure $\mu$ satisfying
\begin{eqnarray}\label{s55}
\mu=p_0\nu+\sum_{i=1}^Np_i\mu\circ f_i^{-1}.
\end{eqnarray}
The support $K$ of $\mu$ is the unique nonempty compact set satisfying (cf. \cite{Las:06,Olsen:08})
\begin{eqnarray}\label{s15}
K=C\cup f_1(K)\cup\cdots\cup f_N(K).
\end{eqnarray}
Following \cite{Olsen:08}, we call $\mu$ the in-homogeneous self-similar measure (ISM) associated with $(f_i)_{i=1}^N,(p_i)_{i=0}^N$ and $\nu$. We focus on the following two disjoint classes of ISMs.

\textbf{Case I}: Assume that $(f_i)_{i=1}^N$ satisfies the OSC; the measure $\nu$ as involved in (\ref{s55}) is a self-similar measure associated with $(f_i)_{i=1}^N$ and a probability vector $(t_i)_{i=1}^N$ with $t_i>0$  for all $0\leq i\leq N$. Note that $C={\rm supp}(\nu)=E$; by (\ref{s14}) and (\ref{s15}), one easily sees that $K=E$. In this case, the support of $\mu$ is a self-similar set; however, its mass distribution is more convoluted than the following Case II (cf. Lemma \ref{g11}).

\textbf{Case II}: The measure $\nu$ as involved in (\ref{s55}) is a self-similar measure associated with $(g_i)_{i=1}^M$ and a probability vector $(t_i)_{i=1}^M$ with $t_i>0$  for all $0\leq i\leq M$, where $(g_i)_{i=1}^M$ is a family of contractive similitudes satisfying the OSC with contraction ratios $(c_i)_{i=1}^M$. Let ${\rm cl}(A),\partial(A)$ and ${\rm int}(A)$ respectively denote the closure, boundary and interior in $\mathbb{R}^q$ of a set $A$. In this case, we always assume the following in-homogeneous open set condition (IOSC) which is a modified version of the IOSC in \cite{Olsen:08}: there exists a bounded non-empty open set $U$ such that

(1) $f_i(U)\subset U$ for all $1\leq i\leq N$;

(2) $f_i(U),1\leq i\leq N$, are pairwise disjoint;

(3) $E\cap U\neq\emptyset$ and $C\subset U$;

(4) $\nu(\partial(U))=0$; $C\cap f_i({\rm cl}(U))=\emptyset$ for all $1\leq i\leq N$.

\begin{remark}
(r1) Compared with the ISMs in Case I, the mass distribution of $\mu$ in Case II is simpler, but its support is much more complicated (cf. Lemma \ref{g112}). In addition, in Case I, we have $C=K=E$, thus, the second part of (4) of the IOSC is violated in an extreme manner.

(r2) In Cases I, II, $(f_i)_{i=1}^N$ satisfies the OSC; in Case II, $(g_i)_{i=1}^M$ satisfies the OSC. Thus, by \cite{GL:01}, in both cases, $D_0(\nu)$ exists and (\ref{gl}) is applicable.

(r3) As we will see, no confusion could arise, although we respectively denote by $(t_i)_{i=1}^N$ and $(t_i)_{i=1}^M$ the probability vectors in Case I and Case II.
\end{remark}
In order to study the asymptotic geometric mean errors for $\mu$, we usually need to consider finite maximal antichains (see section 2 for the definition) of the following form (cf. \cite[p.708]{GL:04}):
\begin{eqnarray*}
\{\sigma\in\Omega^*:\mu(E_{\sigma^-})\geq\epsilon>\mu(E_\sigma)\},\;\epsilon>0.
\end{eqnarray*}
However, for an ISM  in Case I, the mass distribution is rather convoluted and will be very difficult to analyze after taking logarithms. We will choose a suitable sequence of finite maximal antichains according to the mass distribution of $\nu$. Unlike the $L_r$-quantization for $r>0$, where, the quantization coefficient of order $r$ can be infinite, we will prove that the upper quantization coefficient for $\mu$ of order zero is always finite. More exactly,
\begin{theorem}\label{mthm}
Let $\mu$ be an ISM in Case I or Case II. Set
\[
d_0=\frac{\sum_{i=1}^Nt_i\log t_i}{\sum_{i=1}^Nt_i\log s_i}\;\;{\rm for\;Case\; I};\;\;d_0=\frac{\sum_{i=1}^Mt_i\log t_i}{\sum_{i=1}^Mt_i\log c_i}\;\;{\rm for\;Case\; II}.
\]
We have, $0<\underline{Q}_0^{d_0}(\mu)\leq\overline{Q}_0^{d_0}(\mu)<\infty$. In particular, $D_0(\mu)$ exists and equals $d_0$.
\end{theorem}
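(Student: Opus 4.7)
The plan is to derive matching bounds $D^{-1}n^{-1/d_0} \leq e_{n,0}(\mu) \leq Dn^{-1/d_0}$ via a self-referential argument driven by the fixed-point equation~(\ref{s55}) and the Graf--Luschgy asymptotics~(\ref{gl}) for the self-similar measure $\nu$, which is available in both cases by remark r2. The key identity is the scaling relation $e_{n,0}(\mu \circ f_i^{-1}) = s_i\, e_{n,0}(\mu)$, a direct consequence of each $f_i$ being a similitude of ratio $s_i$.

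For the lower bound, I would take any codebook $\alpha$ with $|\alpha|=n$ and decompose $\int \log d(x,\alpha)\,d\mu$ along~(\ref{s55}). Applying $\int \log d(x,\alpha)\,d\sigma \geq \log e_{n,0}(\sigma)$ to each component $\sigma\in\{\nu,\, \mu\circ f_i^{-1}\}$ and invoking the scaling identity yields
\[
\int \log d(x,\alpha)\,d\mu \geq p_0 \log e_{n,0}(\nu) + \sum_{i=1}^N p_i\bigl[\log s_i + \log e_{n,0}(\mu)\bigr].
\]
Taking the infimum over $\alpha$ on the left and solving for $\log e_{n,0}(\mu)$,
\[
\log e_{n,0}(\mu) \geq \log e_{n,0}(\nu) + \frac{1}{p_0}\sum_{i=1}^N p_i \log s_i,
\]
and combining with $e_{n,0}(\nu) \geq c_\nu n^{-1/d_0}$ from~(\ref{gl}) gives $\underline{Q}_0^{d_0}(\mu)>0$.

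For the upper bound, choose positive integers with $n_\nu + \sum n_i \leq n$, take near-optimal codebooks $\alpha_\nu$ for $\nu$ (size $n_\nu$) and $\alpha_i$ for $\mu$ (size $n_i$), and form $\alpha = \alpha_\nu \cup \bigcup_i f_i(\alpha_i)$. The pointwise inequalities $d(x,\alpha)\leq d(x,\alpha_\nu)$ and $d(f_i(y),\alpha)\leq s_i\, d(y,\alpha_i)$ give
\[
\log e_{n,0}(\mu) \leq p_0 \log e_{n_\nu,0}(\nu) + \sum_{i=1}^N p_i\bigl[\log s_i + \log e_{n_i,0}(\mu)\bigr].
\]
I would then establish $\log e_{n,0}(\mu) \leq A - d_0^{-1}\log n$ for all $n\geq 1$ by strong induction. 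With the Lagrange-optimal allocation $n_\nu \approx np_0$, $n_i \approx np_i$ (which maximizes $p_0 \log n_\nu + \sum p_i \log n_i$ under the constraint $\sum = n$), Graf--Luschgy's $e_{n_\nu,0}(\nu) \leq C_1 n_\nu^{-1/d_0}$, and the inductive hypothesis, the inequality closes provided
\[
A \geq \log C_1 + p_0^{-1}\sum_{i=1}^N p_i\log s_i + H(p)/(p_0\, d_0),
\]
where $H(p) := -p_0\log p_0 - \sum_{i=1}^N p_i\log p_i$ is the Shannon entropy. The base case for small $n$ (where some $n_i<1$) uses the trivial bound $e_{n,0}(\mu)\leq \mathrm{diam}(K)$, also absorbed into $A$.

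The delicate step, and essentially the only nontrivial point, is the bookkeeping in the upper-bound induction: one must verify that $A$ absorbs the entropic remainder $H(p)/d_0$ introduced by the proportional allocation. Both Case~I and Case~II are handled uniformly, since the argument invokes only~(\ref{s55}), the similitude property of the $f_i$, and the Graf--Luschgy asymptotics for $\nu$; whether $\nu$ is associated to $(f_i, t_i)$ or to $(g_j, t_j)$ is immaterial, and neither the OSC on $(g_j)$ nor the IOSC enters beyond supplying~(\ref{gl}) for $\nu$. An analogous argument for $r>0$ would require $1 - \sum_i p_i s_i^r > 0$, which is the standard obstruction that makes $\overline{Q}_r^{k_r}(\mu)$ possibly infinite for $r>0$ while $\overline{Q}_0^{d_0}(\mu)$ remains finite --- the very phenomenon the authors highlight before the theorem.
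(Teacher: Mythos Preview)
Your lower-bound argument is exactly the paper's: Lemma~\ref{g2} carries out precisely the decomposition you describe, arriving at $\hat{e}_n(\mu)\geq\hat{e}_n(\nu)+p_0^{-1}\sum_i p_i\log s_i$ and hence $\underline{Q}_0^{d_0}(\mu)>0$.

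Your upper-bound argument, however, is genuinely different from the paper's and considerably shorter. The paper writes down the same recursive inequality you use (Remark~3.2, equation~(\ref{s31})), but then only attempts to pass to the $\limsup$, obtaining
\[
\overline{Q}_0^{d_0}(\mu)\leq e^\kappa\,\bigl(\overline{Q}_0^{d_0}(\nu)\bigr)^{p_0}\bigl(\overline{Q}_0^{d_0}(\mu)\bigr)^{1-p_0},
\]
which is vacuous if $\overline{Q}_0^{d_0}(\mu)=\infty$ a priori; the authors then abandon the recursive route and instead build, separately in each case, carefully chosen sequences of finite maximal antichains ($\Lambda_j$ in Section~4, $\Gamma_j$ and $\Gamma_j(\sigma)$ in Section~5), prove quantitative convergence rates $|\eta_j-d_0|\leq C_4 j^{-1}$ and $|\xi_j-d_0|\leq C_8 j^{-1}$ via a chain of lemmas on the mass distribution, and conclude through Lemma~\ref{interim}. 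Your strong induction circumvents the $\limsup$ obstruction entirely: because the coefficient on the self-referential term is $\sum_{i=1}^N p_i = 1-p_0<1$, the difference between the two sides of the induction inequality is $p_0 A$ minus a bounded quantity, so a sufficiently large $A$ closes the loop (after routine bookkeeping for the floors in $n_i=\lfloor np_i\rfloor$ and a base case for $n<\max_i\lceil 2/p_i\rceil$ handled by $e_{n,0}(\mu)\leq|K|$). The existence of optimal codebooks, needed to make the recursion an exact inequality, is supplied by Lemma~\ref{pre1} together with \cite[Theorem~2.5]{GL:04}, which the paper already invokes.

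What each approach buys: your argument is uniform across Cases~I and~II, uses nothing beyond~(\ref{s55}), the similitude scaling, and the Graf--Luschgy asymptotics~(\ref{gl}) for $\nu$, and in particular never touches the explicit mass formulas in Lemmas~\ref{g11}--\ref{g112}. The paper's approach is substantially more laborious but produces finer structural information---explicit convergence rates for the auxiliary ratios $\eta_j,\xi_j$---that your method does not see.
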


The remaining part of the paper is organized as follows. In section 2, we collect some basic facts on the ISMs in Cases I, II. The proofs for the dimensional result and the positivity of the lower quantization coefficient will be given in section 3.
In sections 4, 5, we are devoted to the finiteness of the upper quantization coefficient of order zero, respectively for measures $\mu$ in Case I and Case II.

Except for some basic facts about the mass distribution of $\mu$, the subsequent proofs will be given in a self-contained manner. In addition, the OSC and the IOSC are required only to obtain the rules of the mass distributions of ISMs; these conditions will not be explicitly used in section 3-5.

\section{Preliminaries}
First let us recall some notations and definitions. Set
\[
\Omega_n:=\{1,\ldots, N\}^n,\;\Phi_n:=\{1,\ldots, M\}^n, \;\Omega^*:=\bigcup_{n=1}^\infty\Omega_n, \;\Phi^*:=\bigcup_{n=1}^\infty\Phi_n.
\]
 We define $|\sigma|:=n$ for $\sigma\in\Omega_n\cup\Phi_n$ and $\sigma|_0=\theta:=$empty word. For
any $\sigma\in\Omega^*$ with $|\sigma|\geq n$, we
write $\sigma|_n:=(\sigma_1,\ldots,\sigma_n)$.
For $0\leq h<n$ and $\sigma\in\Omega_n$, we set
\begin{eqnarray*}
\sigma^{(l)}_{-h}:=(\sigma_{h+1},\ldots,\sigma_n),\;\;\sigma^-:=\sigma|_{|\sigma|-1}.
\end{eqnarray*}
Clearly, $\sigma^{(l)}_{-0}:=\sigma$. For $\sigma,\tau\in\Omega^*$, we write
\[
\sigma\ast\tau:=(\sigma_1,\ldots,\sigma_{|\sigma|},\tau_1,\ldots,\tau_{|\tau|}).
\]

If $\sigma,\tau\in\Omega^*$ and
$|\sigma|\leq|\tau|,\sigma=\tau|_{|\sigma|}$, then we write $\sigma\preceq\tau$ and call $\sigma$ a predecessor of $\tau$, and $\tau$ a descendant of $\sigma$; if $\sigma\preceq\tau$ and $\sigma\neq\tau$, we write $\sigma\precneqq\tau$ call $\tau$ a \emph{proper descendant} of $\sigma$. Two words $\sigma,\tau\in\Omega^*$ are said to be incomparable if we have neither $\sigma\preceq\tau$
nor $\tau\preceq\sigma$. A finite set $\Gamma\subset\Omega^*$ is
called a finite anti-chain if any two words $\sigma,\tau$ in
$\Gamma$ are incomparable. A finite anti-chain is said to be maximal if
any word $\sigma\in\Omega^{\mathbb{N}}$ has a predecessor in $\Gamma$. Finite maximal anti-chains in $\Phi^*$ and all the above notations for words in $\Phi^*$ are defined in the same way as for words in $\Omega^*$.

Recall that $s_i$ is the contraction ratio of $f_i,1\leq i\leq N$, and $c_i$ is the contraction ratio of $g_i,1\leq i\leq M$. For $\sigma\in\Omega_n$, set and
\begin{eqnarray*}
s_\sigma:=\prod_{h=1}^ns_{\sigma_h},\;p_\sigma:=\prod_{h=1}^np_{\sigma_h};\;f_{\sigma}:=f_{\sigma_1}\circ\cdots\circ f_{\sigma_n},\;E_\sigma:=f_\sigma(E).
\end{eqnarray*}
For every $n\geq 1$ and $\rho\in\Phi_n$, we define
\begin{eqnarray*}
t_\rho:=\prod_{h=1}^nt_{\rho_h},\;\;c_\rho:=\prod_{h=1}^nc_{\rho_h},\;\;g_\rho:=g_{\rho_1}\circ\cdots\circ g_{\rho_n},\;\;C_\rho:=g_\rho(C).
\end{eqnarray*}

With the next two lemmas, we collect some basic facts on the ISMs in Case I and Case II. These facts are easy consequences of the definition of an ISM and the conditions in Case I, II. We refer to \cite{zhu:13,zhu:14} for the proofs.
\begin{lemma}\label{g11}(see \cite[Lemma 2.1]{zhu:13})
Let $\mu$ be an ISM in Case I. we have
\begin{eqnarray}\label{s12}
\mu(E_\sigma)=\sum_{h=0}^{k-1}p_0p_{\sigma|_h} t_{\sigma_{-h}^{(l)}}+p_\sigma,\;\;\sigma\in\Omega_k,\;\;k\geq 1.
\end{eqnarray}
\end{lemma}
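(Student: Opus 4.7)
The plan is to prove the formula by induction on $k = |\sigma|$, exploiting the defining functional equation $\mu = p_0\nu + \sum_{i=1}^N p_i\mu\circ f_i^{-1}$ together with two measure-zero overlap facts that follow from the OSC: (a) $\nu(E_\sigma) = t_\sigma$ for all $\sigma \in \Omega^*$, which is the standard self-similar cylinder formula; and (b) $\mu(f_i(E)\cap f_j(E)) = 0$ for $i\neq j$, which holds because every summand in the fixed-point equation for $\mu$ assigns zero mass to such overlaps.

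For the base case $k = 1$ and $\sigma = (i)$, I would compute directly
\[
\mu(E_i) = p_0\nu(E_i) + \sum_{j=1}^N p_j\mu(f_j^{-1}(E_i)).
\]
By (a), $\nu(E_i) = t_i$. For the second sum, note that $f_i^{-1}(E_i) = E$ (since $f_i$ is injective and $E_i = f_i(E)$), giving $\mu(f_i^{-1}(E_i)) = 1$. For $j\neq i$, $f_j^{-1}(E_i)\cap E$ maps under $f_j$ into $f_i(E)\cap f_j(E)$, which by (b) has $\mu$-measure zero; since $\mu$ is supported on $E=K$, this forces $\mu(f_j^{-1}(E_i)) = 0$. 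So $\mu(E_i) = p_0 t_i + p_i$, matching the claimed formula (with the $h=0$ term using the convention $p_{\sigma|_0} = p_\theta = 1$).

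For the inductive step, assume the formula holds at length $k$ and take $\sigma \in \Omega_{k+1}$. The same computation gives $\mu(E_\sigma) = p_0\nu(E_\sigma) + p_{\sigma_1}\mu(E_{\sigma^{(l)}_{-1}})$, since $f_{\sigma_1}^{-1}(E_\sigma) = E_{\sigma^{(l)}_{-1}}$ and the other terms vanish by (b). Apply (a) to get $\nu(E_\sigma) = t_\sigma$, and apply the induction hypothesis to $\sigma^{(l)}_{-1}\in\Omega_k$. A straightforward reindexing (using $(\sigma^{(l)}_{-1})|_h = (\sigma_2,\dots,\sigma_{h+1})$ and $(\sigma^{(l)}_{-1})^{(l)}_{-h} = \sigma^{(l)}_{-(h+1)}$) converts the sum over $h\in\{0,\dots,k-1\}$ in the hypothesis, multiplied by $p_{\sigma_1}$, into the sum over $h'\in\{1,\dots,k\}$ required at the new length; the leading $p_0 t_\sigma = p_0 p_{\sigma|_0} t_{\sigma^{(l)}_{-0}}$ supplies the missing $h'=0$ term, and $p_{\sigma_1}p_{\sigma^{(l)}_{-1}} = p_\sigma$ supplies the remaining pure $p_\sigma$ term.

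The main obstacle is verifying (b) carefully for the in-homogeneous measure $\mu$, not just for $\nu$: one needs to know that the overlap $f_i(E)\cap f_j(E)$ is $\nu$-null (standard for self-similar measures under OSC) and then bootstrap via the fixed-point relation to deduce that it is $\mu$-null as well. Once this is in place, the induction is essentially a bookkeeping exercise; no estimate is needed beyond exact evaluations, so no approximation or limit arguments enter.
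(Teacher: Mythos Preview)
The paper does not actually prove this lemma in the text; it merely cites \cite{zhu:13} and remarks that the statement is an ``easy consequence of the definition of an ISM and the conditions in Case~I.'' Your induction on $|\sigma|$ using the defining relation $\mu = p_0\nu + \sum_i p_i\,\mu\circ f_i^{-1}$ is the natural argument and is correct; it is almost certainly what the cited reference does.

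One small comment on the point you flag as the main obstacle, namely that $\mu(f_j^{-1}(E_\sigma))=0$ when $j\neq\sigma_1$. Your ``bootstrap via the fixed-point relation'' is workable but a little awkward to execute directly, because $f_k^{-1}(E_i\cap E_j)\cap E$ is not obviously contained in a union of first-level overlaps. A cleaner route is to iterate the fixed-point equation to obtain the series representation
\[
\mu \;=\; \sum_{h\ge 0}\;\sum_{\omega\in\Omega_h} p_0\,p_\omega\,(\nu\circ f_\omega^{-1}),
\]
valid since $\sum_{\omega\in\Omega_n}p_\omega=(1-p_0)^n\to 0$. Each summand $(f_\omega)_*\nu$ is supported on $E_\omega$, and under the OSC one has $\nu(f_\omega^{-1}(E_\sigma\cap E_\omega))=t_\omega^{-1}\nu(E_\sigma\cap E_\omega)=0$ whenever $\omega$ and $\sigma$ are incomparable. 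This kills all the cross terms at once and in fact yields the formula for $\mu(E_\sigma)$ directly, without induction: the terms with $\omega=\sigma|_h$ for $h<k$ give the sum $\sum_{h=0}^{k-1}p_0p_{\sigma|_h}t_{\sigma^{(l)}_{-h}}$, while the terms with $\sigma\preceq\omega$ contribute $p_0p_\sigma\sum_{h\ge 0}(1-p_0)^h=p_\sigma$. Either way the argument goes through; your version is fine once (b) is justified along these lines.
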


To get a description of the support of an ISM in Case II, we write
\begin{eqnarray}\label{s30}
\Gamma(\sigma,h):=\{\tau\in\Omega_{|\sigma|+h}:\sigma\preceq\tau\},\;\;\Gamma^*(\sigma):=\bigcup_{h\geq 1}\Gamma(\sigma,h).
\end{eqnarray}
We see that $\Gamma^*(\sigma)$ is the set of all proper descendants of $\sigma$.

For a finite maximal antichain $\Upsilon\subset\Omega^*$, we define
\[
l(\Upsilon):=\min_{\rho\in\Upsilon}|\rho|,\;\;L(\Upsilon):=\max_{\rho\in\Upsilon}|\rho|.
\]
For each $\sigma\in\Omega_{l(\Upsilon)}$, we define
\begin{eqnarray*}
\Lambda_\Upsilon(\sigma):=\{\tau\in\Omega^*:\sigma\preceq\tau,\Gamma^*(\tau)\cap\Upsilon\neq\emptyset\},\;\;\Lambda_\Upsilon^*:=\bigcup_{\sigma\in\Omega_{l(\Upsilon)}}\Lambda_\Upsilon(\sigma).
\end{eqnarray*}
One can see that $\Lambda_\Upsilon(\sigma)$ consists of all descendants of $\sigma$ which have a proper descendant in $\Upsilon$. For example, if $\sigma\preceq\tau\precneqq\omega$ and $\omega\in\Upsilon$, then $\tau\in\Lambda_\Upsilon(\sigma)$.

\begin{lemma}\label{g112}(see \cite[Lemmas 2.2, 2.3]{zhu:14})
Let $\mu$ be an ISM in Case II. Then

(i) For a finite maximal antichain $\Upsilon$ in $\Omega^*$, we have
\begin{eqnarray}\label{s5}
K=\bigg(\bigcup_{h=0}^{l(\Upsilon)-1}\bigcup_{\sigma\in\Omega_h}f_\sigma(C)\bigg)\cup\bigg(\bigcup_{\sigma\in\Lambda_\Upsilon^*}f_\sigma(C)\bigg)
\cup\bigg(\bigcup_{\sigma\in\Upsilon}f_\sigma(K)\bigg);
\end{eqnarray}

(ii) For every $\sigma\in\Omega^*$ and $\omega\in\Phi^*$, we have
\[
\mu(f_\sigma(K))=p_\sigma,\;\mu(f_\sigma(C_\omega))=p_0p_\sigma t_\omega.
\]
\end{lemma}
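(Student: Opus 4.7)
The plan is to derive both statements from the defining recursions~(\ref{s15}) and~(\ref{s55}), using the IOSC to control overlaps.

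For~(i), iterate $K=C\cup\bigcup_{i=1}^N f_i(K)$ non-uniformly along the antichain $\Upsilon$. First apply it $l(\Upsilon)$ times uniformly to obtain
\[
K=\bigcup_{h=0}^{l(\Upsilon)-1}\bigcup_{\sigma\in\Omega_h} f_\sigma(C)\;\cup\;\bigcup_{\sigma\in\Omega_{l(\Upsilon)}} f_\sigma(K).
\]
For each $\sigma\in\Omega_{l(\Upsilon)}\setminus\Upsilon$, maximality of $\Upsilon$ forces $\Gamma^*(\sigma)\cap\Upsilon\neq\emptyset$, so $\sigma\in\Lambda_\Upsilon^*$; replace $f_\sigma(K)$ by $f_\sigma(C)\cup\bigcup_i f_{\sigma\ast i}(K)$ and recurse on each child. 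The process halts at depth $L(\Upsilon)$. A direct bookkeeping check (every expanded node is, by definition, an element of $\Lambda_\Upsilon^*$; every leaf is an element of $\Upsilon$) yields~(\ref{s5}).

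For~(ii), I would combine inductive lower bounds with a squeeze coming from the iterated decomposition of $K$. By induction on $|\sigma|$, (\ref{s55}) yields
\[
\mu(f_\sigma(K))\geq p_\sigma,\qquad \mu(f_\sigma(C_\omega))\geq p_0 p_\sigma t_\omega,
\]
the base $\sigma=\theta$ using $\mu(K)=1=p_\theta$ and $\mu(C_\omega)\geq p_0\,\nu(C_\omega)=p_0 t_\omega$ (the standard OSC consequence $\nu(C_\omega)=t_\omega$ for the $(g_i)_{i=1}^M$-self-similar measure $\nu$); the inductive step uses that the $p_0\nu(\cdot)$ term vanishes by IOSC~(4), because $f_\sigma(K),\,f_\sigma(C_\omega)\subset f_{\sigma_1}({\rm cl}(U))$ is disjoint from $C$, and keeps only the $i=\sigma_1$ branch of the residual sum. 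Iterating (\ref{s15}) $n$ times and then writing $C=\bigcup_{\omega\in\Phi_m} C_\omega$ gives
\[
K=\bigcup_{h=0}^{n-1}\bigcup_{\tau\in\Omega_h}\bigcup_{\omega\in\Phi_m} f_\tau(C_\omega)\;\cup\;\bigcup_{\sigma\in\Omega_n} f_\sigma(K),
\]
and a direct calculation shows the sum of the lower bounds over this decomposition is exactly $\sum_{h<n} p_0(1-p_0)^h+(1-p_0)^n=1$ (using $\sum_\omega t_\omega=1$). Hence, provided the pieces are $\mu$-essentially disjoint, $\mu(K)=\sum(\text{piece masses})\geq\sum(\text{lower bounds})=1=\mu(K)$ forces every inequality to be an equality, so $\mu(f_\sigma(K))=p_\sigma$ and $\mu(f_\tau(C_\omega))=p_0 p_\tau t_\omega$; letting $n,m$ vary gives the full statement.

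The main obstacle is the disjointness-modulo-$\mu$ claim, which reduces to proving $\mu(f_\sigma(\partial U))=0$ for every $\sigma\in\Omega^*$. The base case $\mu(\partial U)=0$ is clean: for every $\tau$ the set $\{x\in C:f_\tau(x)\in\partial U\}$ is empty, since $f_\tau(x)\in f_\tau(U)\subset U$ by iterating IOSC~(1), and $U\cap\partial U=\emptyset$; consequently every term in the series $\mu(\partial U)=p_0\sum_h\sum_\tau p_\tau\,\nu(f_\tau^{-1}(\partial U))$ vanishes. The inductive extension to general $\sigma$ requires a bootstrap: IOSC~(2) localises the intersections $f_i(K)\cap f_\sigma(\partial U)$ to $f_i(\partial U)\cup f_\sigma(\partial U)$, so the step reduces to $\mu$-nullity of previously-established boundaries together with the self-similar structure $\nu=\sum_i t_i\,\nu\circ g_i^{-1}$ to transport $\nu(\partial U)=0$ across the similitude pullbacks appearing in the incomparable-pair cross-terms. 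It is this last piece that accounts for the reference to \cite{zhu:14}.
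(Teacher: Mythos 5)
Your part (i) is correct and is exactly the induction the paper indicates. For part (ii) the overall strategy is also sound and matches the spirit of what the authors delegate to \cite{Olsen:08} and \cite{zhu:14}: a term-by-term lower bound, a decomposition of $K$ whose lower bounds sum to $1$, and a squeeze that needs the pieces to be $\mu$-essentially disjoint, which in turn rests on $\mu(f_\sigma(\partial U))=0$.

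There are two concrete gaps, though. First, your "bootstrap'' for $\mu(f_\sigma(\partial U))=0$ at general $\sigma$ is circular as written. You localise $f_i(K)\cap f_\sigma(\partial U)$ inside $f_i(\partial U)\cup f_\sigma(\partial U)$; but then estimating $\mu(f_i^{-1}(f_\sigma(\partial U)))$ by $\mu\bigl(f_i^{-1}(f_i(\partial U)\cup f_\sigma(\partial U))\bigr)=\mu(\partial U\cup f_i^{-1}(f_\sigma(\partial U)))$ leaves the same unknown quantity on the right. You need the sharper localisation: for $i\neq\sigma_1$ one has $f_i({\rm cl}(U))\cap f_{\sigma_1}({\rm cl}(U))\subset f_i(\partial U)$, hence $f_i(K)\cap f_\sigma(\partial U)\subset f_i(\partial U)$ and so $\mu(f_i^{-1}(f_\sigma(\partial U)))\leq\mu(\partial U)=0$; for $i=\sigma_1$ the length drops by one, and the induction closes. (Equivalently, and more cleanly, run the series representation $\mu=p_0\sum_{h\geq 0}\sum_{\tau\in\Omega_h}p_\tau\,\nu\circ f_\tau^{-1}$ that you already used for the base case, for \emph{general} $\sigma$: in the four cases $\tau=\sigma$, $\sigma\precneqq\tau$, $\tau\precneqq\sigma$, $\tau,\sigma$ incomparable, each set $f_\tau^{-1}(f_\sigma(\partial U))\cap C$ is empty or $\nu$-null using $C\subset U$, $f_\omega(U)\subset U$, IOSC~(2) and IOSC~(4).) Second, you never address the overlaps $f_\tau(C_\omega)\cap f_\tau(C_{\omega'})=f_\tau(C_\omega\cap C_{\omega'})$ for distinct $\omega,\omega'\in\Phi_m$; these are not empty in general, only $\mu$-null, and proving that requires transporting the classical OSC fact $\nu(C_\omega\cap C_{\omega'})=0$ for $(g_i)_{i=1}^M$ through the same series representation (the only $\rho$ for which $\nu\bigl(f_\rho^{-1}(f_\tau(C_\omega\cap C_{\omega'}))\bigr)$ can be nonzero is $\rho=\tau$, since $f_\rho(C)\cap f_\tau(C)=\emptyset$ for $\rho\neq\tau$). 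With these two repairs the squeeze argument goes through.
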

Here we remark that, (i) can be easily shown by using (\ref{s15}) and mathematical induction; (ii) is a consequence of (\ref{s15}) the IOSC and some basic results in \cite{Olsen:08}.

Next, we study the $\mu$-measure of a closed ball $B_\epsilon(x)$.
For this, we set
\[
\underline{s}:=\min_{1\leq i\leq N} s_i,\;\;\underline{c}:=\min_{1\leq i\leq M} c_i.
\]
For every $\epsilon\in(0,\underline{s})$, we define
\begin{eqnarray}\label{kk1}
S_\epsilon:=\{\sigma\in\Omega^*:s_{\sigma^-}\geq\epsilon>s_\sigma\},\;\;l(S_\epsilon):=\min_{\sigma\in S_\epsilon}|\sigma|.
\end{eqnarray}
Then for every $h\leq l(S_\epsilon)-1$ and $\sigma\in\Omega_h$, we have, $s_\sigma\geq \epsilon$. Similarly, for $\sigma\in\Lambda_{S_\epsilon}^*$, there exists a $\tau\in S_\epsilon$ with $\sigma\precneqq\tau$; hence, we have, $s_\sigma\geq s_{\tau^-}\geq\epsilon$. We write
\[
\Psi(\epsilon):=\bigg(\bigcup_{h=0}^{l(S_\epsilon)-1}\Omega_h\bigg)\cup\Lambda_{S_\epsilon}^*
\]
Then, for words $\sigma$ in $\Psi(\epsilon)$, we may define
\begin{eqnarray}\label{kk2}
T_\epsilon(\sigma):=\{\rho\in\Phi^*:s_\sigma c_{\rho^-}\geq\epsilon>s_\sigma c_\rho\}.
\end{eqnarray}
Then $T_\epsilon(\sigma)$ is a finite maximal antichain in $\Phi^*$. By the self-similarity of $C$, for each $\sigma\in\Psi(\epsilon)$, we have $C=\bigcup_{\rho\in T_\epsilon(\sigma)}g_\rho(C)$. Thus, by (\ref{s5}), we have
\begin{eqnarray}\label{kk3}
K=\bigg(\bigcup_{\sigma\in\Psi(\epsilon)}\bigcup_{\rho\in T_\epsilon(\sigma)}f_\sigma(g_\rho(C))\bigg)
\cup\bigg(\bigcup_{\sigma\in S_\epsilon}f_\sigma(K)\bigg).
\end{eqnarray}

By Lemma 3.4 of \cite{zhu:14}, there exists an open set $W\supset C$ such that
\begin{eqnarray}\label{gg2}
g_i(W)\subset W,\;\;{\rm cl}(W)\cap f_i({\rm cl}(U))=\emptyset\;\; {\rm for\; all} \;\;1\leq i\leq N.
\end{eqnarray}
Since $C\subset U$, we have $\delta_0:=d(C,U^c)>0$. Thus, by replacing $\epsilon_0$ in \cite[Lemma 3.4]{zhu:14} with $\min\{\epsilon_0,2^{-1}\delta_0\}$, we actually can choose the above $W$ as a subset of $U$.
Since $(g_i)_{i=1}^N$ satisfies the OSC, let $J$ be a nonempty compact set such that (cf. \cite{Gr:95,Schief:94}),
\begin{eqnarray}
&&J={\rm cl}({\rm int}(J)); \;\;{\rm int}(J)\cap C\neq\emptyset;\;\;
g_i(J)\subset J,\;1\leq j\leq M; \\&&g_i({\rm int}(J))\cap g_j({\rm int}(J))=\emptyset,\;\;1\leq i\neq j\leq M.\label{gg1}
\end{eqnarray}
By Lemma 3.3 of \cite{Gr:95}, we have $\nu({\rm int}(J))=1$.
\begin{lemma}
Set $V:={\rm int}(J)\cap W$. Then the following sets are pairwise disjoint:
\[
f_\tau(U),\tau\in S_\epsilon; f_\sigma(g_{\rho}(V)),\rho\in T_\epsilon(\sigma),\sigma\in\Psi(\epsilon).
\]
\end{lemma}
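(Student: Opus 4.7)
The strategy is to split the pairs into three types, using the OSC for $(f_i)_{i=1}^N$ and $(g_i)_{i=1}^M$, the separation property ${\rm cl}(W)\cap f_i({\rm cl}(U))=\emptyset$ from \eqref{gg2}, and the nestings $V\subset W\subset U$ and $g_i(W)\subset W$. For pairs $f_\tau(U),f_{\tau'}(U)$ with $\tau\neq\tau'$ in $S_\epsilon$, the antichain property of $S_\epsilon$ and an iteration of the OSC for $(f_i)_{i=1}^N$ immediately give $f_\tau(U)\cap f_{\tau'}(U)=\emptyset$.

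For pairs $f_\sigma(g_\rho(V)),f_{\sigma'}(g_{\rho'}(V))$ with $(\sigma,\rho)\neq(\sigma',\rho')$, three subcases arise. If $\sigma=\sigma'$ and $\rho\neq\rho'$, then $\rho,\rho'$ are incomparable in $T_\epsilon(\sigma)$; since $V\subset{\rm int}(J)$, the OSC for $(g_i)_{i=1}^M$ applied to ${\rm int}(J)$ yields $g_\rho(V)\cap g_{\rho'}(V)=\emptyset$, which passes to the $f_\sigma$-images by injectivity. If $\sigma\neq\sigma'$ are incomparable, the inclusion $g_\rho(V)\subset W\subset U$ (obtained by iterating $g_i(W)\subset W$) reduces disjointness to $f_\sigma(U)\cap f_{\sigma'}(U)=\emptyset$, which is the OSC. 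If $\sigma\precneqq\sigma'$ (WLOG), write $\sigma'=\sigma\ast\eta$ with $|\eta|\geq 1$ and apply $f_\sigma^{-1}$; the task reduces to $g_\rho(V)\cap f_\eta(g_{\rho'}(V))=\emptyset$, which follows from $g_\rho(V)\subset W$, $f_\eta(g_{\rho'}(V))\subset f_\eta(W)\subset f_{\eta_1}(U)$, and \eqref{gg2}.

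For the mixed pairs $f_\tau(U),f_\sigma(g_\rho(V))$, I first observe that $\sigma\notin S_\epsilon$ (if $\sigma\in\bigcup_{h<l(S_\epsilon)}\Omega_h$ the lengths rule this out; if $\sigma\in\Lambda_{S_\epsilon}^*$, this would put two comparable words into the antichain $S_\epsilon$). The same two arguments rule out $\tau\precneqq\sigma$. Therefore either $\sigma,\tau$ are incomparable---handled by $g_\rho(V)\subset U$ and the OSC---or $\sigma\precneqq\tau$, and then writing $\tau=\sigma\ast\eta$ with $|\eta|\geq 1$ and applying $f_\sigma^{-1}$ reduces to $g_\rho(V)\cap f_\eta(U)=\emptyset$, which follows from $g_\rho(V)\subset W$, $f_\eta(U)\subset f_{\eta_1}(U)$, and \eqref{gg2}.

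The main obstacle throughout is the comparable subcases in Types 2 and 3: the OSC alone cannot separate an ancestor's $g_\rho(V)$-piece from a descendant's $f$-image, and the ``vertical'' separation provided by \eqref{gg2}, combined with $g_i(W)\subset W$, is precisely what closes this gap.
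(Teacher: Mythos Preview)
Your proof is correct and follows essentially the same approach as the paper's: the same case split into pairs within $S_\epsilon$, pairs of $f_\sigma(g_\rho(V))$-sets (with the subcases $\sigma=\sigma'$, $\sigma,\sigma'$ incomparable, and $\sigma\precneqq\sigma'$), and mixed pairs, using the OSC for $(f_i)$ and $(g_i)$ in the incomparable subcases and the separation \eqref{gg2} together with $g_i(W)\subset W\subset U$ in the comparable ones. Your justification that $\tau\precneqq\sigma$ cannot occur in the mixed case matches the paper's argument exactly.
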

\begin{proof}
(a1) Let $\sigma\in\Psi(\epsilon)$ and $\rho^{(1)},\rho^{(2)}\in T_\epsilon(\sigma)$. Since $T_\epsilon(\sigma)$ is an antichain, $\rho^{(1)},\rho^{(2)}$
are incomparable. Set $h:=\min\{l:\rho^{(1)}_l\neq\rho^{(2)}_l\}$ and write
\[
\rho^{(1)}=\rho^{(1)}|_{h-1}\ast\rho^{(1)}_h\ast\widetilde{\rho^{(1)}},\;\rho^{(2)}=\rho^{(1)}|_{h-1}\ast\rho^{(2)}_h\ast\widetilde{\rho^{(2)}}.
\]
Then, using (\ref{gg1}), we deduce
\begin{eqnarray*}
&&f_\sigma(g_{\rho^{(1)}}(V))\cap f_\sigma(g_{\rho^{(2)}}(V))=f_\sigma(g_{\rho^{(1)}}(V)\cap g_{\rho^{(2)}}(V))\\&&\;\;\subset f_\sigma(g_{\rho^{(1)}}({\rm int}(J))\cap g_{\rho^{(2)}}({\rm int}(J)))\\&&\;\;\subset f_\sigma\circ g_{\rho^{(1)}|_{h-1}}(g_{\rho^{(1)}_h}({\rm int}(J))\cap g_{\rho^{(2)}_h}({\rm int}(J)))=\emptyset.
\end{eqnarray*}

(a2) For distinct words $\sigma,\tau\in\Psi(\epsilon)$ and $\rho^{(1)}\in T_\epsilon(\sigma),\rho^{(2)}\in T_\epsilon(\tau)$, we have
if $\sigma,\tau$ are incomparable, then by (\ref{gg1}) and (2) of the IOSC, we have
\begin{eqnarray*}
&&f_\sigma(g_{\rho^{(1)}}(V))\cap f_\tau(g_{\rho^{(2)}}(V))\subset f_\sigma(g_{\rho^{(1)}}(W))\cap f_\tau(g_{\rho^{(2)}}(W))\\&&\;\;\subset f_\sigma(W)\cap f_\tau(W)\subset f_\sigma(U)\cap f_\tau(U)=\emptyset.
\end{eqnarray*}
if $\sigma,\tau$ are comparable, we may assume that $\sigma\preceq\tau$ and $\tau=\sigma\ast\omega$. Then by (\ref{gg2}),
\begin{eqnarray*}
&&f_\sigma(g_{\rho^{(1)}}(V))\cap f_\tau(g_{\rho^{(2)}}(V))\subset f_\sigma(g_{\rho^{(1)}}(W)\cap f_\omega(g_{\rho^{(2)}}(W))\\&&\;\;\subset f_\sigma(W\cap f_\omega(W))\subset f_\sigma(W\cap f_\omega(U))\subset f_\sigma(W\cap f_{\omega_1}(U))=\emptyset.
\end{eqnarray*}

(a3) Let $\sigma\in\Psi(\epsilon),\rho\in T_\epsilon(\sigma)$ and $\tau\in S_\epsilon$. If $\sigma,\tau$ are incomparable, then by (\ref{gg1}) and (2) of the IOSC, we have
\begin{eqnarray*}
&&f_\sigma(g_{\rho}(V))\cap f_\tau(U)\subset f_\sigma(g_{\rho}(W))\cap f_\tau(U)\\&&\;\;\subset f_\sigma(W)\cap f_\tau(U)\subset f_\sigma(U)\cap f_\tau(U)=\emptyset.
\end{eqnarray*}
If $\sigma,\tau$ are comparable, then $\sigma\preceq\tau$ and $|\sigma|<|\tau|$; in fact, for $\sigma\in\bigcup_{h=0}^{l(S_\epsilon)-1}\Omega_h$, we have, $|\sigma|<|\tau|$; for $\sigma\in\Lambda_S^*$ and there exists a proper descendant $\rho$ of $\sigma$ such that $\rho\in S_\epsilon$, thus it is not possible that $\tau\preceq\sigma$. Therefore, we may write $\tau=\sigma\ast\omega$. Then
\begin{eqnarray*}
&&f_\sigma(g_{\rho}(V))\cap f_\tau(U)\subset f_\sigma(g_{\rho}(W))\cap f_\tau(U)\\&&\;\;\subset f_\sigma(W)\cap f_\tau(U)= f_\sigma(W\cap f_\omega(U))\subset f_\sigma(W\cap f_{\omega_1}(U))=\emptyset.
\end{eqnarray*}

(a4) Let $\sigma,\tau$ be an arbitrary pair of distinct words in $S_\epsilon$. Since $S_\epsilon$ is an antichain, $\sigma,\tau$ are incomparable. Set $h:=\min\{l:\sigma_l\neq\tau_l\}$ and write
\[
\sigma=\sigma|_{h-1}\ast\sigma_h\ast\widetilde{\sigma},\tau=\sigma|_{h-1}\ast\tau_h\ast\widetilde{\tau}.
\]
Then, using the condition (1) and (2) of the IOSC, we deduce
\begin{eqnarray*}
&&f_\sigma(U)\cap f_\tau(U)=f_{\sigma|_{h-1}}(f_{\sigma_h\ast\widetilde{\sigma}}(U)\cap f_{\tau_h\ast\widetilde{\tau}}(U))\subset f_{\sigma|_{h-1}}(f_{\sigma_h}(U)\cap f_{\tau_h}(U))=\emptyset.
\end{eqnarray*}
This completes the proof of the lemma.
\end{proof}
\begin{lemma}\label{pre1}
Let $\mu$ be an ISM in Case I or Case II. Then there exist two constants $\lambda_1,\eta_1>0$ such that
$\sup_{x\in\mathbb{R}^q}\mu(B(x,\epsilon))\leq \lambda_1 \epsilon^\eta_1$ for all $\epsilon>0$.
\end{lemma}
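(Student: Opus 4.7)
The plan is to establish the Frostman-type upper bound by combining two ingredients: (a) a uniform upper bound on the $\mu$-measure of each ``cell'' arising from the natural decomposition of $K$ at scale $\epsilon$; and (b) a uniform upper bound on the number of such cells that meet any ball $B(x,\epsilon)$. Since $\mu$ is a probability measure, $\mu(B(x,\epsilon))\leq 1$ always, so it suffices to treat small $\epsilon$ (say $\epsilon\in(0,\underline{s}\underline{c})$) and then absorb the remaining range into the constant $\lambda_1$.

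For step (a) in Case I, I would use the covering $\{E_\sigma:\sigma\in S_\epsilon\}$ of $K=E$. Setting $k:=|\sigma|$ and $q:=\max\{\max_{1\leq i\leq N}p_i,\max_{1\leq i\leq N}t_i\}<1$, Lemma \ref{g11} yields
\[
\mu(E_\sigma)=p_\sigma+p_0\sum_{h=0}^{k-1}p_{\sigma|_h}t_{\sigma^{(l)}_{-h}}\leq(1+p_0 k)q^k.
\]
Since $\underline{s}^k\leq s_\sigma<\epsilon$ for $\sigma\in S_\epsilon$, we have $k>|\log\epsilon|/|\log\underline{s}|$; the polynomial factor $k$ is absorbed via $kq^k\leq q^{k/2}$ for large $k$, producing $\mu(E_\sigma)\leq C\epsilon^\eta$ with $\eta=(\log q)/(2\log\underline{s})>0$. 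For Case II I would use the refined decomposition (\ref{kk3}); by Lemma \ref{g112}(ii) the corresponding masses are $\mu(f_\sigma(g_\rho(C)))=p_0p_\sigma t_\rho$ and $\mu(f_\tau(K))=p_\tau$. The bounds $s_\sigma c_\rho\geq\underline{s}^{|\sigma|}\underline{c}^{|\rho|}$ and $s_\tau\geq\underline{s}^{|\tau|}$, combined with $s_\sigma c_\rho<\epsilon$ and $s_\tau<\epsilon$, give the analogous estimate with $\eta$ taken as the minimum of the ratios $(\log\max_ip_i)/(\log\underline{s})$, $(\log\max_it_i)/(\log\underline{c})$.

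For step (b) I would invoke a volume-packing argument. In Case II, the preceding lemma shows that the open sets $\{f_\sigma(g_\rho(V)):\sigma\in\Psi(\epsilon),\rho\in T_\epsilon(\sigma)\}\cup\{f_\tau(U):\tau\in S_\epsilon\}$ are pairwise disjoint; since $U$ and $V$ each contain a fixed Euclidean ball and the relevant contraction factors satisfy $s_\sigma c_\rho\geq\underline{c}\epsilon$ and $s_\tau\geq\underline{s}\epsilon$, each such thickening contains a ball of radius bounded below by a constant multiple of $\epsilon$. On the other hand, if the corresponding piece $f_\sigma(g_\rho(C))$ or $f_\tau(K)$ meets $B(x,\epsilon)$, then its bounded thickening lies inside a ball $B(x,c\epsilon)$ for some universal constant $c$ (using $\mathrm{diam}(U\cup J)<\infty$). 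Comparing Lebesgue volumes bounds the number of such thickenings by a constant $\Xi$ independent of $x$ and $\epsilon$. Case I is the standard OSC version using only $\{f_\sigma(U):\sigma\in S_\epsilon\}$.

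Combining (a) and (b) gives $\mu(B(x,\epsilon))\leq\Xi\cdot C\epsilon^\eta$, completing the proof with $\lambda_1:=\max(\Xi C,1/\epsilon_0^\eta)$ and $\eta_1:=\eta$. The main technical obstacle is the mass formula in Case I: $\mu(E_\sigma)$ is a sum of $k=|\sigma|$ terms of mixed geometric type rather than a single product, so one must carefully extract a clean power of $\epsilon$ and sacrifice a small amount of the exponent to kill the arithmetic factor $k$.
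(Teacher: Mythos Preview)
Your proof is correct and follows the same overall strategy as the paper: decompose $K$ into cells of scale $\epsilon$ (via $S_\epsilon$ in Case~I, via the refined family in (\ref{kk3}) in Case~II), bound the $\mu$-mass of each cell by a power of $\epsilon$, and bound the number of cells meeting $B(x,\epsilon)$ by a volume-packing argument based on the disjointness of the associated open sets.

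The one noteworthy difference is in Case~I. The paper simply defers to the argument of Graf--Luschgy (Proposition~5.1 of \cite{GL:04}) for self-similar measures, leaving implicit how the \emph{in-homogeneous} mass formula is handled. You make this explicit: from Lemma~\ref{g11} you obtain $\mu(E_\sigma)\leq(1+p_0k)q^k$, which carries the extra arithmetic factor $k=|\sigma|$ absent in the self-similar case, and you absorb it by halving the exponent via $kq^k\leq q^{k/2}$ for large $k$. This is the honest way to adapt the Graf--Luschgy argument here, and it costs nothing since only \emph{some} positive exponent is needed. In Case~II your exponent $\eta=\min\bigl(\log\overline{p}/\log\underline{s},\,\log\overline{t}/\log\underline{c}\bigr)$ is obtained coordinate-wise, whereas the paper uses the uniform pair $\delta_3=\min\{\underline{s},\underline{c}\}$, $\delta_4=\max\{\overline{p},\overline{t}\}$ to get $\eta_1=\log\delta_4/\log\delta_3$ in one stroke; the two computations are equivalent up to the value of the constant.
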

\begin{proof}
In Case I, we have $K=E$, and the OSC is satisfied. One can show the lemma by using the arguments of Graf and Luschgy for self-similar measures (see Proposition 5.1 of \cite{GL:04}).

Next, we assume that $\mu$ is an ISM in Case II.
Note that, $V$ (respectively $U$) contains a ball of some radius $\delta_1>0$ (respectively $\delta_2>0$ ) and is contained a closed ball of radius $|C|$ (respectively $|U|$). Thus, for $\sigma\in\Psi(\epsilon),\rho\in T_\epsilon(\sigma)$, $f_\sigma(g_{\rho}(V))$ contains a ball of radius $\underline{c}\delta_1\epsilon$ and is contained a closed ball of radius $|C|\epsilon$. Similarly, and for every $\tau\in S$, $f_\sigma(U)$ contains a ball of radius $\underline{s}\delta_2\epsilon$ and is contained a closed ball of radius $|U|\epsilon$. By \cite{Hut:81}, there exists an $L_1\geq 1$, which is independent of $\epsilon$, such that $B(x,\epsilon)$ intersects at most $L_1$ of the following sets:
\[
f_\sigma(g_{\rho}({\rm cl}(V))),\;\sigma\in\Psi(\epsilon),\;\rho\in T_\epsilon(\sigma);\;\;f_\tau({\rm cl}(U)),\;\tau\in S_\epsilon.
\]
These sets form a cover of $K$. In fact, we have $\nu({\rm cl}(V))=\nu(V)=1$, so $C={\rm supp}(\nu)\subset{\rm cl}(V)$; in addition, according to \cite{Olsen:08}, we have, $K\subset{\rm cl}(U)$. Thus, by (\ref{kk3}), we obtain
\[
K\subset\bigg(\bigcup_{\sigma\in\Psi(\epsilon)}\bigcup_{\rho\in T_\epsilon(\sigma)}f_\sigma(g_\rho({\rm cl}(V))\bigg)
\cup\bigg(\bigcup_{\sigma\in S_\epsilon}f_\sigma({\rm cl}(U))\bigg).
\]
Set $\delta_3:=\min\{\underline{s},\underline{c}\}$. By (\ref{kk2}), for $\sigma\in\Psi(\epsilon),\rho\in T_\epsilon(\sigma)$, we have
\begin{eqnarray}\label{ss3}
\delta_3^{|\sigma|+|\rho|}<\epsilon,\;\;{\rm implying}\;\;|\sigma|+|\rho|\geq\log\epsilon/\log\delta_3;
\end{eqnarray}
for $\tau\in S$, by (\ref{kk1}), we have
\begin{eqnarray}\label{ss2}
\delta_3^{|\tau|}\leq\underline{s}^{|\tau|}<\epsilon,\;\;{\rm implying}\;\;|\tau|\geq\log\epsilon/\log\delta_3;
\end{eqnarray}
Set $\delta_4:=\max\{\overline{t},\overline{p}\}$. The proof of \cite[Lemma 2.2]{zhu:14} also implies that \[
\mu(f_\sigma(g_{\rho}({\rm cl}(V))))=p_0p_\sigma t_\rho,\;\sigma\in\Omega^*,\;\rho\in\Phi^*.
\]
By (\ref{ss3}) and (\ref{ss2}), we further deduce
\begin{eqnarray*}
\mu(B(x,\epsilon))&\leq& L_1\max\{\sup_{\sigma\in\Psi(\epsilon)}\sup_{\rho\in T_\epsilon(\sigma)}\mu(f_\sigma(g_{\rho}({\rm cl}(V)))),\sup_{\tau\in S}\mu(f_\tau({\rm cl}(U)))\}\\&\leq& L_1\max\{\sup_{\sigma\in\Psi(\epsilon)}\sup_{\rho\in T_\epsilon(\sigma)}p_0p_\sigma t_\rho,\sup_{\tau\in S}p_\sigma\}\\&\leq&L_1\max\{p_0\sup_{\sigma\in\Psi(\epsilon)}\sup_{\rho\in T_\epsilon(\sigma)}\delta_4^{|\sigma|+|\rho|},\sup_{\tau\in S}\delta_4^{|\tau|}\}\\&\leq&
L_1\delta_4^{\log\epsilon/\log\delta_3}=L_1\epsilon^{\log\delta_4/\log\delta_3}.
\end{eqnarray*}
Set $\eta_1:=\log\delta_4/\log\delta_3$. The lemma follows by \cite[Lemma 12.3]{GL:00}.
\end{proof}

\section{Quantization dimension and the lower quantization coefficient}
For a Borel probability measure $\nu$, we simply write $C_n(\nu)$ for $C_{n,0}(\nu)$. Set
\[
\hat{e}_n(\nu):=\log e_{n,0}(\nu)=\inf_{\alpha\in\mathcal{D}_n}\int\log d(x,\alpha)d\nu(x),\;n\geq 1.
\]
\begin{lemma}\label{g2}
Let $\nu$ be a Borel probability measure on $\mathbb{R}^q$ with compact support and $\mu$ be a ISM as defined in (\ref{s5}). Assume that, for some constants $\lambda,\eta>0$,
\begin{eqnarray}\label{s7}
\max\bigg\{\sup_{x\in\mathbb{R}^q}\nu(B(x,\epsilon)),\sup_{x\in\mathbb{R}^q}\mu(B(x,\epsilon))\bigg\}\leq \lambda \epsilon^\eta.
\end{eqnarray}
Then $\underline{D}(\nu)\leq\underline{D}(\mu)\leq\overline{D}(\mu)\leq\overline{D}(\nu)$. In particular, if $D_0(\nu)$ exists and equals $d_0$ then $D_0(\mu)=d_0$. Moreover, if $\underline{Q}_0^{d_0}(\nu)>0$, then we have, $\underline{Q}_0^{d_0}(\mu)>0$.
\end{lemma}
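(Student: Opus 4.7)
The plan is to split the chain $\underline{D}_0(\nu)\leq\underline{D}_0(\mu)\leq\overline{D}_0(\mu)\leq\overline{D}_0(\nu)$ into the two non-trivial inequalities and handle each by exploiting the ISM equation~(\ref{s55}) directly, together with the elementary similitude scaling identity $\hat{e}_n(\mu\circ f_i^{-1})=\log s_i+\hat{e}_n(\mu)$, where $\hat{e}_n:=\log e_{n,0}$. The regularity hypothesis~(\ref{s7}) is needed only once, to guarantee via \cite[Lemma 12.3]{GL:00} that $\hat{e}_n(\mu)$ and $\hat{e}_n(\nu)$ are finite so that all the integrals of $\log d(\cdot,\alpha)$ that follow converge. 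Once both inequalities are in hand, the dimensional conclusion $D_0(\mu)=d_0$ drops out of the resulting sandwich, while the positivity $\underline{Q}_0^{d_0}(\mu)>0$ falls out of the lower-direction argument along the way.

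For the lower inequality $\underline{D}_0(\nu)\leq\underline{D}_0(\mu)$ (and the positivity claim), I fix an optimal $\alpha\in\mathcal{D}_n$ for $\mu$ and integrate $\log d(\cdot,\alpha)$ against both sides of~(\ref{s55}). Bounding $\int\log d(\cdot,\alpha)\,d\nu\geq\hat{e}_n(\nu)$ and $\int\log d(\cdot,\alpha)\,d(\mu\circ f_i^{-1})\geq\log s_i+\hat{e}_n(\mu)$ yields
\[
\hat{e}_n(\mu)\geq p_0\hat{e}_n(\nu)+(1-p_0)\hat{e}_n(\mu)+\sum_{i=1}^Np_i\log s_i,
\]
which rearranges to $e_{n,0}(\mu)\geq C_1\,e_{n,0}(\nu)$ with $C_1:=\exp\bigl(p_0^{-1}\sum_ip_i\log s_i\bigr)>0$. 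Both $\underline{D}_0(\mu)\geq\underline{D}_0(\nu)$ and $\underline{Q}_0^{d_0}(\mu)\geq C_1\underline{Q}_0^{d_0}(\nu)$ follow immediately.

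For the upper inequality $\overline{D}_0(\mu)\leq\overline{D}_0(\nu)$ I iterate~(\ref{s55}) to obtain
\[
\mu=p_0\sum_{h=0}^{k-1}\sum_{\sigma\in\Omega_h}p_\sigma\,\nu\circ f_\sigma^{-1}+\sum_{\sigma\in\Omega_k}p_\sigma\,\mu\circ f_\sigma^{-1},
\]
fix an arbitrary $c>\overline{D}_0(\nu)$, and combine the definition of $\overline{D}_0(\nu)$ with the trivial bound $\hat{e}_m(\nu)\leq\log|C|$ to produce a constant $C_2$ so that $\hat{e}_m(\nu)\leq-(\log m)/c+C_2$ for every $m\geq 1$. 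For each large $n$ I choose $k=k(n)\to\infty$ with $N^k=o(n)$ and assemble the codebook
\[
\alpha=\bigcup_{h=0}^{k-1}\bigcup_{\sigma\in\Omega_h}f_\sigma(\alpha_{\sigma,h}^\nu)\cup\{a_\sigma:\sigma\in\Omega_k\},
\]
in which $\alpha_{\sigma,h}^\nu$ is a $\nu$-optimal codebook of size $\lceil np_\sigma\rceil$ and $a_\sigma\in f_\sigma(K)$ is arbitrary. Using the scaling relation branch-by-branch, along with the identities $\sum_{\sigma\in\Omega_h}p_\sigma=(1-p_0)^h$ and $\sum_{\sigma\in\Omega_h}p_\sigma\log p_\sigma=h(1-p_0)^{h-1}\sum_ip_i\log p_i$ (and its $\log s_\sigma$ analogue), one verifies that $|\alpha|=O(n)$ and
\[
\hat{e}_{|\alpha|}(\mu)\leq-\frac{(1-(1-p_0)^k)\log n}{c}+O(1).
\]
Sending $n\to\infty$ (so that $k(n)\to\infty$ and $(1-p_0)^{k(n)}\to 0$) yields $\overline{D}_0(\mu)\leq c$, and then $c\downarrow\overline{D}_0(\nu)$ finishes the proof. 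The main obstacle will be the bookkeeping in this last step: checking that the geometric series in $h$ converge to constants independent of $k$, that the tail $\sum_{\sigma\in\Omega_k}p_\sigma(\log s_\sigma+\log|K|)$ decays exponentially in $k$, and that branches with $np_\sigma<1$ (where only the trivial bound applies) contribute only $O(1)$ without spoiling the $\log n$ coefficient of the main term.
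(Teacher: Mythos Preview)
Your argument is correct. The lower-inequality half of your proof coincides with what the paper does for the positivity claim: the paper derives the same inequality $\hat{e}_n(\mu)\geq p_0\hat{e}_n(\nu)+(1-p_0)\hat{e}_n(\mu)+\sum_ip_i\log s_i$, rearranges it to $\hat{e}_n(\mu)\geq\hat{e}_n(\nu)+p_0^{-1}\sum_ip_i\log s_i$, and reads off $\underline{Q}_0^{d_0}(\mu)\geq C_1\underline{Q}_0^{d_0}(\nu)$. You additionally extract $\underline{D}_0(\mu)\geq\underline{D}_0(\nu)$ directly from the same error comparison, which is slightly more efficient than the paper: the paper instead invokes \cite[Lemma~2.2]{zhu:12}, a general harmonic-mean-type bound for quantization dimensions of finite convex combinations, for \emph{both} dimension inequalities.

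The genuine divergence is in the upper inequality $\overline{D}_0(\mu)\leq\overline{D}_0(\nu)$. The paper obtains it in one line by citing \cite[Lemma~2.2]{zhu:12}, applied to the decomposition $\mu=p_0\nu+\sum_ip_i\mu\circ f_i^{-1}$ together with $\overline{D}_0(\mu\circ f_i^{-1})=\overline{D}_0(\mu)$. Your route---iterating the ISM identity $k$ times, allocating $\lceil np_\sigma\rceil$ points to each $\nu\circ f_\sigma^{-1}$ branch and one point to each depth-$k$ tail, then sending $k=k(n)\to\infty$ slowly enough that $N^k=o(n)$---is longer but entirely self-contained and avoids the external citation. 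It is worth noting that in the Remark following the paper's proof, the authors point out that the na\"{\i}ve one-step recursion $\hat{e}_n(\mu)\leq p_0\hat{e}_{[n/(N+1)]}(\nu)+(1-p_0)\hat{e}_{[n/(N+1)]}(\mu)+\kappa$ is not strong enough to bound $\overline{Q}_0^{d_0}(\mu)$; your iterated construction sidesteps this obstruction for the dimension statement precisely because the residual coefficient $(1-p_0)^k$ in front of the $\mu$-tail is driven to zero. The bookkeeping you flag (geometric convergence of $\sum_h h(1-p_0)^{h-1}$, the tail $\sum_{\sigma\in\Omega_k}p_\sigma\log s_\sigma=k(1-p_0)^{k-1}\sum_ip_i\log s_i\to0$, and the $np_\sigma<1$ branches contributing at most $O(N^k\log n/n)=o(1)$ provided $N^k=o(n/\log n)$) all checks out.
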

\begin{proof}
Note that $\nu,\mu$ are both compactly supported. By the assumption (\ref{s7}) and Theorem 2.5 of \cite{GL:04}, $C_n(\mu)$ and $C_n(\nu)$ are nonempty for every $n\geq 1$; by Lemma 2.1 of \cite{zhu:12}, $\underline{D}(\nu),\underline{D}(\mu)\geq\eta>0$. Since $f_i,1\leq i\leq N$, are similitudes, we have
\[
\underline{D}(\mu\circ f_i^{-1})=\underline{D}(\mu),\;\;\overline{D}(\mu\circ f_i^{-1})=\overline{D}(\mu),\; 1\leq i\leq N.
\]
Thus, using (\ref{s5}) and \cite[Lemma 2.2]{zhu:12}, we deduce
\begin{eqnarray*}
\frac{\underline{D}(\nu)(\underline{D}(\mu))^N}{p_0(\underline{D}(\mu))^N+(1-p_0)\underline{D}(\nu)(\underline{D}(\mu))^{N-1}}\leq\underline{D}(\mu).
\end{eqnarray*}
Since $\underline{D}(\mu)\geq\eta>0$, it follows that $\underline{D}(\nu)\leq\underline{D}(\mu)$. Analogously, one can see
\begin{eqnarray*}
\frac{\overline{D}(\nu)(\overline{D}(\mu))^N}{p_0(\overline{D}(\mu))^N+(1-p_0)\overline{D}(\nu)(\overline{D}(\mu))^{N-1}}\geq\overline{D}(\mu).
\end{eqnarray*}
This implies that $\overline{D}(\mu)\leq\overline{D}(\nu)$. Hence, if $D_0(\nu)$ exists and equals $d_0$, then we have $D_0(\mu)=d_0$. By (\ref{s5}) and a slight generalization of \cite[Example 4.1]{GL:04},
\begin{eqnarray*}
\hat{e}_n(\mu)&\geq& p_0\hat{e}_n(\nu)+p_1\hat{e}_n(\mu\circ f_1^{-1})+\ldots+p_N\hat{e}_n(\mu\circ f_N^{-1})\\&\geq& p_0\hat{e}_n(\nu)+(1-p_0)\hat{e}_n(\mu)+\sum_{i=1}^Np_i\log c_i.
\end{eqnarray*}
As a consequence, we have, $\hat{e}_n(\mu)\geq\hat{e}_n(\nu)+p_0^{-1}\sum_{i=1}^Np_i\log c_i$. It follows that
$$
\underline{Q}_0^{d_0}(\mu)\geq \exp\bigg(p_0^{-1}\sum_{i=1}^Np_i\log c_i\bigg)\underline{Q}_0^{d_0}(\nu).
$$
This completes the proof of the lemma.
\end{proof}
\begin{remark}{\rm
Using the argument in \cite[Example 4.1]{GL:04}, one can also get
\begin{eqnarray}\label{s31}
\hat{e}_n(\mu)&\leq& p_0\hat{e}_{[n/(N+1)]}(\nu)+p_1\hat{e}_{[n/(N+1)]}(\mu\circ f_1^{-1})+\ldots+p_N\hat{e}_{[n/(N+1)]}(\mu\circ f_N^{-1})\nonumber\\&=& p_0\hat{e}_{[n/(N+1)]}(\nu)+(1-p_0)\hat{e}_{[n/(N+1)]}(\mu)+\sum_{i=1}^Np_i\log c_i.
\end{eqnarray}
For a probability measure $P$, we write $Q_k(P):=d_0^{-1}\log n+\hat{e}_n(\mu), k\geq 1$. By (\ref{s31}),
\[
Q_n(\mu)\leq p_0Q_{[\frac{n}{N+1}]}(\nu))+(1-p_0)Q_{[\frac{n}{N+1}]}(\mu)+\kappa.
\]
where $\kappa:=d_0^{-1}\log (2N+2)+\sum_{i=1}^Np_i\log c_i$.
Thus, by the definition, we obtain
\[
\overline{Q}_0^{d_0}(\mu)\leq e^\kappa(\overline{Q}_0^{d_0}(\nu))^{p_0}(\overline{Q}_0^{d_0}(\mu))^{1-p_0}.
\]
Unfortunately, one can not get the finiteness of $\overline{Q}_0^{d_0}(\mu)$ even if $\overline{Q}_0^{d_0}(\nu)$ is finite.
}\end{remark}
\begin{proposition}\label{g12}
Let $\mu$ be an ISM in Case I or Case II. Then
\[
D_0(\mu)=d_0\;\;{\rm and}\;\;\underline{Q}_0^{d_0}(\mu)>0.
\]
\end{proposition}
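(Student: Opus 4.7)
The plan is to reduce Proposition \ref{g12} to a direct application of Lemma \ref{g2}, whose hypotheses are essentially provided by the earlier machinery. I therefore need to verify two things: (a) the driving measure $\nu$ satisfies $D_0(\nu)=d_0$ together with $\underline{Q}_0^{d_0}(\nu)>0$, and (b) both $\nu$ and $\mu$ satisfy the uniform decay estimate (\ref{s7}) with a common exponent $\eta>0$.

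For (a), in Case I the measure $\nu$ is the self-similar measure generated by $(f_i)_{i=1}^N$ (which satisfies the OSC) and the probability vector $(t_i)_{i=1}^N$, so the Graf--Luschgy formula (\ref{gl}) gives
\[
D_0(\nu)=k_0=\frac{\sum_{i=1}^N t_i\log t_i}{\sum_{i=1}^N t_i\log s_i}=d_0,
\qquad 0<\underline{Q}_0^{d_0}(\nu)\leq\overline{Q}_0^{d_0}(\nu)<\infty.
\]
In Case II the same applies to $(g_i)_{i=1}^M$ and $(t_i)_{i=1}^M$, yielding $D_0(\nu)=d_0$ and the positivity of $\underline{Q}_0^{d_0}(\nu)$ by remark (r2) in the introduction. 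In either case, both facts follow immediately from the OSC for the defining IFS of $\nu$ and (\ref{gl}).

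For (b), the bound on $\mu$ is exactly the content of Lemma \ref{pre1}. The analogous bound on $\nu$ is classical for self-similar measures under the OSC and can be cited from \cite[Proposition 5.1]{GL:04} (in Case I it is literally that proposition applied to the IFS $(f_i)_{i=1}^N$, and in Case II applied to $(g_i)_{i=1}^M$). By taking the smaller exponent and the larger constant, we may assume a single pair $(\lambda,\eta)$ with $\eta>0$ for which (\ref{s7}) holds.

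With (a) and (b) in place, Lemma \ref{g2} applies and delivers $D_0(\mu)=d_0$ together with $\underline{Q}_0^{d_0}(\mu)>0$. No serious obstacle arises, since all the difficulty has been front-loaded into the proofs of Lemma \ref{pre1} and Lemma \ref{g2}; the only point deserving a line of care is making sure the exponent $\eta$ chosen for (\ref{s7}) is simultaneously valid for $\nu$ and $\mu$, but this is purely bookkeeping.
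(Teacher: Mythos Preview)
Your proof is correct and follows essentially the same approach as the paper: invoke (\ref{gl}) for $\nu$, apply Lemma \ref{pre1} for the decay estimate on $\mu$, and then conclude via Lemma \ref{g2}. You are in fact slightly more careful than the paper in explicitly checking the decay hypothesis (\ref{s7}) for $\nu$ as well (citing \cite[Proposition 5.1]{GL:04}), which the paper's one-line proof leaves implicit.
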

\begin{proof}
By (\ref{gl}), $D_0(\nu)=d_0$ and $\underline{Q}_0^{d_0}(\nu)>0$; thus the proposition follows by Lemmas \ref{pre1}, \ref{g2}.
\end{proof}
Let us end this section with the following simple observation:
\begin{lemma}\label{interim}
Let $(n_j)_{j=1}^\infty$ be a increasing sequence of positive integers satisfying $n_j\to\infty\;(j\to\infty)$ and $\sup_{j\geq 1}n_{j+1}/n_j\leq N$ for some constant $N$. Then we have
\[
\overline{Q}_0^{d_0}(\mu)<\infty\Leftrightarrow \overline{P}_0^{d_0}(\mu):=\limsup_{j\to\infty} n_j^{\frac{1}{d_0}}e_{n_j,0}(\mu)<\infty.
\]
\end{lemma}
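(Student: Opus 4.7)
The plan is to show both directions separately; the forward implication is essentially immediate, and the reverse implication follows from a routine sandwich argument that exploits the bounded-ratio hypothesis $n_{j+1}/n_j \leq N$ together with monotonicity of $e_{n,0}(\mu)$ in $n$.

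For the ``$\Rightarrow$'' direction, I would note that $(n_j^{1/d_0} e_{n_j,0}(\mu))_{j\geq 1}$ is a subsequence of $(n^{1/d_0} e_{n,0}(\mu))_{n\geq 1}$, so $\overline{P}_0^{d_0}(\mu) \leq \overline{Q}_0^{d_0}(\mu)$ and finiteness transfers.

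For the ``$\Leftarrow$'' direction, assume $\overline{P}_0^{d_0}(\mu) < \infty$. The key observation is that for every sufficiently large integer $n$ (specifically $n \geq n_1$) there is a unique index $j = j(n)$ with $n_j \leq n < n_{j+1}$, and $j(n) \to \infty$ as $n \to \infty$. Since $e_{n,0}(\mu)$ is nonincreasing in $n$ (one can always discard centers from an optimal set), I would write
\[
n^{1/d_0} e_{n,0}(\mu) \leq n_{j+1}^{1/d_0} e_{n_j,0}(\mu) \leq N^{1/d_0}\, n_j^{1/d_0} e_{n_j,0}(\mu).
\]
Taking $\limsup$ as $n \to \infty$ (hence $j \to \infty$) gives $\overline{Q}_0^{d_0}(\mu) \leq N^{1/d_0}\, \overline{P}_0^{d_0}(\mu) < \infty$, since the finitely many $n < n_1$ cannot affect the $\limsup$.

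There is no real obstacle here: the only two ingredients are the monotonicity $e_{n+1,0}(\mu) \leq e_{n,0}(\mu)$ and the ratio bound $n_{j+1} \leq N n_j$. The point of isolating this lemma is presumably that in Sections~4--5 one naturally constructs codebooks of size $n_j = \mathrm{card}(\Upsilon_j)$ indexed by a sequence of finite maximal antichains $\Upsilon_j$ whose cardinalities grow by a bounded multiplicative factor; the lemma then reduces the proof of $\overline{Q}_0^{d_0}(\mu) < \infty$ to controlling $e_{n_j,0}(\mu)$ only along this convenient subsequence.
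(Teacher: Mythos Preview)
Your proof is correct and follows essentially the same sandwich argument as the paper: for $n_j\leq n<n_{j+1}$ the paper writes the two-sided chain $N^{-1/d_0}n_{j+1}^{1/d_0}e_{n_{j+1},0}(\mu)\leq n^{1/d_0}e_{n,0}(\mu)\leq N^{1/d_0}n_j^{1/d_0}e_{n_j,0}(\mu)$ (invoking \cite[Theorem~2.5]{GL:04} for monotonicity of $e_{n,0}$), obtaining both inequalities at once, whereas you handle the forward implication by the cleaner subsequence observation and then prove the key reverse bound identically.
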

\begin{proof}
For each $n\geq n_1$, there exists a unique $j\geq 1$, such that $n_j\leq n<n_{j+1}$. Thus, by \cite[Theorem 2.5]{GL:04}, we have
\begin{eqnarray*}
N^{-\frac{1}{d_0}}n_{j+1}^{\frac{1}{d_0}}e_{n_{j+1},0}(\mu)&\leq& n_j^{\frac{1}{d_0}}e_{n_{j+1},0}(\mu)\leq n^{\frac{1}{d_0}}e_{n,0}(\mu)\\&\leq& n_{j+1}^{\frac{1}{d_0}}e_{n_j,0}(\mu)\leq N^{\frac{1}{d_0}}n_j^{\frac{1}{d_0}}e_{n_j,0}(\mu).
\end{eqnarray*}
Hence, we have, $N^{-\frac{1}{d_0}}\overline{P}_0^{d_0}(\mu)\leq\overline{Q}_0^{d_0}(\mu)\leq N^{\frac{1}{d_0}}\overline{P}_0^{d_0}(\mu)$. The lemma follows.
\end{proof}
\section{The upper quantization coefficient of ISMs in Case I}
In this section, $\mu$ always denotes an ISM in Case I. Recall that, in Case I, we have that $K=E$. Let $|A|$ denote the diameter of a set $A$. Without loss of generality, we assume that $|E|=1$. Then $|E_\sigma|=s_\sigma$ for every $\sigma\in\Omega^*$.  Set $\underline{t}:=\min_{1\leq i\leq N}t_i$. The following finite maximal antichains will be adequate for us to study the finiteness of the $d_0$-dimensional upper quantization coefficient:
\begin{eqnarray*}
\Lambda_j:=\{\sigma\in\Omega^*:t_{\sigma^-}\geq \underline{t}^j>t_\sigma\},\;\;j\geq 1.
\end{eqnarray*}
We denote by $\phi_j$ the cardinality of $\Lambda_j$. Define
\[
k_{1j}:=\min_{\sigma\in\Lambda_j}|\sigma|,\;\;k_{2j}:=\max_{\sigma\in\Lambda_j}|\sigma|,\;\;j\geq 1.
\]
\begin{lemma}
(i) For every $j\geq 1$, we have
\begin{eqnarray}\label{s8}
\underline{t}^{-j}\leq\phi_j\leq\underline{t}^{-(j+1)},\;\phi_j\leq\phi_{j+1}\leq \underline{t}^{-2}\phi_j.
\end{eqnarray}
(ii) There exists a constant $B_1$ such that
\begin{eqnarray}\label{s9}
j\leq k_{1j}\leq k_{2j}\leq B_1 j,\;\;j\geq 1.
\end{eqnarray}
\end{lemma}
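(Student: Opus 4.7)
The plan is to exploit three ingredients: (a) $\Lambda_j$ is a finite maximal antichain in $\Omega^*$, so $\sum_{\sigma\in\Lambda_j}t_\sigma=1$; (b) every $\sigma\in\Lambda_j$ satisfies the two-sided estimate $\underline{t}^{j+1}\leq t_\sigma<\underline{t}^j$, where the right inequality is immediate from the definition and the left follows from $t_\sigma=t_{\sigma^-}t_{\sigma_{|\sigma|}}\geq\underline{t}^j\cdot\underline{t}$; and (c) the crude product bounds $\underline{t}^{|\sigma|}\leq t_\sigma\leq\overline{t}^{|\sigma|}$, where $\overline{t}:=\max_{1\leq i\leq N}t_i<1$ (which holds since $N\geq 2$ and all $t_i>0$).

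For (i), the cardinality estimates fall out of (a) and (b): summing $t_\sigma<\underline{t}^j$ yields $1<\phi_j\underline{t}^j$, giving $\phi_j\geq\underline{t}^{-j}$; summing $t_\sigma\geq\underline{t}^{j+1}$ yields $\phi_j\underline{t}^{j+1}\leq 1$, giving $\phi_j\leq\underline{t}^{-(j+1)}$. For the monotonicity I would first verify that every $\omega\in\Lambda_{j+1}$ has a unique comparable partner $\sigma\in\Lambda_j$ and that necessarily $\sigma\preceq\omega$: a proper descendant $\sigma\succ\omega$ lying in $\Lambda_j$ would force $t_{\sigma^-}\leq t_\omega<\underline{t}^{j+1}<\underline{t}^j$, contradicting $t_{\sigma^-}\geq\underline{t}^j$. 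Consequently $\Lambda_{j+1}$ is partitioned by the blocks $B(\sigma):=\{\omega\in\Lambda_{j+1}:\sigma\preceq\omega\}$ as $\sigma$ ranges over $\Lambda_j$. Maximality of $\Lambda_{j+1}$ forces each $B(\sigma)$ to be nonempty, hence $\phi_j\leq\phi_{j+1}$; meanwhile the restricted mass identity $\sum_{\omega\in B(\sigma)}t_\omega=t_\sigma$, combined with the uniform lower bound $t_\omega\geq\underline{t}^{j+2}$ obtained as in (b), gives $|B(\sigma)|\leq t_\sigma/\underline{t}^{j+2}<\underline{t}^{-2}$, and summing over $\sigma$ yields $\phi_{j+1}\leq\underline{t}^{-2}\phi_j$.

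For (ii), the lower bound is immediate: with $|\sigma|=n$, (b) and (c) give $\underline{t}^n\leq t_\sigma<\underline{t}^j$, so $n>j$ and $k_{1j}\geq j$. For the upper bound, applying (c) to $\sigma^-$ gives $\overline{t}^{n-1}\geq t_{\sigma^-}\geq\underline{t}^j$; taking logarithms (both negative, since $\underline{t},\overline{t}\in(0,1)$) and dividing by $\log\overline{t}$ flips the inequality to $n-1\leq j\log\underline{t}/\log\overline{t}$. Setting $B_1:=1+\log\underline{t}/\log\overline{t}$ then yields $k_{2j}\leq B_1 j$ for all $j\geq 1$. I do not expect a substantive obstacle; the only delicate point is the partition step underlying the monotonicity of $\phi_j$, which is handled by the descendant-exclusion observation above.
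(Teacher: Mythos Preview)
Your proof is correct and matches the paper's approach for the cardinality bounds in (i) and for all of (ii). The one difference is in how you handle the second half of (i): the paper does not argue via the partition $\Lambda_{j+1}=\bigcup_{\sigma\in\Lambda_j}B(\sigma)$ at all, but simply observes that once $\underline{t}^{-j}\leq\phi_j\leq\underline{t}^{-(j+1)}$ is known for every $j$, both remaining inequalities are arithmetic consequences:
\[
\phi_j\leq\underline{t}^{-(j+1)}\leq\phi_{j+1},\qquad \phi_{j+1}\leq\underline{t}^{-(j+2)}=\underline{t}^{-2}\cdot\underline{t}^{-j}\leq\underline{t}^{-2}\phi_j.
\]
Your descendant-partition argument is sound (the exclusion of $\omega\precneqq\sigma$ and the restricted mass identity are both fine), and it is the more robust route if one later needs finer control on how $\Lambda_{j+1}$ refines $\Lambda_j$; but for the statement at hand it is more work than necessary.
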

\begin{proof}
(i) By the definition of $\Lambda_j$ , one can easily see
\begin{eqnarray*}
\phi_j \underline{t}^{j+1}\leq\sum_{\sigma\in\Lambda_j}t_\sigma=\sum_{\sigma\in\Lambda_j}\nu(E_\sigma)=1<\phi_j \underline{t}^j.
\end{eqnarray*}
Thus, (\ref{s8}) holds for all $j\geq 1$.

(ii) Choose arbitrarily $\sigma\in\Lambda_j\cap\Omega_{k_{1j}}$ and $\tau\in\Lambda_j\cap\Omega_{k_{2j}}$. We have
\[
\underline{t}^{k_{1j}}\leq t_\sigma<\underline{t}^j,\;\overline{t}^{k_{1j}}\geq t_\tau\geq\underline{t}^{j+1}.
\]
Hence, (\ref{s9}) is fulfilled for $B_1=2\log\underline{t}/\log\overline{t}$.
\end{proof}

For each $k\geq 1$, we define
\begin{eqnarray*}
d_k:=\frac{\sum_{\sigma\in\Omega_k}\mu(E_\sigma)\log t_\sigma}{\sum_{\sigma\in\Omega_k}\mu(E_\sigma)\log s_\sigma};\;\;\eta_k:=\frac{\sum_{\sigma\in\Lambda_k}\mu(E_\sigma)\log t_\sigma}{\sum_{\sigma\in\Lambda_k}\mu(E_\sigma)\log s_\sigma}.
\end{eqnarray*}
We will frequently use the following equality:
\begin{equation}\label{gg4}
\sum_{\sigma\in\Omega_k}p_\sigma\log t_\sigma=\sum_{h=1}^k\sum_{\sigma\in\Omega_k}\prod_{1\leq l\neq h\leq k}p_{\sigma_l}p_{\sigma_h}\log t_{\sigma_h}=k(1-p_0)^{k-1}\sum_{i=1}^Np_i\log t_i.
\end{equation}
\begin{lemma}\label{g0}
There exists a constant $C_1$ such that $|d_k-d_0|\leq C_1 k^{-1}$ for large $k$.
\end{lemma}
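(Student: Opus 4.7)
The plan is to express $d_k$ as a ratio of near-linear functions of $k$. Set
\begin{equation*}
A := \sum_{i=1}^N t_i\log t_i, \qquad B := \sum_{i=1}^N t_i\log s_i,
\end{equation*}
so $d_0 = A/B$, and write $A_k := \sum_{\sigma \in \Omega_k}\mu(E_\sigma)\log t_\sigma$, $B_k := \sum_{\sigma \in \Omega_k}\mu(E_\sigma)\log s_\sigma$, so $d_k = A_k/B_k$. I will prove that
\begin{equation*}
A_k = kA + O(1), \qquad B_k = kB + O(1),
\end{equation*}
with the $O(1)$ uniform in $k$. Since every $s_i<1$ and $t_i>0$ forces $B<0$, we have $|B_k| \geq |B|k/2$ for large $k$, and the identity $d_k - d_0 = (A_k B - AB_k)/(B_k B)$ has numerator of size $|(kA+O(1))B - A(kB+O(1))| = O(1)$ and denominator of size $\Theta(k)$, yielding $|d_k - d_0| \leq C_1/k$.

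Substituting formula (\ref{s12}) into $A_k$ and using the decomposition $\log t_\sigma = \log t_{\sigma|_h} + \log t_{\sigma^{(l)}_{-h}}$ together with the factorizations $\sum_{\rho\in\Omega_h} p_\rho = (1-p_0)^h$ and $\sum_{\tau\in\Omega_{k-h}} t_\tau = 1$, I obtain
\begin{equation*}
A_k = \sum_{\sigma \in \Omega_k} p_\sigma\log t_\sigma + \sum_{h=0}^{k-1}\Bigl[p_0\sum_{\rho\in\Omega_h}p_\rho\log t_\rho + p_0(1-p_0)^h(k-h)A\Bigr].
\end{equation*}
By (\ref{gg4}) the first sum equals $k(1-p_0)^{k-1}\sum_i p_i\log t_i = o(1)$, and $\sum_{\rho\in\Omega_h} p_\rho\log t_\rho = h(1-p_0)^{h-1}\sum_i p_i\log t_i$. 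Summing over $h$ and using convergence of $\sum_{h\geq 1} h(1-p_0)^{h-1} = 1/p_0^2$, this contribution is $O(1)$. The remaining term $p_0 A\sum_{h=0}^{k-1}(k-h)(1-p_0)^h$ is evaluated via the closed form
\begin{equation*}
\sum_{h=0}^{k-1}(k-h)q^h = \frac{k}{1-q} - \frac{q(1-q^k)}{(1-q)^2}, \qquad q := 1-p_0,
\end{equation*}
yielding $p_0 A \cdot (k/p_0 + O(1)) = kA + O(1)$. Hence $A_k = kA + O(1)$; the same argument with $\log s_i$ in place of $\log t_i$ (and the analogue $\sum_{\tau\in\Omega_m} t_\tau \log s_\tau = m\sum_i t_i \log s_i = mB$) gives $B_k = kB + O(1)$.

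The main obstacle is purely bookkeeping in the second paragraph: isolating the single linear-in-$k$ contribution from the double sum over $\sigma$ and $h$, and checking that every other term -- in particular the boundary index $h=0$, where $\sigma|_0 = \theta$ forces $\log t_{\sigma|_0} = 0$ -- is bounded uniformly in $k$. All such bounds reduce to convergence of $\sum_{h \geq 0} h(1-p_0)^{h-1}$ and $\sum_{h \geq 0} (1-p_0)^h$, which follows from $p_0 > 0$.
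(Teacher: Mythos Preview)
Your proof is correct and follows essentially the same approach as the paper's. Both arguments expand the numerator and denominator via formula (\ref{s12}), split $\log t_\sigma = \log t_{\sigma|_h} + \log t_{\sigma^{(l)}_{-h}}$, and isolate a linear-in-$k$ main term with a uniformly bounded remainder; the paper writes the main term as $k(1-(1-p_0)^k)u_0$ while you write it as $kA$, but since $k(1-p_0)^k$ is bounded these are equivalent up to $O(1)$, and the final estimate $|d_k-d_0|=|A_kB-AB_k|/|B_kB|=O(1/k)$ is obtained identically.
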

\begin{proof}
Note that $\sum_{\sigma\in\Omega_h}t_\sigma=1$ for $h\geq 1$. Hence, by (\ref{s12}) and (\ref{gg4}),
\begin{eqnarray*}
&&\sum_{\sigma\in\Omega_k}\mu(E_\sigma)\log t_\sigma=\sum_{\sigma\in\Omega_k}\sum_{h=0}^{k-1}p_0p_{\sigma|_h} t_{\sigma_{-h}^{(l)}}\log t_\sigma+\sum_{\sigma\in\Omega_k}p_\sigma\log t_\sigma\\
&&=\sum_{h=0}^{k-1}\sum_{\sigma\in\Omega_k}p_0p_{\sigma|_h} t_{\sigma_{-h}^{(l)}}(\log t_{\sigma|_h}+\log t_{\sigma_{-h}^{(l)}})+\sum_{\sigma\in\Omega_k}p_\sigma\sum_{h=1}^k\log t_{\sigma_h}
\\&&=\sum_{h=0}^{k-1}\sum_{\omega\in\Omega_h}\sum_{\tau\in\Omega_{k-h}} p_0p_\omega t_\tau(\log t_\omega+\log t_\tau)+k(1-p_0)^{k-1}\sum_{i=1}^Np_i\log t_i.
\end{eqnarray*}
Let $u_0:=\sum_{i=1}^Nt_i\log t_i$ and $l_0:=\sum_{i=1}^Nt_i\log s_i$. We further deduce
\begin{eqnarray*}
&&\sum_{\sigma\in\Omega_k}\mu(E_\sigma)\log t_\sigma=p_0\sum_{h=0}^{k-1}h(1-p_0)^{h-1}\sum_{i=1}^Np_i\log t_i\\&&\;\;+p_0\sum_{h=0}^{k-1}(1-p_0)^h(k-h)\sum_{i=1}^Nt_i\log t_i+k(1-p_0)^{k-1}\sum_{i=1}^Np_i\log t_i\\&&=p_0\sum_{h=0}^{k-1}h(1-p_0)^{h-1}\sum_{i=1}^Np_i\log t_i+k(1-(1-p_0)^k)u_0\\&&\;\;-p_0\sum_{h=0}^{k-1}h(1-p_0)^hu_0+k(1-p_0)^{k-1}\sum_{i=1}^Np_i\log t_i.
\end{eqnarray*}
Note that $\sum_{h=0}^\infty h(1-p_0)^{h-1}<\infty$ and $k(1-p_0)^{k-1}\to 0$ as $k\to\infty$. There exists a constant $A_1>0$, such that $|a_k|\leq A_1$, where
\[
a_k:=\sum_{\sigma\in\Omega_k}\mu(E_\sigma)\log t_\sigma-k(1-(1-p_0)^k)u_0.
\]
Analogously, there exists a constant $A_2>0$, such that $|b_k|\leq A_2$ for
\[
b_k:=\sum_{\sigma\in\Omega_k}\mu(E_\sigma)\log s_\sigma-k(1-(1-p_0)^k)l_0.
\]
We denote by $u_k$ and $l_k$ the numerator and denominator of $d_k$. Then
\begin{eqnarray*}
|d_k-d_0|&=&\bigg|\frac{u_k}{l_k}-\frac{u_0}{l_0}\bigg|=\bigg|\frac{k(1-(1-p_0)^k)u_0+a_k}{k
(1-(1-p_0)^k)l_0+b_k}-\frac{u_0}{l_0}\bigg|\\&=&\bigg|\frac{l_0a_k-u_0b_k}{l_0(k
(1-(1-p_0)^k)l_0+b_k)}\bigg|\leq\frac{|u_0+l_0|(A_1+A_2)}{|l_0(p_0kl_0+b_k)|}\\&\leq&\frac{2|u_0+l_0|(A_1+A_2)}{|l_0|^2p_0k}=:\frac{C_1}{k},\;\;{\rm for}\;\;k\geq 2A_2|l_0|^{-2}p_0^{-1};
\end{eqnarray*}
where $C_1:=2|u_0+l_0|(A_1+A_2)|l_0|^{-2}p_0^{-1}$. The lemma follows.
\end{proof}

In the following, we are going to estimate the convergence order of $(\eta_j)_{j=1}^N$. For this purpose, we need to establish a series of lemmas. Write
\begin{equation*}
\mu^{(1)}(\sigma):=\sum_{h=0}^{|\sigma|-1}p_0p_{\sigma|_h} t_{\sigma_{-h}^{(l)}},\;\sigma\in\Omega^*.
\end{equation*}
Then one can see that, for each $1\leq i\leq N$, we have
\begin{equation}\label{s13}
\mu^{(1)}(\sigma\ast i):=\mu^{(1)}(\sigma)t_i+p_0p_\sigma t_i;\;\;\mu(E_\sigma)=\mu^{(1)}(\sigma)+p_\sigma.
\end{equation}
\begin{lemma}\label{g1}
For every $\sigma\in\Omega^*$ and $h\geq 1$, we have
\begin{equation*}
\sum_{\omega\in\Gamma(\sigma,h)}\mu^{(1)}(\omega)=\mu^{(1)}(\sigma)+p_\sigma(1-p_0)(1-(1-p_0)^{h-1})+p_0p_\sigma.
\end{equation*}
\end{lemma}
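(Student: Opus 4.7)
The plan is to prove the identity by induction on $h \geq 1$, exploiting the recursion $\mu^{(1)}(\sigma \ast i) = \mu^{(1)}(\sigma) t_i + p_0 p_\sigma t_i$ from (\ref{s13}) together with the normalizations $\sum_{i=1}^N t_i = 1$ and $\sum_{i=1}^N p_i = 1 - p_0$. For the base case $h = 1$, I would sum the recursion over $i = 1, \ldots, N$, yielding $\sum_{i=1}^N \mu^{(1)}(\sigma \ast i) = \mu^{(1)}(\sigma) + p_0 p_\sigma$, which matches the RHS of the claimed formula at $h = 1$ (the middle term vanishes, since $1 - (1-p_0)^0 = 0$).

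For the induction step from $h$ to $h+1$, I would partition $\Gamma(\sigma, h+1) = \bigcup_{i=1}^N \Gamma(\sigma \ast i, h)$, apply the inductive hypothesis to each $\sigma \ast i$, and sum on $i$. The three groups of terms contribute, respectively, $\sum_{i=1}^N \mu^{(1)}(\sigma \ast i) = \mu^{(1)}(\sigma) + p_0 p_\sigma$ (from the base case), $p_\sigma (1-p_0)^2 (1-(1-p_0)^{h-1})$ (using $\sum_i p_i = 1-p_0$ applied to $p_{\sigma \ast i} = p_\sigma p_i$), and $p_0 p_\sigma (1-p_0)$. Matching this against the formula at level $h+1$ reduces to the one-line algebraic identity
\[
(1-p_0)\bigl(1 - (1-p_0)^{h-1}\bigr) + p_0 = 1 - (1-p_0)^h,
\]
which is immediate upon expansion.

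There is no real obstacle here; the calculation is essentially bookkeeping. As a sanity check I would also mention the direct (non-inductive) derivation: for $\omega = \sigma \ast \tau$ with $\tau \in \Omega_h$, splitting the sum defining $\mu^{(1)}(\omega)$ at the position $l = |\sigma|$ yields the clean decomposition
\[
\mu^{(1)}(\sigma \ast \tau) = t_\tau\, \mu^{(1)}(\sigma) + p_\sigma\, \mu^{(1)}(\tau),
\]
and then one computes
\[
\sum_{\tau \in \Omega_h} \mu^{(1)}(\tau) = \sum_{l=0}^{h-1} p_0 \Bigl(\sum_{\omega \in \Omega_l} p_\omega\Bigr)\Bigl(\sum_{\rho \in \Omega_{h-l}} t_\rho\Bigr) = \sum_{l=0}^{h-1} p_0 (1-p_0)^l = 1 - (1-p_0)^h,
\]
which, together with $\sum_\tau t_\tau = 1$, recovers the same final formula after the elementary rewriting $p_\sigma(1-(1-p_0)^h) = p_\sigma(1-p_0)(1-(1-p_0)^{h-1}) + p_0 p_\sigma$.
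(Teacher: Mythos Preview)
Your inductive proof is correct and is essentially the same as the paper's argument: both start from the one-step recursion (\ref{s13}) and the base case $h=1$, the only organizational difference being that the paper peels off the \emph{last} letter of $\omega$ to obtain the recurrence $\sum_{\omega\in\Gamma(\sigma,l)}\mu^{(1)}(\omega)=\sum_{\tau\in\Gamma(\sigma,l-1)}\mu^{(1)}(\tau)+p_0p_\sigma(1-p_0)^{l-1}$ and then telescopes, whereas you peel off the \emph{first} letter and induct. Your alternative direct computation via the decomposition $\mu^{(1)}(\sigma\ast\tau)=t_\tau\,\mu^{(1)}(\sigma)+p_\sigma\,\mu^{(1)}(\tau)$ does not appear in the paper and is arguably the cleanest route of the three.
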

\begin{proof}
For $h=1$, by (\ref{s13}), we have
\begin{eqnarray*}
\sum_{\omega\in\Gamma(\sigma,1)}\mu^{(1)}(\omega)=\sum_{i=1}^N\mu^{(1)}(\sigma\ast i)=\sum_{i=1}^N(t_i\mu^{(1)}(\sigma)+p_0t_ip_\sigma)=\mu^{(1)}(\sigma)+p_0p_\sigma.
\end{eqnarray*}
Thus, the lemma is true for $h=1$. Next we assume that $h\geq2$. For $l\geq 2$, by (\ref{s13}),
\begin{eqnarray*}
\sum_{\omega\in\Gamma(\sigma,l)}\mu^{(1)}(\omega)&=&\sum_{\tau\in\Omega_l}\mu^{(1)}(\sigma\ast\tau)=\sum_{\omega\in\Omega_{l-1}}\sum_{i=1}^N\mu^{(1)}(\sigma\ast\omega\ast i)\\&=&\sum_{\omega\in\Omega_{l-1}}\sum_{i=1}^N
(t_i\mu^{(1)}(\sigma\ast\omega)+p_0t_ip_{\sigma\ast\omega})\\&=&\sum_{\omega\in\Omega_{l-1}}
\mu^{(1)}(\sigma\ast\omega)+p_0p_\sigma\sum_{\omega\in\Omega_{l-1}}p_\omega\\&=&\sum_{\tau\in\Gamma(\sigma,l-1)}
\mu^{(1)}(\tau)+p_0p_\sigma(1-p_0)^{l-1}.
\end{eqnarray*}
From this, we conveniently obtain
\begin{eqnarray*}
\sum_{\omega\in\Gamma(\sigma,h)}\mu^{(1)}(\omega)-\mu^{(1)}(\sigma)&=&\sum_{l=2}^h\bigg(\sum_{\omega\in\Gamma(\sigma,l)}\mu^{(1)}(\omega)
-\sum_{\omega\in\Gamma(\sigma,l-1)}\mu^{(1)}(\omega)\bigg)\\&&\;\;+\bigg(\sum_{\omega\in\Gamma(\sigma,1)}\mu^{(1)}(\omega)-\mu^{(1)}(\sigma)\bigg)
\\&=&\sum_{l=2}^hp_0p_\sigma(1-p_0)^{l-1}+p_0p_\sigma\\&=&(1-p_0)(1-(1-p_0)^{h-1})p_\sigma+p_0p_\sigma.
\end{eqnarray*}
This completes the proof of the lemma.
\end{proof}
\begin{lemma}\label{g4}
There exists a constant $C_2>0$ such that
\begin{eqnarray}\label{s2}
\sup_{j\geq 1}\max\bigg\{\bigg|\sum_{\sigma\in\Lambda_j}p_\sigma\log t_\sigma\bigg|,\bigg|\sum_{\sigma\in\Lambda_j}p_\sigma\log s_\sigma\bigg|\bigg\}\leq C_2.
\end{eqnarray}
\end{lemma}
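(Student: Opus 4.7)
The plan is to exploit the exponential decay of $\sum_{\sigma\in\Omega_n}p_\sigma=(1-p_0)^n$ against only linear growth of $|\log t_\sigma|$, $|\log s_\sigma|$, and of the admissible lengths $|\sigma|$ for $\sigma\in\Lambda_j$. I proceed in three steps: pointwise bounds on the logarithmic factors, a geometric-series bound on the total $p$-mass of $\Lambda_j$, and a final combination.

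First I would establish size bounds on $\sigma\in\Lambda_j$. From the defining inequality $t_\sigma<\underline{t}^{\,j}$ together with the trivial lower bound $t_\sigma\geq\underline{t}^{\,|\sigma|}$ (which uses $\underline{t}<1$), one deduces $|\sigma|\geq j+1$; combined with \eqref{s9} this gives $j+1\leq|\sigma|\leq B_1 j$. Since $s_\sigma\geq\underline{s}^{\,|\sigma|}$ as well, it follows that
\[
|\log t_\sigma|\leq (j+1)|\log\underline{t}|,\qquad |\log s_\sigma|\leq B_1 j|\log\underline{s}|,
\]
for every $\sigma\in\Lambda_j$.

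Next I would control the total $p$-mass on $\Lambda_j$. Partitioning by length and using the elementary identity $\sum_{\sigma\in\Omega_n}p_\sigma=(p_1+\cdots+p_N)^n=(1-p_0)^n$, the first step of the argument gives $\Lambda_j\subset\bigcup_{n\geq j+1}\Omega_n$, so
\[
\sum_{\sigma\in\Lambda_j}p_\sigma\;\leq\;\sum_{n=j+1}^{\infty}(1-p_0)^n\;=\;\frac{(1-p_0)^{\,j+1}}{p_0}.
\]

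Finally I would combine: each of the two sums in \eqref{s2} is bounded by the product of a linear-in-$j$ factor and the geometric factor $(1-p_0)^{j+1}/p_0$. Because $1-p_0\in(0,1)$, the sequence $j(1-p_0)^{j}$ is bounded uniformly in $j\geq 1$, so taking $C_2$ to be the maximum of the resulting suprema yields the lemma. I do not expect any genuine obstacle here — the mechanism is simply that $p_0>0$ makes $(p_i)_{i=1}^N$ strictly sub-probabilistic, and this geometric decay easily absorbs the linear growth of $|\log t_\sigma|$ and $|\log s_\sigma|$ coming from \eqref{s9}.
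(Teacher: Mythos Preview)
Your argument is correct and follows the same overall strategy as the paper: bound the logarithmic factors linearly in $j$ and offset this against exponential decay of the total $p$-mass on $\Lambda_j$. The difference lies in how that $p$-mass bound is obtained. You simply overcount, using $\Lambda_j\subset\bigcup_{n\geq j+1}\Omega_n$ and summing the full levels to get $\sum_{\sigma\in\Lambda_j}p_\sigma\leq (1-p_0)^{j+1}/p_0$. The paper instead introduces the normalized weights $\xi(\sigma)=(1-p_0)^{-|\sigma|}p_\sigma$, observes that these are martingale-like along the tree (so $\sum_{\sigma\in\Lambda_j}\xi(\sigma)=1$ for any finite maximal antichain), and concludes the sharper estimate $\sum_{\sigma\in\Lambda_j}p_\sigma\leq (1-p_0)^{k_{1j}}$. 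Your route is more elementary and perfectly adequate here; the paper's route yields a tighter, reusable inequality (it is labelled \eqref{kk6} and invoked again in Lemmas~\ref{g13} and~\ref{s32}), which is the practical payoff of the extra idea.
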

\begin{proof}
First, we show that, for $j\geq 1$, we have
\begin{eqnarray}\label{kk6}
\sum_{\sigma\in\Lambda_j}p_\sigma\leq(1-p_0)^{k_{1j}}.
\end{eqnarray}
For this, we define, for each $k\geq 1$,
\begin{eqnarray*}
J_k:=\sum_{\sigma\in\Omega_k}p_\sigma=(1-p_0)^k;\;\;\xi(\sigma):=J_k^{-1}p_\sigma,\;\sigma\in\Omega_k.
\end{eqnarray*}
Then for each $h\geq 1$, we have
\begin{eqnarray*}
\sum_{\tau\in\Gamma(\sigma,h)}\xi(\tau)&=&\sum_{\tau\in\Gamma(\sigma,h)}J_{k+h}^{-1}p_\tau=J_{k+h}^{-1}\sum_{\omega\in\Omega_h}p_\sigma p_\omega\\&=&J_{k+h}^{-1}p_\sigma(1-p_0)^h=J_k^{-1}p_\sigma=\xi(\sigma).
\end{eqnarray*}
It follows that
\begin{eqnarray*}
\sum_{\sigma\in\Lambda_j}J_{|\sigma|}^{-1}p_\sigma=\sum_{\sigma\in\Lambda_j}\xi(\sigma)=
\sum_{\sigma\in\Lambda_j}\sum_{\tau\in\Gamma(\sigma,k_{2j}-|\sigma|)}\xi(\tau)=\sum_{\tau\in\Omega_{k_{2j}}}\xi(\tau)=1.
\end{eqnarray*}
Hence, $\sum_{\sigma\in\Lambda_j}p_\sigma\leq\max_{k_{1j}\leq k\leq k_{2j}}J_k=(1-p_0)^{k_{1j}}$. Using this and (\ref{s9}), we deduce
\begin{eqnarray*}
&&\max\bigg\{\bigg|\sum_{\sigma\in\Lambda_j}p_\sigma\log t_\sigma\bigg|,\bigg|\sum_{\sigma\in\Lambda_j}p_\sigma\log s_\sigma\bigg|\bigg\}\\&&=\max\bigg\{\sum_{\sigma\in\Lambda_j}|p_\sigma\log t_\sigma|,\sum_{\sigma\in\Lambda_j}|p_\sigma\log s_\sigma|\bigg\}\\&&\leq \sum_{\sigma\in\Lambda_j}p_\sigma\max\{\log\underline{t}^{-k_{2j}},\log\underline{s}^{-k_{2j}}\}\\&&\leq k_{2j}(1-p_0)^{k_{1j}}\max\{\log\underline{t}^{-1},\log\underline{s}^{-1}\}\\&&\leq B_1k_{1j}(1-p_0)^{k_{1j}}\max\{\log\underline{t}^{-1},\log\underline{s}^{-1}\}\to 0\;(j\to\infty).
\end{eqnarray*}
Thus, there exist a constant $C_2>0$ fulfilling (\ref{s2}). The lemma follows.
\end{proof}
\begin{lemma}\label{g13}
There exists a constant $C_3>0$ such that
\begin{eqnarray*}
\big|\sum_{\sigma\in\Lambda_j}\mu(E_\sigma)\log s_\sigma\big|\geq C_3 j\;\;{\rm for\;all\;large}\;j.
\end{eqnarray*}
\end{lemma}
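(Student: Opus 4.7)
The plan is to obtain the lower bound by a direct size estimate on the summands, bypassing the more delicate manipulations used in Lemmas \ref{g0} and \ref{g4}. Set $\overline{s}:=\max_{1\le i\le N}s_i<1$, so that $\log\overline{s}<0$. For every $\sigma\in\Omega^*$ we have $s_\sigma\le\overline{s}^{|\sigma|}$ and hence $\log s_\sigma\le|\sigma|\log\overline{s}$. Since $|\sigma|\ge k_{1j}\ge j$ for every $\sigma\in\Lambda_j$ by (\ref{s9}), and since $\log\overline{s}<0$, this upgrades to
\[
\log s_\sigma\;\le\;j\log\overline{s}\;<\;0\qquad\text{for every }\sigma\in\Lambda_j.
\]

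Next I need a lower bound on $\sum_{\sigma\in\Lambda_j}\mu(E_\sigma)$. Because $\Lambda_j$ is a finite maximal antichain in $\Omega^*$, every infinite word in $\Omega^{\mathbb{N}}$ has a (unique) predecessor in $\Lambda_j$; applying the canonical projection $\pi:\Omega^{\mathbb{N}}\to E$ gives the cover $E=\bigcup_{\sigma\in\Lambda_j}E_\sigma$. Since in Case I we have $\mathrm{supp}(\mu)=K=E$ and $\mu(E)=1$, finite subadditivity yields
\[
\sum_{\sigma\in\Lambda_j}\mu(E_\sigma)\;\ge\;\mu(E)\;=\;1.
\]

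Combining the two inequalities, and using that $\mu(E_\sigma)\ge 0$ so that multiplication preserves $\le$, I summate to obtain
\[
\sum_{\sigma\in\Lambda_j}\mu(E_\sigma)\log s_\sigma\;\le\;j\log\overline{s}\sum_{\sigma\in\Lambda_j}\mu(E_\sigma)\;\le\;j\log\overline{s},
\]
the last inequality holding because $j\log\overline{s}<0$ and the sum of measures is $\ge 1$. Taking absolute values (the left side is negative) gives $\big|\sum_{\sigma\in\Lambda_j}\mu(E_\sigma)\log s_\sigma\big|\ge j\,|\log\overline{s}|$, so the conclusion holds with $C_3:=-\log\overline{s}>0$, in fact for every $j\ge 1$ (no restriction to large $j$ is needed). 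There is essentially no main obstacle: the only point worth checking is that the cover argument gives $\sum\mu(E_\sigma)\ge 1$ without requiring the (true but more delicate) fact that $\mu$ charges no overlap, and the monotonicity of $\log$ together with $\log\overline{s}<0$ must be tracked carefully to keep the direction of each inequality correct.
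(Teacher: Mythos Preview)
Your proof is correct and is genuinely more direct than the paper's. The paper splits $\mu(E_\sigma)=\mu^{(1)}(\sigma)+p_\sigma$, discards the $p_\sigma\log s_\sigma$ terms, and then uses the identity $\sum_{\sigma\in\Lambda_j}\mu^{(1)}(\sigma)=1-\sum_{\sigma\in\Lambda_j}p_\sigma$ together with the bound $\sum_{\sigma\in\Lambda_j}p_\sigma\le(1-p_0)^{k_{1j}}$ from (\ref{kk6}); this yields the constant $C_3=p_0\log\overline{s}^{-1}$. You instead observe that the covering $E=\bigcup_{\sigma\in\Lambda_j}E_\sigma$ and subadditivity already give $\sum_{\sigma\in\Lambda_j}\mu(E_\sigma)\ge 1$, which combined with $\log s_\sigma\le j\log\overline{s}$ delivers $C_3=\log\overline{s}^{-1}$ --- a sharper constant, valid for all $j\ge 1$, and obtained without invoking the $\mu^{(1)}$ machinery or Lemma~\ref{g4}. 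The paper's route has the minor advantage of reusing the $\mu^{(1)}/p_\sigma$ decomposition and the estimate (\ref{kk6}) that are needed elsewhere in Section~4, but for this particular lemma your argument is cleaner.
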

\begin{proof}
By Lemma \ref{g11} and (\ref{s13}), we have
\begin{eqnarray*}
&&\sum_{\sigma\in\Lambda_j}\mu(E_\sigma)\log s_\sigma=\sum_{\sigma\in\Lambda_j}(\mu^{(1)}(\sigma)+p_\sigma)\log s_\sigma\leq\sum_{\sigma\in\Lambda_j}\mu^{(1)}(\sigma)\log s_\sigma\\&&\leq\sum_{\sigma\in\Lambda_j}\mu^{(1)}(\sigma)\log \overline{s}^{|\sigma|}\leq\sum_{\sigma\in\Lambda_j}\mu^{(1)}(\sigma)\log \overline{s}^{k_{1j}}=(1-\sum_{\sigma\in\Lambda_j}p_\sigma)\log \overline{s}^{k_{1j}}.
\end{eqnarray*}
Note that $\sum_{\sigma\in\Lambda_j}\mu(E_\sigma)\log s_\sigma<0$. Using (\ref{kk6}) and (\ref{s9}), we deduce
\begin{eqnarray*}
\big|\sum_{\sigma\in\Lambda_j}\mu(E_\sigma)\log s_\sigma\big|&=&\sum_{\sigma\in\Lambda_j}|\mu(E_\sigma)\log s_\sigma|\geq(1-\sum_{\sigma\in\Lambda_j}p_\sigma)|\log \overline{s}^{k_{1j}}|\\&\geq&(1-(1-p_0)^{k_{1j}})|\log \overline{s}^{k_{1j}}|\geq
p_0k_{1j}\log \overline{s}^{-1}\geq p_0j\log \overline{s}^{-1}.
\end{eqnarray*}
By setting $C_3:=p_0\log \overline{s}^{-1}$, the lemma follows.
\end{proof}

For a sequence $x_l\in\mathbb{R}, l\geq 1$, we take the convention that $\sum_{l=2}^1x_l:=0$. For every $\sigma\in\Omega^*$ and $h\geq 1$, we define
\begin{eqnarray*}
&&\Delta_1(\sigma, h):=u_0\sum_{l=2}^h\big(\mu^{(1)}(\sigma)+p_\sigma(1-p_0)(1-(1-p_0)^{l-2})+p_0p_\sigma\big)\\&&\;\;\;\;\;\;\;\;\;\;\;\;\;\;\;\;\;\;+u_0(\mu^{(1)}(\sigma)
+p_0p_\sigma)\\&&\;\;\;\;\;\;\;\;\;\;\;\;\;\;\;\;\;\;+p_\sigma\log t_\sigma (1-p_0)(1-(1-p_0)^{h-1})+p_0p_\sigma\log t_\sigma\\&&\;\;\;\;\;\;\;\;\;\;\;\;\;\;\;\;\;\;
+p_0p_\sigma\sum_{l=2}^h(l-1)(1-p_0)^{l-2}\sum_{i=1}^Np_i\log t_i\\&&\;\;\;\;\;\;\;\;\;\;\;\;\;\;\;\;\;\;+u_0p_\sigma(1-p_0)(1-(1-p_0)^{h-1}).
\end{eqnarray*}
We define $\Delta_2(\sigma, h)$ analogously by replacing $u_0,\log t_\sigma,\log t_i$, in the definition of $\Delta_1(\sigma, h)$ with $l_0,\log s_\sigma,\log s_i$.
The above two quantities enable us to describe the hereditary properties of "$\mu^{(1)}(\sigma)\log t_\sigma$" and "$\mu^{(1)}(\sigma)\log s_\sigma$" over the descendants of $\sigma$. We will use such hereditary properties to compare the numerator (denominator) of $\eta_j$ and that of $d_{k_{2j}}$, so that we are able to obtain the convergence order of $(\eta_j)_{j=1}^\infty$.
\begin{lemma}\label{s33}
For $\sigma\in\Omega^*$ and $h\geq 1$, we have
\begin{eqnarray*}
\sum_{\tau\in\Gamma(\sigma,h)}\mu^{(1)}(\tau)\log t_\tau=\mu^{(1)}(\sigma)\log t_\sigma+\Delta_1(\sigma, h);\\
\sum_{\tau\in\Gamma(\sigma,h)}\mu^{(1)}(\tau)\log s_\tau=\mu^{(1)}(\sigma)\log s_\sigma+\Delta_2(\sigma, h).
\end{eqnarray*}
\end{lemma}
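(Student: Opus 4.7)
The plan is to prove the first identity by induction on $h \geq 1$ for fixed $\sigma \in \Omega^*$; the second identity then follows by the same argument with $l_0, \log s_\sigma, \log s_i$ substituted for $u_0, \log t_\sigma, \log t_i$, since the recursion $\mu^{(1)}(\sigma \ast i) = t_i \mu^{(1)}(\sigma) + p_0 p_\sigma t_i$ from (\ref{s13}) uses only the similarity weights $t_i$ and is unaffected by this substitution. For the base case $h=1$, expanding via (\ref{s13}) and $\log t_{\sigma \ast i} = \log t_\sigma + \log t_i$, and using $\sum_{i=1}^N t_i = 1$ together with $u_0 = \sum_{i=1}^N t_i \log t_i$, one immediately obtains
\begin{equation*}
\sum_{i=1}^N \mu^{(1)}(\sigma\ast i)\log t_{\sigma\ast i}
= \mu^{(1)}(\sigma)\log t_\sigma + u_0\mu^{(1)}(\sigma) + p_0 p_\sigma \log t_\sigma + u_0 p_0 p_\sigma.
\end{equation*}
Direct inspection of the definition shows that $\Delta_1(\sigma,1) = u_0(\mu^{(1)}(\sigma) + p_0 p_\sigma) + p_0 p_\sigma \log t_\sigma$, because the two $\sum_{l=2}^1$ sums vanish by the convention declared before the lemma, and the factor $1 - (1-p_0)^0 = 0$ kills the remaining $p_\sigma \log t_\sigma$ and $u_0 p_\sigma$ contributions; this matches the displayed expansion.

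For the inductive step, decompose $\Gamma(\sigma, h+1) = \{\tau\ast i : \tau \in \Gamma(\sigma, h),\ 1 \le i \le N\}$ and apply the base-case identity at each $\tau \in \Gamma(\sigma, h)$ to get
\begin{align*}
\sum_{\tau' \in \Gamma(\sigma, h+1)} \mu^{(1)}(\tau') \log t_{\tau'}
&= \sum_{\tau \in \Gamma(\sigma, h)} \mu^{(1)}(\tau)\log t_\tau + u_0 \sum_{\tau \in \Gamma(\sigma,h)} \mu^{(1)}(\tau) \\
&\quad + p_0 \sum_{\tau \in \Gamma(\sigma,h)} p_\tau \log t_\tau + u_0 p_0 \sum_{\tau \in \Gamma(\sigma,h)} p_\tau.
\end{align*}
The first sum equals $\mu^{(1)}(\sigma)\log t_\sigma + \Delta_1(\sigma,h)$ by the inductive hypothesis; the second is evaluated by Lemma \ref{g1}; the fourth equals $p_\sigma(1-p_0)^h$ by multiplicativity of $p_\tau$; and the third equals $p_\sigma (1-p_0)^h \log t_\sigma + h p_\sigma (1-p_0)^{h-1}\sum_{i=1}^N p_i \log t_i$ by the interchange-of-summations argument already used for (\ref{gg4}). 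It remains to verify that $\Delta_1(\sigma, h+1) - \Delta_1(\sigma, h)$ reassembles exactly these three correction contributions. The key algebraic collapse is $(1-p_0)^{h-1} - (1-p_0)^h = p_0(1-p_0)^{h-1}$, which converts the jumps in the $p_\sigma \log t_\sigma$-line and the $u_0 p_\sigma(1-p_0)(1-(1-p_0)^{h-1})$-line of the definition of $\Delta_1$ into the required factors $p_0 p_\sigma (1-p_0)^h \log t_\sigma$ and $u_0 p_0 p_\sigma (1-p_0)^h$; the new $l = h+1$ bracket in the first line of $\Delta_1$ is recognized via Lemma \ref{g1} as precisely $\sum_{\tau \in \Gamma(\sigma,h)} \mu^{(1)}(\tau)$, and the new $l = h+1$ summand in the $\sum_{l=2}^h (l-1)(1-p_0)^{l-2}$ line supplies the factor $h(1-p_0)^{h-1}$ of the $\sum_i p_i \log t_i$ piece.

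The main obstacle is the bookkeeping of this telescoping match: the five lines in the definition of $\Delta_1(\sigma, h)$ have been engineered so that their increments from $h$ to $h+1$ correspond bijectively to the three correction sums produced by iterating the base case (with the $p_0 \sum_\tau p_\tau \log t_\tau$ correction splitting into two pieces, which absorb the $p_\sigma \log t_\sigma$-line and the $p_0 p_\sigma \sum_l (l-1)(1-p_0)^{l-2}\sum_i p_i \log t_i$-line respectively). Once this pairing is set up carefully, no further ideas are needed, and the $\log s$-version requires only the substitution noted at the outset.
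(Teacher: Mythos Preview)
Your proposal is correct and follows essentially the same approach as the paper: both arguments rest on the one-step recursion derived from (\ref{s13}), Lemma~\ref{g1} for the $\sum\mu^{(1)}$ terms, and the (\ref{gg4})-style evaluation of $\sum_{\omega}p_\omega\log t_\omega$. The only difference is presentational---the paper telescopes the level-$l$ differences and evaluates the resulting sums in closed form, whereas you package the same computation as an induction on $h$ by matching the increment $\Delta_1(\sigma,h+1)-\Delta_1(\sigma,h)$.
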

\begin{proof}
For $h=1$, by (\ref{s13}), we have
\begin{eqnarray}\label{s1}
&&\sum_{\tau\in\Gamma(\sigma,1)}\mu^{(1)}(\tau)\log t_\tau=\sum_{i=1}^N(t_i\mu^{(1)}(\sigma)+p_0t_ip_\sigma)\log t_{\sigma\ast i}\\&&\;\;=\mu^{(1)}(\sigma)\log t_\sigma+\mu^{(1)}(\sigma)\sum_{i=1}^Nt_i\log t_i+
p_0p_\sigma\log t_\sigma+p_0p_\sigma\sum_{i=1}^Nt_i\log t_i\nonumber\\&&\;\;=\mu^{(1)}(\sigma)\log t_\sigma+\mu^{(1)}(\sigma)u_0+
p_0p_\sigma\log t_\sigma+p_0p_\sigma u_0.
\end{eqnarray}
Thus, the lemma is true for $h=1$. Next we assume that $h\geq2$. For every $l\geq 2$,
\begin{eqnarray*}
&&\sum_{\tau\in\Gamma(\sigma,l)}\mu^{(1)}(\tau)\log t_\tau=\sum_{\omega\in\Gamma(\sigma,l-1)}\sum_{i=1}^N(t_i\mu^{(1)}(\omega)+p_0t_ip_\omega)\log t_{\omega\ast i}\\&&=\sum_{\omega\in\Gamma(\sigma,l-1)}\mu^{(1)}(\omega)\log t_\omega+u_0\sum_{\omega\in\Gamma(\sigma,l-1)}\mu^{(1)}(\omega)\\&&\;\;+\sum_{\omega\in\Gamma(\sigma,l-1)}p_0p_\omega\log t_\omega+u_0\sum_{\omega\in\Gamma(\sigma,l-1)}p_0p_\omega.
\end{eqnarray*}
Applying this equality to every $2\leq l\leq h$ and using (\ref{s1}), we deduce
\begin{eqnarray}\label{ss4}
&&\sum_{\tau\in\Gamma(\sigma,h)}\mu^{(1)}(\tau)\log t_\tau\nonumber\\&&\;\;=\sum_{l=2}^h\bigg(\sum_{\omega\in\Gamma(\sigma,l)}\mu^{(1)}(\omega)\log t_\omega
-\sum_{\omega\in\Gamma(\sigma,l-1)}\mu^{(1)}(\omega)\log t_\omega\bigg)\nonumber\\&&\;\;\;\;+\bigg(\sum_{\omega\in\Gamma(\sigma,1)}\mu^{(1)}(\omega)\log t_\omega-\mu^{(1)}(\sigma)\log t_\sigma\bigg)+\mu^{(1)}(\sigma)\log t_\sigma\nonumber\\&&
=u_0\sum_{l=2}^h\sum_{\omega\in\Gamma(\sigma,l-1)}\mu^{(1)}(\omega)
+\sum_{l=2}^h\sum_{\omega\in\Gamma(\sigma,l-1)}p_0p_\omega\log t_\omega+u_0\sum_{l=2}^h\sum_{\omega\in\Gamma(\sigma,l-1)}p_0p_\omega\nonumber\\&&\;\;\;\;+
(\mu^{(1)}(\sigma)u_0+p_0p_\sigma\log t_\sigma+p_0p_\sigma u_0)+\mu^{(1)}(\sigma)\log t_\sigma.
\end{eqnarray}
By Lemma \ref{g1}, for $2\leq l\leq h$, we have
\begin{eqnarray}\label{ss6}
\sum_{\omega\in\Gamma(\sigma,l-1)}\mu^{(1)}(\omega)=\big(\mu^{(1)}(\sigma)+p_\sigma(1-p_0)(1-(1-p_0)^{l-2})+p_0p_\sigma\big).
\end{eqnarray}
Using (\ref{gg4}) and the fact that $\sum_{\sigma\in\Omega_h}p_\sigma=(1-p_0)^h$, we easily see
\begin{eqnarray}\label{ss5}
&&\sum_{l=2}^h\sum_{\omega\in\Gamma(\sigma,l-1)}p_0p_\omega\log t_\omega=\sum_{l=2}^h\sum_{\rho\in\Omega_{l-1}}p_0p_\sigma p_\rho(\log t_\sigma+\log t_\rho)\nonumber\\&&\;\;=p_0p_\sigma \log t_\sigma\sum_{l=2}^h\sum_{\rho\in\Omega_{l-1}}p_\rho+p_0p_\sigma \sum_{l=2}^h\sum_{\rho\in\Omega_{l-1}}p_\rho\log t_\rho\nonumber\\&&\;\;=p_0p_\sigma \log t_\sigma\sum_{l=2}^h(1-p_0)^{l-1}+p_0p_\sigma\sum_{l=2}^h(l-1)(1-p_0)^{l-2}\sum_{i=1}^Np_i\log t_i\nonumber\\&&\;\;=p_\sigma \log t_\sigma(1-p_0)(1-(1-p_0)^{h-1})+p_0p_\sigma\sum_{l=2}^h(l-1)(1-p_0)^{l-2}\sum_{i=1}^Np_i\log t_i;
\\&&\;\;
\sum_{l=2}^h\sum_{\omega\in\Gamma(\sigma,l-1)}p_0p_\omega=\sum_{l=2}^h\sum_{\rho\in\Omega_{l-1}}p_0p_\sigma p_\rho=(1-p_0)(1-(1-p_0)^{h-1})p_\sigma\label{ss7}.
\end{eqnarray}
By (\ref{ss4})-(\ref{ss7}), the first equality follows. Analogously, one can show the second.
\end{proof}

Corresponding to the summands in the definitions of $\Delta_1(\sigma, h)$ and $\Delta_2(\sigma, h)$, we define the following quantities which will appear in the expression of $d_{k_{2j}}$:
\begin{eqnarray*}
&&I_0:=\sum_{\sigma\in\Lambda_j\setminus\Omega_{k_{2j}}}\bigg(\sum_{l=2}^{k_{2j}-|\sigma|}\big(\mu^{(1)}(\sigma)
+p_\sigma(1-p_0)(1-(1-p_0)^{l-2})+p_0p_\sigma\big)\\&&\;\;\;\;\;\;\;\;\;\;\;\;\;\;\;\;\;\;\;\;\;\;\;\;\;\;\;\;\;\;\;\;\;\;\;\;
+\mu^{(1)}(\sigma)+p_0p_\sigma\bigg);
\nonumber\\&&I_1:=\sum_{\sigma\in\Lambda_j\setminus\Omega_{k_{2j}}}\bigg(p_\sigma\log t_\sigma (1-p_0)(1-(1-p_0)^{k_{2j}-|\sigma|-1})+p_0p_\sigma\log t_\sigma\bigg);\label{s3'}\\
&&I_2:=\sum_{\sigma\in\Lambda_j\setminus\Omega_{k_{2j}}}p_0p_\sigma\sum_{l=2}^{k_{2j}-|\sigma|}(l-1)(1-p_0)^{l-2}\sum_{i=1}^Np_i\log t_i;\label{s3}\\
&&I_3:=\sum_{\sigma\in\Lambda_j\setminus\Omega_{k_{2j}}}p_\sigma(1-p_0)(1-(1-p_0)^{k_{2j}-|\sigma|-1}).\nonumber
\end{eqnarray*}
We define $\widetilde{I}_1$ by replacing $\log t_\sigma$ in the definition of $I_1$ with $\log s_\sigma$ and define $\widetilde{I}_2$ by replacing $\log t_i$ in the definition of $I_2$ with $\log s_i$. Set
\[
B_2:=\max\bigg\{\sum_{i=1}^N|p_i\log t_i|,\sum_{i=1}^N|p_i\log s_i|\bigg\}\cdot\sum_{l=2}^\infty(l-1)(1-p_0)^{l-2}.
\]
Clearly, $B_2<\infty$. We have
\begin{lemma}\label{s32}
For $I_i,0\leq i\leq 4$, we have
\begin{eqnarray*}
|I_0|\leq B_1j;\;\max\{|I_1|,|\widetilde{I}_1|\}\leq C_2;\;\max\{|I_2|,|\widetilde{I}_2|\}\leq B_2;\;|I_3|\leq 1.
\end{eqnarray*}
\end{lemma}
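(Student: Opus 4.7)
The plan is to bound each of $I_0,I_1,\widetilde{I}_1,I_2,\widetilde{I}_2,I_3$ by elementary comparisons, using four ingredients already at hand: (i) the length bound $k_{2j}\le B_1 j$ from (\ref{s9}); (ii) the antichain identity $\sum_{\sigma\in\Lambda_j}\mu(E_\sigma)=1$; (iii) the estimate $\sum_{\sigma\in\Lambda_j}p_\sigma\le(1-p_0)^{k_{1j}}\le 1$ obtained in the proof of Lemma \ref{g4}; and (iv) Lemma \ref{g4} itself. The identity (ii) is easily justified by observing from (\ref{s13}) that refining an antichain at a single word $\sigma_0$ (replacing $\sigma_0$ by $\{\sigma_0\ast i\}_{i=1}^N$) leaves $\sum\mu(E_\sigma)$ unchanged, together with the direct computation $\sum_{\sigma\in\Omega_k}\mu(E_\sigma)=1$ coming from Lemma \ref{g11} and the normalisation $\sum_{\sigma\in\Omega_k}t_\sigma=1$.

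For $I_0$, the key observation is that $p_\sigma(1-p_0)(1-(1-p_0)^{l-2})+p_0p_\sigma\le p_\sigma(1-p_0)+p_0p_\sigma=p_\sigma$, so every inner summand is bounded above by $\mu^{(1)}(\sigma)+p_\sigma=\mu(E_\sigma)$; the trailing term $\mu^{(1)}(\sigma)+p_0p_\sigma$ is dominated by the same quantity. Since the inner sum has at most $k_{2j}-|\sigma|-1$ terms, the $\sigma$-summand is at most $(k_{2j}-|\sigma|)\mu(E_\sigma)\le B_1 j\,\mu(E_\sigma)$, and summing over $\Lambda_j$ via (ii) yields $|I_0|\le B_1 j$.

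For $I_1$ and $\widetilde{I}_1$, the coefficient $(1-p_0)(1-(1-p_0)^{k_{2j}-|\sigma|-1})+p_0$ lies in $[p_0,1]$, hence
\[
|I_1|\le\sum_{\sigma\in\Lambda_j}|p_\sigma\log t_\sigma|\le C_2,\qquad |\widetilde{I}_1|\le\sum_{\sigma\in\Lambda_j}|p_\sigma\log s_\sigma|\le C_2,
\]
by Lemma \ref{g4}. For $I_2$ and $\widetilde{I}_2$, bound the finite inner sum by $\sum_{l=2}^\infty(l-1)(1-p_0)^{l-2}$ and factor out $|\sum_i p_i\log t_i|$ or $|\sum_i p_i\log s_i|$; using (iii) in the form $p_0\sum_{\sigma\in\Lambda_j}p_\sigma\le 1$, the definition of $B_2$ gives $|I_2|,|\widetilde{I}_2|\le B_2$. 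Finally $|I_3|\le(1-p_0)\sum_{\sigma\in\Lambda_j}p_\sigma\le 1$ by (iii).

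The only conceptual step is the antichain identity (ii) used in the $I_0$ bound; the rest are termwise comparisons against sums already controlled by Lemma \ref{g4} and (\ref{kk6}). I do not expect any step to be a serious obstacle, and the resulting constants match those appearing in the statement.
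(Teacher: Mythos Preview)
Your proposal is correct and follows essentially the same approach as the paper: bound each inner summand of $I_0$ by $\mu(E_\sigma)$, count at most $k_{2j}-|\sigma|\le B_1 j$ terms and use $\sum_{\sigma\in\Lambda_j}\mu(E_\sigma)=1$; then handle $I_1,\widetilde{I}_1$ via the coefficient bound $\le 1$ and Lemma~\ref{g4}, $I_2,\widetilde{I}_2$ via the definition of $B_2$ and (\ref{kk6}), and $I_3$ directly from (\ref{kk6}). Your coefficient bound $(1-p_0)(1-(1-p_0)^{k_{2j}-|\sigma|-1})+p_0\le 1$ is in fact cleaner than the paper's stated factor $(1-p_0^2)$.
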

\begin{proof}
By the definition of $I_0$, we have
\begin{eqnarray*}
|I_0|&\leq& \sum_{\sigma\in\Lambda_j}\bigg(\sum_{l=2}^{k_{2j}-|\sigma|}\big(\mu^{(1)}(\sigma)+p_\sigma\big)+(\mu^{(1)}(\sigma)+p_\sigma)\bigg)
\\&\leq&\sum_{\sigma\in\Lambda_j}(k_{2j}-k_{1j})\mu(E_\sigma)
=(k_{2j}-k_{1j})\sum_{\sigma\in\Lambda_j}\mu(E_\sigma)\leq
B_1j.
\end{eqnarray*}
By (\ref{s3'}) and Lemma \ref{g4}, we have
\begin{eqnarray*}
\max\{|I_1|,|\widetilde{I}_1|\}\leq(1-p_0^2)\max\big\{\sum_{\sigma\in\Lambda_j}|p_\sigma\log t_\sigma|,\sum_{\sigma\in\Lambda_j}|p_\sigma\log s_\sigma|\big\}\leq C_2.
\end{eqnarray*}
Also, by the definitions of $I_2,\widetilde{I}_2$, we have
\begin{eqnarray*}
\max\{|I_2|,|\widetilde{I}_2|\}\leq B_2\sum_{\sigma\in\Lambda_j}p_0p_\sigma\leq B_2(1-p_0)^{k_{1j}}\leq B_2.
\end{eqnarray*}
Finally, by (\ref{kk6}) and the definition of $I_3$, we conclude
\begin{eqnarray*}
|I_3|\leq(1-p_0)\sum_{\sigma\in\Lambda_j}p_\sigma\leq (1-p_0)^{k_{1j}+1}\leq 1.
\end{eqnarray*}
This complete the proof of the lemma.
\end{proof}

\begin{lemma}\label{g3}
There exists constant $C_4>0$ such that $|\eta_j-d_0|\leq C_4 j^{-1}$.
\end{lemma}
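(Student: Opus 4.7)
The plan is to control $\eta_j - d_0$ by comparing it to $d_{k_{2j}} - d_0$, which by Lemma~\ref{g0} is already of order $1/j$. Writing $u_{(j)}, l_{(j)}$ for the numerator and denominator of $\eta_j$ and $u_{k_{2j}}, l_{k_{2j}}$ for those of $d_{k_{2j}}$, the natural quantity to estimate is $u_{(j)} l_0 - u_0 l_{(j)}$, since
\[
\eta_j - d_0 \;=\; \frac{u_{(j)} l_0 - u_0 l_{(j)}}{l_0\,l_{(j)}},
\]
and Lemma~\ref{g13} gives $|l_{(j)}| \geq C_3 j$ for large $j$. Thus it suffices to prove $|u_{(j)} l_0 - u_0 l_{(j)}| = O(1)$.

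Every word in $\Omega_{k_{2j}}$ has a unique ancestor in the maximal antichain $\Lambda_j$. Partitioning $u_{k_{2j}} = \sum_{\sigma \in \Omega_{k_{2j}}}(\mu^{(1)}(\sigma)+p_\sigma)\log t_\sigma$ according to this ancestor, applying Lemma~\ref{s33} to the descendants beneath each $\rho \in \Lambda_j \setminus \Omega_{k_{2j}}$, and comparing with $u_{(j)}$, I obtain
\[
u_{k_{2j}} - u_{(j)} \;=:\; R_1 \;=\; u_0 I_0 + I_1 + I_2 + u_0 I_3 + P - Q,
\]
where $I_0,\ldots,I_3$ are the quantities introduced before Lemma~\ref{s32}, $P := \sum_{\sigma \in \Omega_{k_{2j}}} p_\sigma \log t_\sigma$, and $Q := \sum_{\sigma \in \Lambda_j} p_\sigma \log t_\sigma$. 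The parallel identity $l_{k_{2j}} - l_{(j)} =: R_2 = l_0 I_0 + \widetilde{I}_1 + \widetilde{I}_2 + l_0 I_3 + \widetilde{P} - \widetilde{Q}$ follows from the second half of Lemma~\ref{s33}, with obvious analogues $\widetilde{P}, \widetilde{Q}$.

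The crucial observation is that in the cross-combination $R_1 l_0 - R_2 u_0$ the $I_0$ and $I_3$ contributions, each individually of order $j$, cancel \emph{exactly}, leaving
\[
R_1 l_0 - R_2 u_0 \;=\; l_0(I_1 + I_2 + P - Q) - u_0(\widetilde{I}_1 + \widetilde{I}_2 + \widetilde{P} - \widetilde{Q}).
\]
The right-hand side is $O(1)$ by Lemma~\ref{s32} (for $I_1, \widetilde{I}_1, I_2, \widetilde{I}_2$), identity~(\ref{gg4}) applied to $P, \widetilde{P}$, and Lemma~\ref{g4} applied to $Q, \widetilde{Q}$. Combining with the estimate $u_{k_{2j}} l_0 - u_0 l_{k_{2j}} = l_0 a_{k_{2j}} - u_0 b_{k_{2j}} = O(1)$ extracted from the proof of Lemma~\ref{g0}, I conclude
\[
|u_{(j)} l_0 - u_0 l_{(j)}| \;=\; \bigl|(u_{k_{2j}} l_0 - u_0 l_{k_{2j}}) - (R_1 l_0 - R_2 u_0)\bigr| \;=\; O(1),
\]
and feeding this into the identity above with Lemma~\ref{g13} yields $|\eta_j - d_0| \leq C_4 / j$ for a suitable constant $C_4$.

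The main obstacle is the second step: verifying that the coefficient of $I_0$ in $R_1$ is precisely $u_0$ and that of $I_0$ in $R_2$ is precisely $l_0$ (and likewise for $I_3$), so that the $j$-sized contributions really annihilate in $R_1 l_0 - R_2 u_0$. This requires tracing all six summands in the definition of $\Delta_1$ and checking term-by-term that $\Delta_2$ is obtained from $\Delta_1$ only by the swap $u_0 \leftrightarrow l_0$, $\log t \leftrightarrow \log s$; any mismatch between the two hereditary decompositions would leave residual $O(j)$ terms in the numerator and destroy the $O(1/j)$ bound.
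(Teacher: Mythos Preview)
Your proposal is correct and follows essentially the same route as the paper: partition $\Omega_{k_{2j}}$ by ancestors in $\Lambda_j$, apply Lemma~\ref{s33} to express $u_{k_{2j}},l_{k_{2j}}$ in terms of $u_{(j)},l_{(j)}$ plus the $I_i$-quantities, and exploit the exact cancellation of the $I_0$ (and $I_3$) contributions in the cross-product $R_1 l_0 - R_2 u_0$ together with Lemmas~\ref{g4}, \ref{g13}, \ref{s32} and~\ref{g0}. Your presentation is in fact slightly cleaner than the paper's, which carries the quotient $d_{k_{2j}}$ through and must bound the denominator $|y_j + I_0 l_0 + I_7|$ from both sides, whereas you work with cross-products throughout and avoid that detour.
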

\begin{proof}
Let $x_j,y_j$ denote the numerator and denominator of $\eta_j$.
Note that
\[
\Omega_{k_{2j}}=(\Lambda_j\cap\Omega_{k_{2j}})\cup\big(\bigcup_{\sigma\in\Lambda_j\setminus\Omega_{k_{2j}}}\Gamma(\sigma,k_{2j}-|\sigma|)\big).
\]
By Lemma \ref{s33}, we deduce
\begin{eqnarray*}
I_4:&=&\sum_{\tau\in\Omega_{k_{2j}}}\mu^{(1)}(\tau)\log t_\tau
\\&=&\sum_{\sigma\in\Lambda_j\setminus\Omega_{k_{2j}}}\sum_{\tau\in\Gamma(\sigma,k_{2j}-|\sigma|)}\mu^{(1)}(\tau)\log t_\tau+\sum_{\sigma\in\Lambda_j\cap\Omega_{k_{2j}}}\mu^{(1)}(\sigma)\log t_\sigma\\&=&\sum_{\sigma\in\Lambda_j\setminus\Omega_{k_{2j}}}\big(\mu^{(1)}(\sigma)\log t_\sigma+\Delta_1(\sigma, k_{2j}-|\sigma|)\big)+\sum_{\sigma\in\Lambda_j\cap\Omega_{k_{2j}}}\mu^{(1)}(\sigma)\log t_\sigma\\&=&\sum_{\sigma\in\Lambda_j}\mu^{(1)}(\sigma)\log t_\sigma+I_0u_0+I_1+I_2+I_3u_0\\&=&\big(x_j-\sum_{\sigma\in\Lambda_j}p_\sigma\log t_\sigma\big)+I_0u_0+I_1+I_2+I_3u_0.
\end{eqnarray*}
In an analogous manner, we have
\begin{eqnarray*}
I_5:=\sum_{\tau\in\Omega_{k_{2j}}}\mu^{(1)}(\tau)\log s_\tau
=y_j-\sum_{\sigma\in\Lambda_j}p_\sigma\log s_\sigma+I_0l_0+\widetilde{I}_1+\widetilde{I}_2+I_3l_0.
\end{eqnarray*}
For convenience, we write
\begin{eqnarray*}
&&I_6:=-\sum_{\sigma\in\Lambda_j}p_\sigma\log t_\sigma+I_1+I_2+I_3u_0+\sum_{\sigma\in\Omega_{k_{2j}}}p_\sigma\log t_\sigma;\\
&&I_7:=-\sum_{\sigma\in\Lambda_j}p_\sigma\log s_\sigma+\widetilde{I}_1+\widetilde{I}_2+I_3l_0+\sum_{\sigma\in\Omega_{k_{2j}}}p_\sigma\log s_\sigma.
\end{eqnarray*}
By Lemmas \ref{s2}, \ref{s32}, we know that $I_6,I_7$ are both bounded. Hence, there exists a constant $A_3>0$ such that $|I_7|,|I_6l_0-I_7u_0|\leq A_3$.
By (\ref{s13}), we have
\begin{eqnarray}
d_{k_{2j}}=\frac{I_4+\sum_{\tau\in\Omega_{k_{2j}}}p_\tau\log t_\tau}{I_5+\sum_{\tau\in\Omega_{k_{2j}}}p_\tau\log s_\tau}\label{zz5}=\frac{x_j+I_0u_0+I_6}{y_j+I_0l_0+I_7}\label{ss1}.
\end{eqnarray}
By Lemmas \ref{s2}, \ref{s32}, we have
\begin{eqnarray*}
|y_j|\geq C_3 j,\;\;|I_0|\leq B_1j.
\end{eqnarray*}
This implies that, for $j\geq A_3$, we have
\[
|y_j+I_0l_0+I_7|\leq(C_3+B_1|l_0|+1)j\leq (C_3+B_1|l_0|+1)C_3^{-1}|y_j|=:B_3|y_j|.
\]
Note that $y_j<0$ and $I_0l_0<0$. Hence, for $j\geq 2A_3/C_3$, we have
\[
|y_j+I_0l_0+I_7|\geq|y_j+I_0l_0|-A_3\geq|y_j|-A_3\geq 2^{-1}C_3j.
\]
From this and Lemma \ref{g0}, we deduce
\begin{eqnarray*}
&&\frac{C_1}{B_1 j}\geq\frac{C_1}{k_{2j}}\geq|d_{k_{2j}}-d_0|=\bigg|\frac{x_j+I_0u_0+I_6}{y_j+I_0l_0+I_7}-\frac{u_0}{l_0}\bigg|\\&&=\bigg|\frac{x_j l_0-y_j u_0+I_6l_0-I_7u_0}{(y_j+I_0l_0+I_7)l_0}\bigg|\geq\bigg|\frac{x_j l_0-y_j u_0}{B_3y_j l_0}\bigg|-\bigg|\frac{2A_3}{C_3j l_0}\bigg|.
\end{eqnarray*}
Thus, there exists a constant $C_4>0$ such that, for all large $j$, we have
\begin{eqnarray*}
|\eta_j-d_0|=\bigg|\frac{x_j}{y_j}-\frac{u_0}{l_0}\bigg|=\bigg|\frac{x_j l_0-y_j u_0}{y_j l_0}\bigg|\leq C_4 j^{-1}.
\end{eqnarray*}
This completes the proof of the lemma.
\end{proof}

\emph{Proof of Theorem \ref{mthm} for ISMs in Case I}

By proposition \ref{g12}, it suffices to show that $\overline{Q}_0^{d_0}(\mu)<\infty$.
For each $j\geq 1$ and every $\sigma\in\Lambda_j$, we choose an arbitrary point $a_\sigma\in E_\sigma$. Then
\[
\hat{e}_{\phi_j}(\mu)\leq\sum_{\sigma\in\Lambda_j}\int_{E_\sigma}\log d(x,a_\sigma)d\mu(x)\leq\sum_{\sigma\in\Lambda_j}\mu(E_\sigma)\log s_\sigma.
\]
Using this, (\ref{s8}) and (\ref{s9}), we further deduce
\begin{eqnarray*}
&&\frac{1}{d_0}\log\phi_j+\hat{e}_{\phi_j}(\mu)\leq\frac{1}{d_0}\log\phi_j+\sum_{\sigma\in\Lambda_j}\mu(E_\sigma)\log s_\sigma\\&&=\frac{1}{d_0}\log\phi_j+\frac{1}{\eta_j}\sum_{\sigma\in\Lambda_j}\mu(E_\sigma)\log t_\sigma\leq\frac{1}{d_0}\log\underline{t}^{-(j+1)}+\frac{1}{\eta_j}\log \underline{t}^{k_{1j}}\\&&\leq\frac{1}{d_0}\log\underline{t}^{-(j+1)}+\frac{1}{\eta_j}\log \underline{t}^j\\&&=d_0^{-1}\log\underline{t}^{-1}+j(d_0^{-1}-\eta_j^{-1})\log\underline{t}^{-1}
\\&&=d_0^{-1}\log\underline{t}^{-1}+j(d_0\eta_j)^{-1}(\eta_j-d_0)\log\underline{t}^{-1}.
\end{eqnarray*}
Not that $\eta_j\to d_0$ as $j\to\infty$. Thus, for large $j$, by Lemma \ref{g3}, we have
\begin{eqnarray*}
d_0^{-1}\log\phi_j+\hat{e}_{\phi_j}(\mu)&\leq& d_0^{-1}\log\underline{t}^{-1}+2jd_0^{-2}(\eta_j-d_0)\log\underline{t}^{-1}\\
&\leq&d_0^{-1}\log\underline{t}^{-1}+2d_0^{-2}C_4\log\underline{t}^{-1}.
\end{eqnarray*}
By this and Lemma \ref{interim}, we conclude that $\overline{Q}_0^{d_0}(\mu)<\infty$. The theorem follows.

\section{The upper quantization coefficient of ISMs in Case II}
In this section, $\mu$ denotes an ISM in Case II. Without loss of generality, we assume that $|K|=1$. Then
\[
|C|\leq|K|=1;\;\; |f_\sigma(K)|=s_\sigma,\;\;|f_\sigma(C_\rho)|=s_\sigma c_\rho|C|,\;\;\sigma\in\Omega^*,\;\rho\in\Phi^*.
\]
Let $\underline{p}:=\min_{1\leq i\leq N}p_i$ and $\underline{t}:=\min_{1\leq i\leq M}t_i$.
For every $j\geq 1$, we define
\begin{eqnarray}
&&\underline{q}:=\min\{\underline{p},\underline{t}\};\;\;\Gamma_j:=\{\sigma\in\Omega^*:p_{\sigma^-}\geq\underline{q}^j>p_\sigma\};\label{zz1}\\&&\psi_j:={\rm card}(\Gamma_j);\;\;
l_{1j}:=\min_{\sigma\in\Gamma_j}|\sigma|,\;l_{2j}:=\max_{\sigma\in\Gamma_j}|\sigma|.\label{zz2}
\end{eqnarray}
Set $\overline{p}:=\max\{\max_{1\leq i\leq N}p_i,\max_{1\leq i\leq M}t_i\}$. As we did for (\ref{s9}), one can easily see
\begin{eqnarray}\label{s16}
j\leq l_{1j}\leq l_{2j}\leq 2\log\underline{q}\;(\log\overline{p})^{-1}j.
\end{eqnarray}
\begin{remark}\label{gg3}
For every $0\leq h\leq l_{1j}-1$ and $\sigma\in\Omega_h$, we have, $p_\sigma\geq\underline{q}^j$, otherwise, $\min_{\sigma\in\Gamma_j}|\sigma|<l_{1j}$, a contradiction. Also, every $\sigma\in \Lambda_{\Gamma_j}^*$ has a proper descendant $\tau\in\Gamma_j$. This implies that $p_\sigma\geq p_{\tau^-}\geq\underline{q}^j$. For every $j\geq1$, we write
\[
\Psi_j:=\bigg(\bigcup_{h=0}^{l_{1j}-1}\Omega_h\bigg)\cup\Lambda_{\Gamma_j}^*.
\]
\end{remark}
For every $\sigma\in\Psi_j$, by Remark \ref{gg3}, we may define
\begin{eqnarray}\label{zz3}
\Gamma_j(\sigma):=\{\rho\in\Phi^*:p_\sigma t_{\rho^-}\geq\underline{q}^j>p_\sigma t_\rho\};\;\;\psi_j(\sigma):={\rm card}(\Gamma_j(\sigma)).
\end{eqnarray}
 With $\psi_j$ as defined in  (\ref{zz2}), we write
\begin{eqnarray}\label{zz6}
M_j:=\psi_j+\sum_{\sigma\in\Psi_j}\psi_j(\sigma).
\end{eqnarray}
\begin{lemma}\label{g9}
There exist constants $N_1,N_2>0$ such that
\[
p_0\underline{q}^{-j}\leq M_j\leq N_1\underline{q}^{-j};\;M_j\leq M_{j+1}\leq N_2M_j;\;\;j\geq 1.
\]
\end{lemma}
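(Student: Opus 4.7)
My plan is to reduce all three estimates to a single mass identity obtained by iterating the self-similar relations for $\mu$ and $\nu$ along the antichains involved. Iterating (\ref{s55}) along $\Gamma_j$ yields
\[
\mu = p_0 \sum_{\sigma \in \Psi_j} p_\sigma \, \nu \circ f_\sigma^{-1} + \sum_{\sigma \in \Gamma_j} p_\sigma \, \mu \circ f_\sigma^{-1},
\]
and then I would further substitute $\nu = \sum_{\rho \in \Gamma_j(\sigma)} t_\rho \, \nu \circ g_\rho^{-1}$, which is valid since $\Gamma_j(\sigma)$ is a finite maximal antichain in $\Phi^*$ (using $p_\sigma \geq \underline{q}^j$ from Remark \ref{gg3}). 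Taking total masses and invoking Lemma \ref{g112}(ii) produces the identity
\[
\sum_{\sigma \in \Gamma_j} p_\sigma \;+\; p_0 \sum_{\sigma \in \Psi_j} \sum_{\rho \in \Gamma_j(\sigma)} p_\sigma t_\rho \;=\; 1.
\]

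For the two-sided bound on $M_j$, I would plug into this identity the uniform range $\underline{q}^{j+1} \leq p_\sigma < \underline{q}^j$ for $\sigma \in \Gamma_j$ (using $p_\sigma = p_{\sigma^-}\, p_{\sigma_{|\sigma|}} \geq \underline{q}^j \cdot \underline{p} \geq \underline{q}^{j+1}$) together with the analogous range $\underline{q}^{j+1} \leq p_\sigma t_\rho < \underline{q}^j$. The pointwise upper estimates give $1 < \underline{q}^j\big(\psi_j + p_0 \sum_\sigma \psi_j(\sigma)\big) \leq \underline{q}^j M_j$, whence $M_j > \underline{q}^{-j} \geq p_0 \underline{q}^{-j}$. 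The pointwise lower estimates give $\underline{q}^{j+1}\big(\psi_j + p_0 \sum_\sigma \psi_j(\sigma)\big) \leq 1$, so $\psi_j \leq \underline{q}^{-(j+1)}$ and $\sum_\sigma \psi_j(\sigma) \leq p_0^{-1} \underline{q}^{-(j+1)}$; adding yields $M_j \leq N_1 \underline{q}^{-j}$ with $N_1 := (1 + p_0^{-1})\underline{q}^{-1}$. The ratio bound $M_{j+1} \leq N_2 M_j$ is then immediate by combining the upper bound on $M_{j+1}$ with the lower bound on $M_j$, taking $N_2 := N_1 (p_0 \underline{q})^{-1}$.

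The monotonicity $M_j \leq M_{j+1}$ is the least mechanical step. I would prove it by exhibiting the cell collection at level $j+1$ as a refinement of that at level $j$. Each $\sigma \in \Gamma_j$ either satisfies $p_\sigma < \underline{q}^{j+1}$, in which case $\sigma \in \Gamma_{j+1}$ unchanged, or satisfies $\underline{q}^{j+1} \leq p_\sigma < \underline{q}^j$, in which case $\sigma$ has proper descendants in $\Gamma_{j+1}$ and so $\sigma \in \Lambda_{\Gamma_{j+1}}^* \subset \Psi_{j+1}$; in the latter case the $K$-cell $f_\sigma(K)$ is replaced in the level $j+1$ decomposition by the cells $\{f_\sigma(g_\rho(C)) : \rho \in \Gamma_{j+1}(\sigma)\}$ together with the $K$-cells indexed by descendants of $\sigma$ that lie in $\Gamma_{j+1}$. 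A parallel dichotomy applies to each mixed cell indexed by $(\sigma,\rho)$. Since every subdivision produces at least as many replacement cells as it consumes, $M_j$ is non-decreasing.

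The main obstacle I anticipate is the combinatorial bookkeeping for the monotonicity step: the two cell types (those indexed by $\Gamma_j$ and those indexed by pairs $(\sigma,\rho)$ with $\sigma \in \Psi_j$, $\rho \in \Gamma_j(\sigma)$) can morph into each other across refinements, so one must verify carefully that every parent cell at level $j$ gives rise to at least one descendant cell at level $j+1$. The quantitative estimates on $M_j$ and the ratio bound are otherwise routine consequences of the mass identity.
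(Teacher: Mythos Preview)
Your argument is correct and rests on the same core idea as the paper's proof: control $M_j$ through the weighted mass
\[
Q_j=\sum_{\sigma\in\Gamma_j}p_\sigma+p_0\sum_{\sigma\in\Psi_j}\sum_{\rho\in\Gamma_j(\sigma)}p_\sigma t_\rho
\]
combined with the pointwise bounds $\underline{q}^{j+1}\le p_\sigma,\,p_\sigma t_\rho<\underline{q}^j$. Two differences are worth recording. First, by iterating (\ref{s55}) along the maximal antichain $\Gamma_j$ you obtain the exact identity $Q_j=1$ (note that $\Psi_j$ is precisely the set of proper prefixes of words in $\Gamma_j$, so the iteration is legitimate); the paper instead bounds $Q_j$ crudely by enlarging $\Lambda_{\Gamma_j}^*$ to all of $\bigcup_{h=l_{1j}}^{l_{2j}-1}\Omega_h$, obtaining only $p_0\le Q_j\le 2-p_0$. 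Your route is cleaner and yields slightly different but equally valid constants. Second, the paper's proof does not address the monotonicity $M_j\le M_{j+1}$ at all, extracting only the ratio bound $M_{j+1}\le N_2M_j$ from the two-sided estimate; your refinement argument fills this gap and can be made rigorous as an injection from level-$j$ cells to level-$j+1$ cells (K-cells map to K-cells via descent in $\Gamma_{j+1}$, C-cells map to C-cells with the same first index via descent in $\Gamma_{j+1}(\sigma)$, and the antichain property prevents collisions). A minor remark: taking total mass of the iterated measure identity already gives $Q_j=1$ directly, so the appeal to Lemma~\ref{g112}(ii) is not needed at that step.
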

\begin{proof}
For every $j\geq 1$, we write
\begin{eqnarray*}
Q_j:=\sum_{h=0}^{l_{1j}-1}\sum_{\sigma\in\Omega_h}\sum_{\rho\in\Gamma_j(\sigma)}p_0p_\sigma t_\rho+\sum_{\sigma\in\Lambda_{\Gamma_j}^*}\sum_{\rho\in\Gamma_j(\sigma)}p_0p_\sigma t_\rho+\sum_{\sigma\in\Gamma_j}p_\sigma
\end{eqnarray*}
Note that $\sum_{\rho\in\Gamma_j(\sigma)}t_\rho=1$ and $\Lambda_{\Gamma_j}^*\subset\bigcup_{h=l_{1j}}^{l_{2j}-1}\Omega_h$. We deduce
\begin{eqnarray*}
Q_j&=&\sum_{h=0}^{l_{1j}-1}\sum_{\sigma\in\Omega_h}p_0p_\sigma+\sum_{\sigma\in\Lambda_{\Gamma_j}^*}p_0p_\sigma
+\sum_{\sigma\in\Gamma_j}p_\sigma\\&\leq&\sum_{h=0}^{l_{1j}-1}\sum_{\sigma\in\Omega_h}p_0p_\sigma
+\sum_{h=l_{1j}}^{l_{2j}-1}\sum_{\sigma\in\Omega_h}p_0p_\sigma+
\sum_{\sigma\in\Gamma_j}p_\sigma\\&\leq&\sum_{h=0}^{l_{1j}-1}p_0(1-p_0)^h+\sum_{h=l_{1j}}^{l_{2j}-1}p_0(1-p_0)^h
+(1-p_0)^{l_{1j}}\;\;{\rm by}\; (\ref{kk6})\\&\leq&1-(1-p_0)^{l_{2j}}+(1-p_0)^{l_{1j}}\leq 2-p_0.
\end{eqnarray*}
In addition, we have $Q_j\geq\sum_{h=0}^{l_{1j}-1}\sum_{\sigma\in\Omega_h}p_0p_\sigma\geq p_0$.
By (\ref{zz1}) and (\ref{zz3}), we deduce
\begin{eqnarray*}
M_j p_0\underline{q}^{j+1}\leq Q_j\leq M_j\underline{q}^j.
\end{eqnarray*}
Combing the above analysis, we have
\[
M_j p_0\underline{q}^{j+1}\leq Q_j\leq2-p_0; p_0\leq Q_j\leq M_j\underline{q}^j.
\]
Hence, the lemma follows by setting
\[
N_1:=p_0^{-1}\underline{q}^{-1}(2-p_0),\; N_2:=N_1p_0^{-1}\underline{q}^{-1}.
\]\end{proof}
For convenience, we write
\begin{eqnarray*}
&&T_j(1):=\sum_{h=0}^{l_{1j}-1}\sum_{\sigma\in\Omega_h}\sum_{\rho\in\Gamma_j(\sigma)}p_0p_\sigma t_\rho\log(p_0p_\sigma t_\rho);\\
&&T_j(2):=\sum_{\sigma\in\Lambda_{\Gamma_j}^*}\sum_{\rho\in\Gamma_j(\sigma)}p_0p_\sigma t_\rho\log(p_0p_\sigma t_\rho);\;\;
T_j(3):=\sum_{\sigma\in\Gamma_j}p_\sigma\log p_\sigma.
\end{eqnarray*}
Analogously, for every $j\geq 1$, we write
\begin{eqnarray*}
&&R_j(1):=\sum_{h=0}^{l_{1j}-1}\sum_{\sigma\in\Omega_h}\sum_{\rho\in\Gamma_j(\sigma)}p_0p_\sigma t_\rho\log(s_\sigma c_\rho|C|);\\
&&R_j(2):=\sum_{\sigma\in\Lambda_{\Gamma_j}^*}\sum_{\rho\in\Gamma_j(\sigma)}p_0p_\sigma t_\rho\log(s_\sigma c_\rho|C|);\;\;
R_j(3):=\sum_{\sigma\in\Gamma_j}p_\sigma\log s_\sigma.
\end{eqnarray*}
In order to estimate the upper quantization coefficient, we need to consider the convergence order of the following sequence which is connected with the geometric mean error of $\mu$ (see (\ref{s10})):
\[
\xi_j:=\frac{T_j(1)+T_j(2)+T_j(3)}{R_j(1)+R_j(2)+R_j(3)},\;\;j\geq 1.
\]
The main idea is to compare the numerator and denominator of $\xi_j$ and those of $d_0$.
With Lemmas \ref{g5}-\ref{g7}, we subtract from $T_j(h),R_j(h),h=1,2,3$, the summands that are relevant to $d_0$ and control the corresponding differences. By doing so, we will finally be able to estimate the difference between $\xi_j$ and $d_0$ (see Lemma \ref{g8}).
\begin{lemma}\label{g5}
There exists a constant $C_5$ such that $|e_j|,|\widetilde{e}_j|\leq C_5$, where
\begin{eqnarray*}
e_j:=T_j(1)-\sum_{h=0}^{l_{1j}-1}\sum_{\sigma\in\Omega_h}\sum_{\rho\in\Gamma_j(\sigma)}p_0p_\sigma t_\rho\log t_\rho;\\
\widetilde{e}_j:=R_j(1)-\sum_{h=0}^{l_{1j}-1}\sum_{\sigma\in\Omega_h}\sum_{\rho\in\Gamma_j(\sigma)}p_0p_\sigma t_\rho\log c_\rho.
\end{eqnarray*}
\end{lemma}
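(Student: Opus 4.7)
The plan is to exploit the fact that $\Gamma_j(\sigma)$ is a finite maximal antichain in $\Phi^*$, so that $\sum_{\rho \in \Gamma_j(\sigma)} t_\rho = 1$. Subtracting the $\log t_\rho$ term from $\log(p_0 p_\sigma t_\rho)$ inside $T_j(1)$ gives $\log(p_0 p_\sigma)$, which no longer depends on $\rho$. The inner $\rho$-sum then collapses, and similarly $\log(s_\sigma c_\rho |C|) - \log c_\rho = \log(s_\sigma|C|)$ collapses the inner sum for $\widetilde{e}_j$. After these cancellations, I expect
\[
e_j = \sum_{h=0}^{l_{1j}-1}\sum_{\sigma\in\Omega_h} p_0 p_\sigma \log(p_0 p_\sigma), \qquad \widetilde{e}_j = \sum_{h=0}^{l_{1j}-1}\sum_{\sigma\in\Omega_h} p_0 p_\sigma \log(s_\sigma|C|).
\]

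Next I would split $\log(p_0 p_\sigma) = \log p_0 + \log p_\sigma$ and $\log(s_\sigma|C|) = \log|C| + \log s_\sigma$. The constant-factor pieces contribute $\log p_0 \cdot \sum_{h \geq 0} p_0(1-p_0)^h$ and $\log|C| \cdot \sum_{h \geq 0} p_0(1-p_0)^h$, both bounded by $|\log p_0|$ and $|\log|C||$ respectively, uniformly in $j$. For the remaining pieces, I would apply the same symmetrization trick used to derive (\ref{gg4}): for any real numbers $(a_i)_{i=1}^N$,
\[
\sum_{\sigma \in \Omega_h} p_\sigma \sum_{l=1}^h a_{\sigma_l} = h(1-p_0)^{h-1}\sum_{i=1}^N p_i a_i,
\]
applied with $a_i = \log p_i$ and $a_i = \log s_i$. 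This yields
\[
\sum_{\sigma\in\Omega_h} p_\sigma \log p_\sigma = h(1-p_0)^{h-1} \sum_{i=1}^N p_i \log p_i,
\]
and analogously for $\log s_\sigma$.

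With these identities in hand, the remaining parts of $e_j$ and $\widetilde{e}_j$ become $p_0 \big(\sum_{i} p_i \log p_i\big) \sum_{h=0}^{l_{1j}-1} h(1-p_0)^{h-1}$ and the analogous expression with $\log s_i$. Since $p_0 > 0$, the series $\sum_{h=0}^\infty h(1-p_0)^{h-1}$ converges, so its partial sums are bounded uniformly in $j$. Taking $C_5$ to be the maximum of these four absolute bounds (the two constant-factor pieces and the two convergent geometric-type series, multiplied by $|\sum_i p_i \log p_i|$ and $|\sum_i p_i \log s_i|$) closes the argument.

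The only mild technical issue I anticipate is keeping the bookkeeping straight when $l_{1j}$ may equal $0$ or $1$ (in which case the sum in $h$ is empty or has just the $h=0$ term $p_0\log(p_0) + p_0\log|C|$); those boundary cases give a trivially bounded contribution and are absorbed into the constant. No deep estimate is needed — the lemma is essentially a bookkeeping reduction via the antichain identity $\sum_{\rho \in \Gamma_j(\sigma)} t_\rho = 1$ and the closed-form evaluation (\ref{gg4}).
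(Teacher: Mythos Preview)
Your proposal is correct and follows essentially the same route as the paper: collapse the $\rho$-sum via $\sum_{\rho\in\Gamma_j(\sigma)}t_\rho=1$, split $\log(p_0p_\sigma)$ (resp.\ $\log(s_\sigma|C|)$) into its two summands, and bound the resulting partial sums using $\sum_{h}p_0(1-p_0)^h\le 1$ together with the identity (\ref{gg4}) and the convergence of $\sum_h h(1-p_0)^{h-1}$. The only cosmetic slip is that $C_5$ should be taken as the \emph{sum} (not the maximum) of the two relevant bounds for each of $|e_j|$ and $|\widetilde{e}_j|$, since each quantity is a sum of two pieces.
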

\begin{proof}
Note that $\sum_{\rho\in\Gamma_j(\sigma)}t_\rho=1$. We have
\begin{eqnarray*}
T_j(1,a):&=&\sum_{h=0}^{l_{1j}-1}\sum_{\sigma\in\Omega_h}\sum_{\rho\in\Gamma_j(\sigma)}p_0p_\sigma t_\rho\log p_0=\sum_{h=0}^{l_{1j}-1}\sum_{\sigma\in\Omega_h}p_0p_\sigma \log p_0\\&=&p_0\log p_0\sum_{h=0}^{l_{1j}-1}\sum_{\sigma\in\Omega_h}p_\sigma=p_0\log p_0\sum_{h=0}^{l_{1j}-1}(1-p_0)^h\\&=&(1-(1-p_0)^{l_{1j}})\log p_0.
\end{eqnarray*}
In a similar manner, we deduce
\begin{eqnarray*}
T_j(1,b):&=&\sum_{h=0}^{l_{1j}-1}\sum_{\sigma\in\Omega_h}\sum_{\rho\in\Gamma_j(\sigma)}p_0p_\sigma t_\rho\log p_\sigma=\sum_{h=0}^{l_{1j}-1}\sum_{\sigma\in\Omega_h}p_0p_\sigma \log p_\sigma\\&=&p_0\sum_{h=0}^{l_{1j}-1}\sum_{\sigma\in\Omega_h}p_\sigma\log p_\sigma=p_0\sum_{h=0}^{l_{1j}-1}h(1-p_0)^{h-1}\sum_{i=1}^Np_i\log p_i\\&=&p_0\sum_{i=1}^Np_i\log p_i \sum_{h=0}^{l_{1j}-1}h(1-p_0)^{h-1}.
\end{eqnarray*}
Since $\sum_{h=0}^\infty h(1-p_0)^{h-1}<\infty$, there exists a constant $C_5(1)>0$ such that
$$
|e_j|=|T_j(1,a)+T_j(1,a)|=|T_j(1,a)|+|T_j(1,a)|\leq C_5(1).
$$
Analogously, one can show that, $|\widetilde{e}_j|\leq C_5(2)$ for some constant $C_5(2)>0$. The lemma follows by setting $C_5:=C_5(1)+C_5(2)$.
\end{proof}
\begin{lemma}\label{g6}
There exists a constant $C_6$ such that $|\beta_j|,|\widetilde{\beta}_j|\leq C_6$, where
\begin{eqnarray*}
\beta_j:=T_j(2)-\sum_{\sigma\in\Lambda_{\Gamma_j}^*}\sum_{\rho\in\Gamma_j(\sigma)}p_0p_\sigma t_\rho\log t_\rho;\\
\widetilde{\beta}_j:=R_j(2)-\sum_{\sigma\in\Lambda_{\Gamma_j}^*}\sum_{\rho\in\Gamma_j(\sigma)}p_0p_\sigma t_\rho\log c_\rho.
\end{eqnarray*}
\end{lemma}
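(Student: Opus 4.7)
The plan is to proceed exactly in parallel with the proof of Lemma \ref{g5}, exploiting the explicit form of $\log(p_0p_\sigma t_\rho)$ and summing out $\rho$ using $\sum_{\rho\in\Gamma_j(\sigma)}t_\rho=1$. More precisely, decomposing $\log(p_0p_\sigma t_\rho)=\log p_0+\log p_\sigma+\log t_\rho$ and subtracting off the $\log t_\rho$ contribution, I would obtain
\[
\beta_j=p_0\log p_0\sum_{\sigma\in\Lambda_{\Gamma_j}^*}p_\sigma+p_0\sum_{\sigma\in\Lambda_{\Gamma_j}^*}p_\sigma\log p_\sigma,
\]
and analogously, using $\log(s_\sigma c_\rho|C|)=\log|C|+\log s_\sigma+\log c_\rho$,
\[
\widetilde{\beta}_j=p_0\log|C|\sum_{\sigma\in\Lambda_{\Gamma_j}^*}p_\sigma+p_0\sum_{\sigma\in\Lambda_{\Gamma_j}^*}p_\sigma\log s_\sigma.
\]
So the task reduces to uniformly bounding the three sums $\sum_{\sigma\in\Lambda_{\Gamma_j}^*}p_\sigma$, $\sum_{\sigma\in\Lambda_{\Gamma_j}^*}p_\sigma\log p_\sigma$, and $\sum_{\sigma\in\Lambda_{\Gamma_j}^*}p_\sigma\log s_\sigma$.

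The key observation I would use is the one already invoked in the proof of Lemma \ref{g9}, namely $\Lambda_{\Gamma_j}^*\subset\bigcup_{h=l_{1j}}^{l_{2j}-1}\Omega_h$. Together with the level identity $\sum_{\sigma\in\Omega_h}p_\sigma=(1-p_0)^h$, this gives
\[
\sum_{\sigma\in\Lambda_{\Gamma_j}^*}p_\sigma\leq\sum_{h=l_{1j}}^{l_{2j}-1}(1-p_0)^h\leq p_0^{-1}(1-p_0)^{l_{1j}}\leq p_0^{-1},
\]
which takes care of the first sum in each of $\beta_j,\widetilde{\beta}_j$.

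For the remaining two logarithmic sums, I would simply use the crude bounds $|\log p_\sigma|\leq|\sigma|\log\underline{p}^{-1}\leq l_{2j}\log\underline{p}^{-1}$ and $|\log s_\sigma|\leq l_{2j}\log\underline{s}^{-1}$, combined with the above estimate, to obtain
\[
\Bigl|\sum_{\sigma\in\Lambda_{\Gamma_j}^*}p_\sigma\log p_\sigma\Bigr|\leq l_{2j}\log\underline{p}^{-1}\cdot p_0^{-1}(1-p_0)^{l_{1j}},
\]
and similarly for the $\log s_\sigma$ version. Using (\ref{s16}) to bound $l_{2j}\leq Bj$ together with $l_{1j}\geq j$, the right side is of the form $Cj(1-p_0)^j$, which is uniformly bounded (indeed tends to zero) in $j$. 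This yields the desired uniform bound, and the constant $C_6$ can be taken to be the sum of the three bounds obtained for $|\beta_j|$ and the corresponding three bounds for $|\widetilde{\beta}_j|$.

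There is no serious obstacle here; the main point is simply to notice that $\Lambda_{\Gamma_j}^*$, although not itself an antichain, is confined to the finitely many levels $l_{1j}\leq h<l_{2j}$, so the level-sum $(1-p_0)^h$ dominates every logarithmic factor by the exponential decay in $l_{1j}\geq j$. The proof is essentially a routine adaptation of Lemma \ref{g5} with the restriction "$0\leq h\leq l_{1j}-1$, $\sigma\in\Omega_h$" replaced by "$\sigma\in\Lambda_{\Gamma_j}^*$", the only new ingredient being the level containment from the proof of Lemma \ref{g9}.
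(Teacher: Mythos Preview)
Your proposal is correct and follows essentially the same approach as the paper: both decompose $\log(p_0p_\sigma t_\rho)$ (respectively $\log(s_\sigma c_\rho|C|)$), sum out $\rho$ via $\sum_{\rho\in\Gamma_j(\sigma)}t_\rho=1$, and then control the remaining sums over $\sigma\in\Lambda_{\Gamma_j}^*$ using the level containment $\Lambda_{\Gamma_j}^*\subset\bigcup_{h=l_{1j}}^{l_{2j}-1}\Omega_h$. The only difference is cosmetic: for the logarithmic sum the paper invokes the exact level identity $\sum_{\sigma\in\Omega_h}p_\sigma|\log p_\sigma|=h(1-p_0)^{h-1}\sum_{i=1}^N p_i|\log p_i|$ and the convergence of $\sum_h h(1-p_0)^{h-1}$, whereas you use the cruder pointwise bound $|\log p_\sigma|\leq l_{2j}\log\underline{p}^{-1}$ together with (\ref{s16}); both routes give the same conclusion.
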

\begin{proof}
Note that $\sum_{\rho\in\Gamma_j(\sigma)}t_\rho=1$. We have
\begin{eqnarray*}
T_j(2,a):&=&\sum_{\sigma\in\Lambda_{\Gamma_j}^*}\sum_{\rho\in\Gamma_j(\sigma)}p_0p_\sigma t_\rho|\log p_0|=\sum_{\sigma\in\Lambda_{\Gamma_j}^*}p_0p_\sigma |\log p_0|\sum_{\rho\in\Gamma_j(\sigma)}t_\rho\\&\leq&\sum_{h=l_{1j}}^{l_{2j}-1}\sum_{\sigma\in\Omega_h}p_0p_\sigma |\log p_0|\leq p_0|\log p_0|\sum_{h=l_{1j}}^{l_{2j}-1}\sum_{\sigma\in\Omega_h}p_\sigma\\&=&p_0|\log p_0|\sum_{h=l_{1j}}^{l_{2j}-1}(1-p_0)^h\leq(1-p_0)^{l_{1j}}|\log p_0|\leq|\log p_0|.
\end{eqnarray*}
In a similar manner, we deduce
\begin{eqnarray*}
T_j(2,b):&=&\sum_{\sigma\in\Lambda_{\Gamma_j}^*}\sum_{\sigma\in\Omega_h}\sum_{\rho\in\Gamma_j(\sigma)}p_0p_\sigma t_\rho|\log p_\sigma|=\sum_{\sigma\in\Lambda_{\Gamma_j}^*}p_0p_\sigma |\log p_\sigma|\\&\leq&p_0\sum_{h=l_{1j}}^{l_{2j}-1}\sum_{\sigma\in\Omega_h}p_\sigma|\log p_\sigma|=p_0\sum_{h=l_{1j}}^{l_{2j}-1}h(1-p_0)^{h-1}\sum_{i=1}^Np_i|\log p_i|\\&=&p_0\sum_{i=1}^Np_i|\log p_i| \sum_{h=l_{1j}}^{l_{2j}-1}h(1-p_0)^{h-1}.
\end{eqnarray*}
Since $\sum_{h=0}^\infty h(1-p_0)^{h-1}<\infty$, there exists a constant $C_6(1)>0$ such that
$$
|\beta_j|=|T_j(2,a)+T_j(2,b)|=|T_j(2,a)|+|T_j(2,b)|\leq C_6(1).
$$
Analogously, one can show that, $|\widetilde{\beta}_j|\leq C_6(2)$ for some constant $C_6(2)>0$. The lemma follows by setting $C_6:=C_6(1)+C_6(2)$.
\end{proof}
\begin{lemma}\label{g7}
There exists a constant $C_7$ such that $|\chi_j|,|\widetilde{\chi}_j|\leq C_7$, where
\[
\chi_j:=\sum_{\sigma\in\Gamma_j}p_\sigma \log p_\sigma,\;\widetilde{\chi}_j:=\sum_{\sigma\in\Gamma_j}p_\sigma \log s_\sigma.
\]
\end{lemma}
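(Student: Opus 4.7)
My plan is to follow the argument of Lemma \ref{g4} essentially verbatim, with $\Gamma_j$ playing the role that $\Lambda_j$ played in Case I. The key structural input is that $\Gamma_j$ is a finite maximal antichain in $\Omega^*$, which is immediate from its defining inequality $p_{\sigma^-} \geq \underline{q}^j > p_\sigma$. Everything then reduces to two ingredients already present in the earlier argument: a martingale-style mass bound on antichains, and the linear control of $l_{1j}, l_{2j}$ provided by (\ref{s16}).

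The first step is to reproduce the antichain mass estimate
\[
\sum_{\sigma \in \Gamma_j} p_\sigma \leq (1-p_0)^{l_{1j}}.
\]
For this, I would set $\xi(\sigma) := (1-p_0)^{-|\sigma|} p_\sigma$ on $\Omega^*$ and verify, exactly as in (\ref{kk6}), that $\sum_{\tau \in \Gamma(\sigma,h)} \xi(\tau) = \xi(\sigma)$ for all $\sigma \in \Omega^*$ and $h \geq 1$. Extending each $\sigma \in \Gamma_j$ down to length $l_{2j}$ and using maximality gives $\sum_{\sigma \in \Gamma_j} \xi(\sigma) = 1$. Since $(1-p_0)^{-|\sigma|} \geq (1-p_0)^{-l_{1j}}$ for every $\sigma \in \Gamma_j$, the claimed bound follows.

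The second step is the crude pointwise estimate: for every $\sigma \in \Gamma_j$, $|\sigma| \leq l_{2j}$, so $|\log p_\sigma| \leq l_{2j} \log \underline{p}^{-1}$ and $|\log s_\sigma| \leq l_{2j} \log \underline{s}^{-1}$. Combining with the antichain bound,
\[
|\chi_j| \leq l_{2j} (\log \underline{p}^{-1}) (1-p_0)^{l_{1j}}, \qquad |\widetilde{\chi}_j| \leq l_{2j} (\log \underline{s}^{-1}) (1-p_0)^{l_{1j}}.
\]
By (\ref{s16}), $l_{2j}$ is at most linear in $j$ while $l_{1j} \geq j$, so both right-hand sides are of order $j(1-p_0)^j$ and tend to $0$ as $j \to \infty$; in particular, they are uniformly bounded in $j$. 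Taking $C_7$ to be the maximum of the two resulting bounds finishes the argument. I do not anticipate any real obstacle; this lemma is a direct transcription of Lemma \ref{g4} into the Case II setting.
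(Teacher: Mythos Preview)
Your proposal is correct and follows exactly the approach the paper intends: the paper's proof of Lemma~\ref{g7} consists of the single sentence ``This can be shown in the same way as we did for Lemma~\ref{g4},'' and your write-up is precisely that transcription, with $\Gamma_j, l_{1j}, l_{2j}$ and (\ref{s16}) replacing $\Lambda_j, k_{1j}, k_{2j}$ and (\ref{s9}).
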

\begin{proof}
This can be shown in the same way as we did for Lemma \ref{g4}.
\end{proof}
\begin{lemma}\label{g10}
For every finite maximal antichain $\Gamma$ in $\Phi^*$, we have
\begin{eqnarray*}
l_0\sum_{\rho\in\Gamma}t_\rho\log t_\rho=u_0\sum_{\rho\in\Gamma}t_\rho\log c_\rho.
\end{eqnarray*}
\end{lemma}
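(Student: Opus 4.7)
My plan is to prove the identity by showing that the quantity $F(\Gamma) := l_0 \sum_{\rho\in\Gamma} t_\rho \log t_\rho - u_0 \sum_{\rho\in\Gamma} t_\rho \log c_\rho$ is invariant under a natural refinement operation on finite maximal antichains in $\Phi^*$, and then evaluating it on one convenient antichain.

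\textbf{Step 1 (Refinement invariance).} For a finite maximal antichain $\Gamma$ and a word $\rho\in\Gamma$, call the elementary refinement at $\rho$ the replacement of $\rho$ by its children $\{\rho\ast i:1\leq i\leq M\}$; this produces another finite maximal antichain $\Gamma'$. Using $\sum_{i=1}^M t_i=1$ together with the definitions $u_0=\sum_{i=1}^M t_i\log t_i$ and $l_0=\sum_{i=1}^M t_i\log c_i$, a direct calculation gives
\[
\sum_{i=1}^M t_{\rho\ast i}\log t_{\rho\ast i}-t_\rho\log t_\rho = t_\rho u_0, \qquad \sum_{i=1}^M t_{\rho\ast i}\log c_{\rho\ast i}-t_\rho\log c_\rho = t_\rho l_0.
\]
Consequently $F(\Gamma')-F(\Gamma)=l_0\cdot t_\rho u_0-u_0\cdot t_\rho l_0=0$, so $F$ is constant under elementary refinement.

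\textbf{Step 2 (Reaching a reference antichain).} Let $L=L(\Gamma)=\max_{\rho\in\Gamma}|\rho|$. By iteratively applying elementary refinements to each $\rho\in\Gamma$ with $|\rho|<L$ until every element has length $L$, we obtain a finite maximal antichain consisting entirely of words of length $L$; by maximality of $\Gamma$ (every infinite word has a predecessor in $\Gamma$) together with $|\rho|\leq L$ for all $\rho\in\Gamma$, this antichain is exactly $\Phi_L$. Hence $F(\Gamma)=F(\Phi_L)$ by Step 1.

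\textbf{Step 3 (Evaluating on $\Phi_L$).} By expanding $\log t_\rho=\sum_{h=1}^L \log t_{\rho_h}$ and using the product structure $t_\rho=\prod_{h=1}^L t_{\rho_h}$, one obtains
\[
\sum_{\rho\in\Phi_L} t_\rho \log t_\rho = L\,u_0, \qquad \sum_{\rho\in\Phi_L} t_\rho \log c_\rho = L\,l_0,
\]
so $F(\Phi_L)=l_0\cdot L u_0 - u_0\cdot L l_0 = 0$. Combining with Step 2 yields $F(\Gamma)=0$, which is the claimed identity.

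There is no substantive obstacle: the only point requiring a moment of care is verifying in Step 2 that the iterated refinement of $\Gamma$ genuinely exhausts $\Phi_L$ and no other words, which follows cleanly from maximality of $\Gamma$ combined with the bound $|\rho|\leq L$ on its elements.
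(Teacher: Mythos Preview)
Your proof is correct and follows essentially the same strategy as the paper: both reduce to the full level $\Phi_{L(\Gamma)}$ via a refinement-invariance argument and then use $\sum_{\rho\in\Phi_L}t_\rho\log t_\rho=Lu_0$ and $\sum_{\rho\in\Phi_L}t_\rho\log c_\rho=Ll_0$. The only cosmetic difference is that the paper handles the two sums separately---introducing the centered quantity $b(\rho)=t_\rho(\log t_\rho-H_{|\rho|})$ and showing $\sum_{\rho\in\Gamma}b(\rho)=0$, hence $\sum_{\rho\in\Gamma}t_\rho\log t_\rho=u_0\sum_{\rho\in\Gamma}|\rho|\,t_\rho$ (and analogously with $c_\rho$, $l_0$)---whereas you combine them into the single functional $F(\Gamma)$ from the start.
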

\begin{proof}
Set $l(\Gamma):=\min_{\rho\in\Gamma}|\rho|$ and $L(\Gamma):=\max_{\rho\in\Gamma}|\rho|$. We define
\[
H_k:=\sum_{\rho\in\Phi_k}t_\rho\log t_\rho,\;\;b(\rho):=t_\rho(\log t_\rho-H_{|\rho|}).
\]
For $\rho\in\Phi_k$ and $h\geq 1$, we have
\begin{eqnarray*}
\sum_{\omega\in\Gamma(\rho,h)}b(\omega)&=&\sum_{\omega\in\Gamma(\rho,h)}t_\omega(\log t_\omega-H_{k+h})=\sum_{\tau\in\Phi_h}t_{\rho\ast\tau}(\log t_{\rho\ast\tau}-H_{k+h})\\&=&t_\rho\log t_\rho+t_\rho\sum_{\tau\in\Phi_h}t_\tau\log t_\tau-t_\rho H_{k+h}\\&=&t_\rho\log t_\rho+t_\rho(H_{k+h}-H_k)-t_\rho H_{k+h}\\&=&t_\rho(\log t_\rho-H_k)=b(\rho).
\end{eqnarray*}
Applying this to every $\tau\in\Gamma$ with $h=L(\Gamma)-|\rho|$, we deduce
\begin{eqnarray*}
\sum_{\rho\in\Gamma}b(\rho)=\sum_{\rho\in\Gamma}\sum_{\omega\in\Gamma(\rho,L(\Gamma)-|\rho|)}b(\omega)=\sum_{\omega\in\Phi_{L(\Gamma)}}b(\rho)=0.
\end{eqnarray*}
It follows that $\sum_{\rho\in\Phi_k}t_\rho\log t_\rho=\sum_{\rho\in\Phi_k}t_\rho H_{|\rho|}$. Similarly, we have,
\[
\sum_{\rho\in\Phi_k}t_\rho\log c_\rho=\sum_{\rho\in\Phi_k}t_\rho T_{|\rho|},\; {\rm with}\;\; T_{|\rho|}:=\sum_{\rho\in\Phi_{|\rho|}}t_\rho\log c_\rho.
\]
Note that $H_k=ku_0$ and $T_k=kl_0$ for all $k\geq 1$. Thus,
\begin{eqnarray*}
\frac{\sum_{\rho\in\Gamma}t_\rho\log t_\rho}{\sum_{\rho\in\Gamma}t_\rho\log c_\rho}
=\frac{\sum_{\rho\in\Gamma}t_\rho H_{|\rho|}}{\sum_{\rho\in\Gamma}t_\rho T_{|\rho|}}=\frac{u_0}{l_0}.
\end{eqnarray*}
This completes the proof of the lemma.
\end{proof}

\begin{lemma}\label{g8}
There exists constant $C_8$ such that $|\xi_j-d_0|\leq C_8 j^{-1}$.
\end{lemma}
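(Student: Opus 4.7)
The plan is to use Lemmas \ref{g5}, \ref{g6}, \ref{g7} to reduce $\xi_j$ to a ratio whose numerator and denominator are proportional modulo bounded error terms, invoke Lemma \ref{g10} to identify the proportion as $d_0$, and then establish a linear-in-$j$ lower bound for the denominator.

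Setting $A_j(\sigma) := \sum_{\rho\in\Gamma_j(\sigma)} t_\rho \log t_\rho$ and $B_j(\sigma) := \sum_{\rho\in\Gamma_j(\sigma)} t_\rho \log c_\rho$, Lemmas \ref{g5} and \ref{g6}, combined with the partition $\Psi_j = \bigcup_{h=0}^{l_{1j}-1}\Omega_h \cup \Lambda_{\Gamma_j}^*$, give
\[
T_j(1)+T_j(2) = \sum_{\sigma \in \Psi_j} p_0 p_\sigma A_j(\sigma) + e_j + \beta_j, \qquad R_j(1)+R_j(2) = \sum_{\sigma \in \Psi_j} p_0 p_\sigma B_j(\sigma) + \widetilde{e}_j + \widetilde{\beta}_j.
\]
Since each $\Gamma_j(\sigma)$ is a finite maximal antichain in $\Phi^*$, Lemma \ref{g10} forces $A_j(\sigma) = d_0 B_j(\sigma)$. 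Introducing $Y_j := \sum_{\sigma \in \Psi_j} p_0 p_\sigma B_j(\sigma)$, $E_j := e_j + \beta_j + \chi_j$, $F_j := \widetilde{e}_j + \widetilde{\beta}_j + \widetilde{\chi}_j$, and using $T_j(3) = \chi_j$ and $R_j(3) = \widetilde{\chi}_j$ from Lemma \ref{g7}, I obtain $\xi_j = (d_0 Y_j + E_j)/(Y_j + F_j)$, so
\[
\xi_j - d_0 = \frac{E_j - d_0 F_j}{Y_j + F_j}.
\]
By Lemmas \ref{g5}, \ref{g6}, \ref{g7}, the numerator is uniformly bounded by $(1+d_0)(C_5 + C_6 + C_7) =: K$.

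The main obstacle, and the only remaining task, is the linear lower bound $|Y_j + F_j| = |R_j(1) + R_j(2) + R_j(3)| \geq c j$ for some $c > 0$ and all large $j$. Because $s_\sigma, c_\rho, |C| \leq 1$, each summand entering $R_j(1), R_j(2), R_j(3)$ is non-positive, so it suffices to bound $|R_j(1)|$ from below. I would isolate the $h = 0$, $\sigma = \theta$ piece $p_0 \sum_{\rho \in \Gamma_j(\theta)} t_\rho \log(c_\rho |C|)$ and note $|\log(c_\rho |C|)| \geq |\log c_\rho|$ since $|C| \leq 1$. For $\rho \in \Gamma_j(\theta)$, the defining inequality $t_\rho < \underline{q}^j$ combined with $t_\rho \geq \underline{t}^{|\rho|}$ forces $|\rho| \geq j \log \underline{q}^{-1} / \log \underline{t}^{-1} \geq j$ (using $\underline{q} \leq \underline{t}$). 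With $\overline{c} := \max_{1 \leq i \leq M} c_i < 1$, one has $|\log c_\rho| \geq |\rho| \log \overline{c}^{-1} \geq j \log \overline{c}^{-1}$, and $\sum_{\rho \in \Gamma_j(\theta)} t_\rho = 1$ then yields $|R_j(1)| \geq p_0 j \log \overline{c}^{-1}$. Absorbing the bounded $F_j$ gives $|Y_j + F_j| \geq c j$ for sufficiently large $j$, whence $|\xi_j - d_0| \leq C_8/j$ with $C_8 := K/c$, enlarged if necessary to handle the finitely many small $j$.
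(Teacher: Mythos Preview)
Your proof is correct and follows essentially the same route as the paper: decompose the numerator and denominator of $\xi_j$ via Lemmas \ref{g5}--\ref{g7}, invoke Lemma \ref{g10} on each antichain $\Gamma_j(\sigma)$ to match the main terms in ratio $d_0$, and then establish a linear-in-$j$ lower bound for the denominator by isolating the $\sigma=\theta$ contribution. The only minor difference is in that last step: the paper compares $\sum_{\rho\in\Gamma_j(\theta)} t_\rho\log c_\rho$ to $\sum_{\rho\in\Phi_L} t_\rho\log c_\rho$ (implicitly via the identity in the proof of Lemma \ref{g10}), whereas you bound $|\log c_\rho|\geq |\rho|\log\overline{c}^{-1}$ termwise; both give a linear bound. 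One small redundancy: once you observe that $R_j(1),R_j(2),R_j(3)$ are all non-positive and that $Y_j+F_j=R_j(1)+R_j(2)+R_j(3)$, the bound $|Y_j+F_j|\geq |R_j(1)|\geq p_0 j\log\overline{c}^{-1}$ is immediate---there is no separate ``absorbing $F_j$'' step needed.
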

\begin{proof}
For every $j\geq 1$, we define
\[
h_j:=e_j+\beta_j+\chi_j,\;\widetilde{h}_j:=\widetilde{e}_j+\widetilde{\beta}_j+\widetilde{\chi}_j.
\]
For $B_4:=C_5+C_6+C_7$, by Lemmas \ref{g5}-\ref{g7}, we have $|h_j|,|\widetilde{h}_j|\leq B_4$. Note that
\begin{eqnarray*}
\xi_j=\frac{\sum_{\sigma\in\Psi_j}\sum_{\rho\in\Gamma_j(\sigma)}p_0p_\sigma t_\rho\log t_\rho+h_j}
{\sum_{\sigma\in\Psi_j}\sum_{\rho\in\Gamma_j(\sigma)}p_0p_\sigma t_\rho\log c_\rho+\widetilde{h}_j}.
\end{eqnarray*}
Note that $\Gamma_j(\sigma),\sigma\in\Psi_j$, are maximal antichains in $\Phi^*$. By Lemma \ref{g10}, we have
\begin{eqnarray*}
l_0\sum_{\rho\in\Gamma_j(\sigma)}t_\rho\log t_\rho=u_0\sum_{\rho\in\Gamma_j(\sigma)}t_\rho\log c_\rho.
\end{eqnarray*}
It follows that
\begin{eqnarray*}
\frac{\sum_{\sigma\in\Psi_j}\sum_{\rho\in\Gamma_j(\sigma)}p_0p_\sigma t_\rho\log t_\rho}
{\sum_{\sigma\in\Psi_j}\sum_{\rho\in\Gamma_j(\sigma)}p_0p_\sigma t_\rho\log c_\rho}=\frac{u_0}{l_0}.
\end{eqnarray*}
Recall that $\Gamma_j(\theta)=\{\rho\in\Phi^*:t_{\rho^-}\geq \underline{q}^j>t_\rho\}$ is a finite maximal antichain in $\Phi^*$. Set $L:=\min_{\rho\in\Gamma_j(\theta)}|\rho|$. It is easy to see that $L\geq j\log\underline{q}/\log\underline{t}$. Thus,
\begin{eqnarray*}
\bigg|\sum_{\rho\in\Gamma_j(\theta)}t_\rho\log c_\rho\bigg|\geq\bigg|\sum_{\rho\in\Phi_L}t_\rho\log c_\rho\bigg|=L\sum_{i=1}^N|t_i\log c_i|\geq j |l_0|\frac{\log\underline{q}}{\log\underline{t}}=:B_5 j.
\end{eqnarray*}
where $B_5:=|l_0|\log\underline{q}/\log\underline{t}$. Hence, for all $j>2B_4/B_5$, we have
\begin{eqnarray*}
|\xi_j-d_0|&=&\bigg|\frac{\sum_{\sigma\in\Psi_j}\sum_{\rho\in\Gamma_j(\sigma)}p_0p_\sigma t_\rho\log t_\rho+h_j}
{\sum_{\sigma\in\Psi_j}\sum_{\rho\in\Gamma_j(\sigma)}p_0p_\sigma t_\rho\log c_\rho+\widetilde{h}_j}-\frac{u_0}{l_0}\bigg|\\
&=&\frac{|h_j l_0-\widetilde{h}_j u_0|}{|\sum_{\sigma\in\Psi_j}\sum_{\rho\in\Gamma_j(\sigma)}p_0p_\sigma t_\rho\log c_\rho+\widetilde{h}_j|}\\&\leq&
\frac{B_4|l_0+u_0|}{|\sum_{\sigma\in\Psi_j}\sum_{\rho\in\Gamma_j(\sigma)}p_0p_\sigma t_\rho\log c_\rho+\widetilde{h}_j|}\\&\leq&\frac{B_4|l_0+u_0|}{|\sum_{\rho\in\Gamma_j(\theta)}p_0t_\rho\log c_\rho+B_4|}\leq (2B_4|l_0+u_0|)B_5^{-1} j^{-1}.
\end{eqnarray*}
Hence, the lemma follows by setting $C_8:=(2B_4|l_0+u_0|)B_5^{-1}$.
\end{proof}

\emph{Proof of Theorem \ref{mthm} for ISMs in Case II}

By proposition \ref{g12}, it suffices to show that $\overline{Q}_0^{d_0}(\mu)<\infty$.
For every $\sigma\in\Gamma_j$, let $b_\sigma$ be an arbitrary point in $K_\sigma$; For every $\sigma\in\Psi_j$ and $\rho\in\Gamma_j(\sigma)$, let $b_\rho$ be an arbitrary point in $C_\rho$. Then, for $M_j$ as defined in (\ref{zz6}), the cardinality of the set of all these points $b_\sigma,b_\rho$, is not greater than $M_j$. Thus, we have
\begin{eqnarray*}
\hat{e}_{M_j}(\mu)&\leq&\sum_{\sigma\in\Psi_j}\sum_{\rho\in\Gamma_j(\sigma)}\int_{f_\sigma(C_\rho)}\log d(x,b_\rho)d\mu(x)+\sum_{\sigma\in\Gamma_j}\int_{K_\sigma}\log d(x,b_\sigma)d\mu(x)\\&\leq&\sum_{\sigma\in\Psi_j}\sum_{\rho\in\Gamma_j(\sigma)}p_0p_\sigma t_\rho\log (s_\sigma c_\rho|C|)+\sum_{\sigma\in\Gamma_j}p_\sigma\log s_\sigma\\&=&\frac{1}{\xi_j}\bigg(\sum_{\sigma\in\Psi_j}\sum_{\rho\in\Gamma_j(\sigma)}p_0p_\sigma t_\rho\log (p_0p_\sigma t_\rho)+\sum_{\sigma\in\Gamma_j}p_\sigma\log p_\sigma\bigg)\\&\leq&\frac{1}{\xi_j}\sum_{\sigma\in\Psi_j}\sum_{\rho\in\Gamma_j(\sigma)}p_0p_\sigma t_\rho\log (p_\sigma t_\rho)
\end{eqnarray*}
Thus, by the definitions of $\Gamma_j(\sigma)$ (see (\ref{zz3})), we have
\begin{equation}\label{s10}
\hat{e}_{M_j}(\mu)\leq\frac{1}{\xi_j}\sum_{\sigma\in\Psi_j}\sum_{\rho\in\Gamma_j(\sigma)}p_0p_\sigma t_\rho\log \underline{q}^j\leq\frac{1}{\xi_j}\sum_{h=0}^{l_{1j}-1}\sum_{\sigma\in\Omega_h}\sum_{\rho\in\Gamma_j(\sigma)}p_0p_\sigma t_\rho\log \underline{q}^j.
\end{equation}
Now one can easily see
\begin{eqnarray*}
\sum_{h=0}^{l_{1j}-1}\sum_{\sigma\in\Omega_h}\sum_{\rho\in\Gamma_j(\sigma)}p_0p_\sigma t_\rho=\sum_{h=0}^{l_{1j}-1}\sum_{\sigma\in\Omega_h}p_0p_\sigma
=p_0\sum_{h=0}^{l_{1j}-1}(1-p_0)^h=(1-(1-p_0)^{l_{1j}}).
\end{eqnarray*}
From this and (\ref{s10}), we deduce
\begin{eqnarray*}
\hat{e}_{M_j}(\mu)\leq\frac{1}{\xi_j}(1-(1-p_0)^{l_{1j}})\log \underline{q}^j=\frac{1}{\xi_j}\log \underline{q}^j-\frac{1}{\xi_j}(1-p_0)^{l_{1j}}\log \underline{q}^j.
\end{eqnarray*}
By Lemma \ref{g8}, $\xi_j\to d_0$ as $j\to\infty$. Hence, for large $j$, we have
\begin{eqnarray*}
\hat{e}_{M_j}(\mu)\leq\frac{1}{\xi_j}\log \underline{q}^j-\frac{2j}{d_0}(1-p_0)^{l_{1j}}\log \underline{q}^j.
\end{eqnarray*}
By (\ref{s16}), we have that $l_{1j}\geq j$. So, we have, $(1-p_0)^{l_{1j}}j\to 0$ as $j\to\infty$. Thus, there exists a constant $C_9>0$ such that
$\hat{e}_{M_j}(\mu)\leq\xi_j^{-1}\log \underline{q}^j+C_9$. Using this and Lemma \ref{g9}, we deduce
\begin{eqnarray*}
d_0^{-1}\log M_j+\hat{e}_{M_j}(\mu)&\leq& d_0^{-1}\log M_j+\xi_j^{-1}\log \underline{q}^j+C_9\\&\leq&d_0^{-1}\log \underline{q}^{-j}+d_0^{-1}\log N_1+\xi_j^{-1}\log \underline{q}^j+C_9\\&=&(d_0^{-1}-\xi_j^{-1})\log \underline{q}^{-j}+C_9+d_0^{-1}\log N_1.
\end{eqnarray*}
So by Lemma \ref{g8}, we get $d_0^{-1}\log M_j+\hat{e}_{M_j}(\mu)<\infty$. Thus by Lemmas \ref{g9}, \ref{interim}, we conclude that $\overline{Q}_0^{d_0}(\mu)<\infty$. This completes the proof of the theorem.


\begin{thebibliography}{10}

\bibitem{Bar:88} M.F. Barnsley,
 Fractals everywhere. Academic Press, New York, London, 1988

\bibitem{Fal:97} K.J. Falconer,
Techniques in fractal geometry, John Wiley \& Sons, 1997.

\bibitem{Gr:95}S. Graf, On Bandt's tangential distribution for self-similar measures.
Monatsh. Math. 120 (1995) 223-246.

\bibitem{GL:00} S. Graf and H. Luschgy,
Foundations of quantization for probability distributons, in: Lecture Notes in Math., vol. 1730, Springer, Berlin, 2000.


\bibitem{GL:01}S. Graf and H. Luschgy, Asymptotics of the quantization error for
self-similar probabilities, Real. Anal. Exchange 26 (2001) 795-810.

\bibitem{GL:04}S. Graf and H. Luschgy,
Quantization for probabilitiy measures with respect to the geometric
mean error, Math. Proc. Camb. Phil. Soc. 136 (2004) 687-717.

\bibitem{GN:98}R. Gray and D. Neuhoff,
Quantization, IEEE Trans. Inform. Theory 44 (1998) 2325-2383.

\bibitem{Hut:81}J.E. Hutchinson,
Fractals and self-similarity Indiana Univ. Math. J. 30 (1981)
713-747.

\bibitem{Kr:08} W. Kreitmeier, Optimal quantization for dyadic
homogeneous Cantor distributions. Math. Nachr. 281 (2008), 1307-1327

\bibitem{Las:06}A. Lasota,
A variational principle for fractal dimensions. Nonlinear Anal.
64 (2006) 618-628

\bibitem{Olsen:08} L. Olsen and N. Snigireva, Multifractal spectra of in-homogenous self-similar measures.
Indiana Univ. Math. J. 57 (2008) 1887-1841

\bibitem{PK:01}K. P\"{o}tzelberger, The quantization dimension of distributions,
Math. Proc. Camb. Phil. Soc. 131 (2001) 507-519.

\bibitem{Schief:94}A. Schief, Separation properties for self-similar sets. Proc. Amer. Math. Soc. 122 (1994), 111-115.

\bibitem{zhu:12} S. Zhu,
A note on the quantization for probability measres with respect to the geometric mean error,
Monatsh. Math. 167 (2012), 295-304.

\bibitem{zhu:13}S. Zhu, Asymptotics of the quantization errors for in-homogeneous self-similar measures supported on self-similar sets, arXiv:1407.3096.

\bibitem{zhu:14} S. Zhu, The quantization for in-homogeneous self-similar measures with in-homogeneous open set condition, arXiv:1407.2212.




\end{thebibliography}
\end{document}